\newcommand{\fA}{\mathfrak{A}}
\newcommand{\ddt}{\frac{\text{\normalfont d}}{\text{\normalfont d}t}}
\newcommand{\ddts}{\tfrac{\text{\normalfont d}}{\text{\normalfont d}t}}
\newcounter{i} % this counter is used for macros
\newtoks\striche % this toks is used for macros
\newcommand{\Sc}{\mathcal{S}}
\newcommand{\Rf}{\mathbf{R}}
\newcommand{\N}{{\mathbb{N}}}
\newcommand{\R}{{\mathbb{R}}}
\newcommand{\C}{{\mathbb{C}}}
\newcommand{\X}{\mathcal{X}}
\newcommand{\Y}{\mathcal{Y}}
\newcommand{\U}{\mathcal{U}}
\newcommand{\V}{\mathcal{V}}
\newcommand{\W}{\mathcal{W}}
\newcommand{\Z}{\mathcal{Z}}
\let\Re\relax
\DeclareMathOperator{\Re}{Re}
\DeclareMathOperator{\rk}{rank}
\DeclareMathOperator{\dom}{dom}
\DeclareMathOperator{\im}{im}
\newcommand{\id}{I}
\newcommand{\Lb}{\mathcal{L}_{\mathrm{b}}} % linear bounded operators
\newcommand{\Lp}[1]{\mathrm{L}^{#1}} % Lp spaces
\newcommand{\Wkp}[1]{\mathrm{W}^{#1}} % sobolov spaces
\newcommand{\conC}{\mathrm{C}} % C for space of continuous functions
\newcommand{\adjunsymb}{\ast} % if you want to change the symbol of the adjoint just change this command
\newcommand{\adjun}[1][1]{%
  \setcounter{i}{1}%
  \striche={\adjunsymb}%
  \loop%
  \ifnum\value{i}<#1%
  \striche=\expandafter{\the\expandafter\striche\adjunsymb}%
  \stepcounter{i}%
  \repeat%
  ^{\the\striche}%
}
\newcommand{\bvek}[2]
{
   \begin{bmatrix}
      #1\\
      #2
   \end{bmatrix}
}
\newcommand{\sbvek}[2]{{\left[\begin{smallmatrix}#1\\#2\end{smallmatrix}\right]}}
\newcommand{\spvek}[2]{\left(\begin{smallmatrix}#1\\#2\end{smallmatrix}\right)}
\newcommand{\sbmat}[4]{\left[\begin{smallmatrix}#1 & #2\\#3 & #4\end{smallmatrix}\right]}
\DeclarePairedDelimiter{\norm}{\lVert}{\rVert}
\DeclarePairedDelimiterX{\setdef}[2]{\{}{\}}{#1\,\delimsize\vert\,\mathopen{} #2}
\DeclarePairedDelimiterX{\scprod}[2]{\langle}{\rangle}{#1,#2}
\DeclarePairedDelimiterX{\dualprod}[2]{\langle}{\rangle}{#1,#2}
\DeclarePairedDelimiterX{\sdprod}[2]{\llangle}{\rrangle}{#1,#2} % stokes dirac structure product
\newcommand{\dd}{\mathrm{d}} \newcommand{\dx}[1][x]{\mathop{\dd#1}}
\theoremstyle{thmstyleone}%
\newtheorem{theorem}{Theorem}[section]%
\newtheorem{proposition}[theorem]{Proposition}%
\newtheorem{lemma}[theorem]{Lemma}%
\theoremstyle{thmstyletwo}%
\newtheorem{remark}[theorem]{Remark}%
\newcommand{\smalldiamond}{\raisebox{0.15ex}{\scalebox{0.8}{$\lozenge$}}}
\newcommand{\inlineqed}[1]{%
  \leavevmode\unskip\penalty9999\hskip0pt\hbox{}\nobreak\hfill
  \hbox{#1}%
}
\let\oldremark\remark
\let\endoldremark\endremark
\renewenvironment{remark}{%
  \oldremark%
}{%
  \inlineqed{\smalldiamond}\endoldremark%
}
\theoremstyle{thmstylethree}%
\newtheorem{definition}[theorem]{Definition}%
\begin{document}

\title[Infinite-dimensional dissipation inequality]{The infinite-dimensional dissipation inequality}

%\author{\fnm{Murad} \sur{Babayev}}\email{murad.babayev@tu-ilmenau.de}
\author{\fnm{Timo} \sur{Reis}}
\email{timo.reis@tu-ilmenau.de}

\affil{\orgdiv{Institut f\"ur Mathematik}, \orgname{Technische Universit\"at Ilmenau}, \orgaddress{\street{Weimarer Str.\ 25}, \city{Ilmenau}, \postcode{98693 }, \country{Germany}}}

\abstract{We study the dissipativity of linear infinite-dimensional systems with respect to a prescribed quadratic supply rate functional. We characterize this property via an operator inequality that also yields the system’s dissipation rate. We also derive implications for the linear–quadratic optimal control problem on the nonnegative real half-line.
}

\keywords{dissipation inequality, infinite-dimensional systems, operator inequality, supply rate, dissipation rate, linear-quadratic optimal control
}

\pacs[MSC Classification]{93B28,37L05,49N10}

\maketitle
\pagestyle{plain}

\section{Introduction}

A finite-dimensional linear system
\begin{equation}
\dot{x}(t)=Ax(t)+Bu(t),\quad
y(t)=Cx(t)+Du(t),
\label{eq:sys_fin}
\end{equation}
$A\in\C^{n\times n}$, $B\in\C^{n\times m}$, $C\in\C^{p\times n}$, $D\in\C^{p\times m}$, is said to be {\em dissipative with respect to the supply rate functional}
\begin{equation}
s:\C^{m+p}\times \R, \quad \spvek{y}{u}\mapsto \spvek{y}{u}\adjun\sbmat{Q}{S}{S\adjun}{R}\spvek{y}{u},\label{eq:supply_fin}
\end{equation}
$Q=Q\adjun\in\C^{p\times p}$, $S\in\C^{p\times m}$, $R=R\adjun\in\C^{m\times m}$, {\em and storage function} $\Sc:\C^n\to\R$, such that the {\em dissipation inequality}
\begin{equation}\Sc(x(T))-\Sc(x(0))+ \int_0^Ts\bigl(y(t),u(t)\bigr)\dx[t]\geq0\label{eq:dissineq_fin}\end{equation}
is fulfilled for $T>0$ and all solutions of \eqref{eq:sys_fin} on $[0,T]$. As \eqref{eq:dissineq_fin} can be interpreted as an energy balance, $\Sc$ is commonly referred to as the {\em storage function}.
This concept traces back to {\sc Willems'} seminal works \cite{Will71a,Will72a,Will72b}, where dissipativity has proven to be a crucial concept with applications spanning from stability analysis of feedback systems and linear-quadratic optimal control to physical considerations such as energy transfer and balances.

If $\Sc$ is quadratic, there exists some Hermitian $P\in\C^{n\times n}$, such that $\Sc(x)=x\adjun Px$ for all $x\in\C^n$. Now, by dividing \eqref{eq:dissineq_fin} with $T$, taking the limit $T\to0$, and using \eqref{eq:sys_fin}, we obtain that
\begin{equation}
\begin{bmatrix}A\adjun P+PA+C\adjun QC&PB+C\adjun QD+C\adjun S\\B\adjun P+D\adjun QC+S\adjun C&D\adjun QD+D\adjun S+S\adjun D+R\end{bmatrix}\geq0,\label{eq:kyp}
\end{equation}
where the inequality sign refers positive semi-definiteness of the matrix on the left hand side. On the other hand, by using basic calculus, we see that \eqref{eq:kyp} implies that the solutions of \eqref{eq:sys_fin} fulfill
the dissipation inequality \eqref{eq:dissineq_fin} with $\Sc(x)=x\adjun Px$.

% \begin{multline}
% x(T)\adjun Px(T)-x(0)\adjun Px(0)\\=\int_0^T(A{x}(t)+Bu(t))\adjun Px(t)+x(t)\adjun P(A{x}(t)+Bu(t))\dx[t].
% \label{eq:xint_fin}\end{multline}
% This can be utilized to observe that the reverse implication also holds. That is, \eqref{eq:kyp} implies that the dissipation inequality \eqref{eq:dissineq_fin} holds with the supply function $s(\cdot,\cdot)$ as specified in \eqref{eq:supply_fin}, and storage function as in $\Sc(x)=x\adjun Px$.
 For $q$ being the rank of the matrix on the left hand side of \eqref{eq:kyp}, a~rank-revealing factorization of the left hand side of \eqref{eq:kyp} gives rise to the existence of $K\in \C^{q\times n}$, $L\in \C^{q\times m}$ with
\begin{equation}
\begin{bmatrix}A\adjun P+PA+C\adjun QC&PB+C\adjun QD+C\adjun S\\B\adjun P+D\adjun QC+S\adjun C&D\adjun QD+D\adjun S+S\adjun D+R\end{bmatrix}=\bvek{K\adjun}{L\adjun}\begin{bmatrix}K&L\end{bmatrix},\label{eq:lure}
\end{equation}
which yields that the solutions of \eqref{eq:sys_fin} further fulfill
\begin{equation*}\Sc(x(T))-\Sc(x(0))+ \int_0^Ts\bigl(y(t),u(t)\bigr)\dx[t]
=\int_0^T\norm{Kx(t)+Lu(t)}^2\dx[t],%\label{eq:disseq_fin}
\end{equation*}
whence $\norm{Kx(t)+Lu(t)}^2$ is called {\em dissipation rate at $t$}.
Matrix equations of the form \eqref{eq:lure}, known as \emph{Lur'e equations}, have been studied from linear-algebraic and numerical perspectives in \cite{Reis2011Lure,PoloniReis2012Deflation}.

In this article, we extend these considerations to a~broad class of infinite-dimensional systems. Here, we employ the system node approach developed {\sc Staffans} in \cite{Staf05} and other previous publications by the same author. Although it may appear daunting at first glance, system nodes are, on one hand, closely related to the representation \eqref{eq:sys_fin} of finite-dimensional linear systems. On the other hand, they enable the incorporation of partial differential equations with boundary control. Formulating concrete examples of partial differential equations with input and output is often straightforward in the system node representation, which is represented by
\begin{equation}\label{eq:ODEnode}
    \spvek{\dot{x}(t)}{y(t)}= \sbvek{A\&B\\[-1mm]}{C\&D}\,\spvek{x(t)}{u(t)},
\end{equation}
where, for some Hilbert spaces $\X$, $\U$, and $\Y$,
\begin{equation}
\begin{aligned}
A\&B:\;\;&\X\times \U\supset\dom(A\&B) \to \X,\\
C\&D:\;\;&\X\times \U\supset\dom(C\&D)\to{\Y}
\end{aligned}\label{eq:sysnodeop}
\end{equation}
 are linear operators with properties explained in the subsequent section. In contrast to the finite-dimensional case, the operators $A\&B$ and $C\&D$ do not split into separate components corresponding to the state and input, driven by the necessity of incorporating boundary control for partial differential equations. For self-adjoint and bounded $Q:\Y\to \Y$, $R:\U\to \U$, and some bounded $S:\U\to \Y$, we consider supply rate functionals of the form
\begin{equation}
s:\Y\times \U\to \R, \quad \spvek{y}{u}\mapsto \scprod*{\spvek{y}{u}}{\sbmat{Q}{S}{S\adjun}{R}\spvek{y}{u}}_{\Y\times\U}.\label{eq:supply_inf}
\end{equation}
In our analysis, we permit potentially unbounded quadratic storage functions. Specifically, we examine those represented by closed, densely defined, and semi-bounded quadratic forms $\Sc$ (definitions for these terms will be provided in this article for the sake of self-contained understanding). Dissipativity now means that the solutions of \eqref{eq:ODEnode} (solution concepts for system nodes will be introduced later on) satisfy \eqref{eq:dissineq_fin}. By using
Kato's First Representation Theorem \cite[Chap.~VI, Thm.\ 2.1]{Kato80}, there exists some self-adjoint (possibly unbounded) operator $P:\X\supset\dom(P)\to\X$, such that, loosely speaking, $\Sc(x)=\scprod{x}{Px}_\X$ for all $x\in\dom(P)$. We show that the operator inequality that generalizes \eqref{eq:kyp} now reads
\begin{multline}\label{eq:kyp_inf}
\forall \, \spvek{x}u\in\W:\quad 2\Re\scprod*{\widetilde{P}x}{A\&B \spvek{x}u}_{\V,\V\adjun}\\+\scprod*{\spvek{C\&D\spvek{x}u}u}{\sbmat{Q}{S}{S\adjun}{R}\spvek{C\&D\spvek{x}u}u}_{\Y\times\U}\geq 0,
\end{multline}
where $\V\subset \X$ and $\W\subset \dom(A\&B)$ are dense subspaces (to be specified in due course), and $\widetilde{P}:\V\to\V\adjun$ is a~certain extension of $P$. Given  that we will allow for unbounded storage functions, it is crucial to carefully examine the involved spaces and have a clear understanding of the inner and duality products being employed.

In the finite-dimensional setting it is well known that, beyond physical energy balance laws, dissipation inequalities play a central role in optimal control: one seeks to minimize
\[
\int_0^\infty s\bigl(y(t),u(t)\bigr)\dx[t]
\]
(where $s$ is as in \eqref{eq:supply_fin}) subject to \eqref{eq:sys_fin} with $x(0)=x_0\in\R^n$, possibly together with a terminal constraint. In this case \cite{Will71a} shows that the \emph{value function}, the map assigning to the initial state $x_0$ the optimal cost, is a storage function. We prove that the same relation holds in infinite dimensions. Unlike \cite{Will71a} we impose no controllability assumptions. Instead, in our optimal control setting we restrict attention to nonnegative supply rate functionals, which allows us to dispense with the terminal constraints used in \cite{Will71a}. We also assume the existence of a stabilizing state feedback in an appropriate sense.

This work is organized as follows: After introducing the notation and presenting some fundamentals on quadratic forms, we present the required basics of system nodes in Section~\ref{sec:sysnodes}. The main part of this article is thereafter presented in Section~\ref{sec:main}, where we introduce the dissipation inequality for infinite-dimensional systems. We show the equivalence between the dissipation inequality and an operator inequality of type \eqref{eq:kyp_inf}. 
We next present, in Section~\ref{sec:bndstor}, further properties that hold when the storage function is bounded. Thereafter, in Section~\ref{sec:lqr} we investigate connections to linear-quadratic optimal control problems. In particular, we show that, under the nonnegativity assumption on \eqref{eq:supply_inf}, the value function of the optimal control problem minimizing $\int_0^\infty s\bigl(y(t),u(t)\bigr),\dx[t]$ is a storage function. Finally, in Section~\ref{sec:ex} we illustrate our theory by means of two examples of partial differential equations.

To place our approach in context, we first provide a brief survey of the relevant literature. As already noted, the finite-dimensional theory goes back to {\sc Willems} \cite{Will71a,Will72a,Will72b}. Dissipation inequalities for the passive infinite-dimensional case (i.e., in the notation of \eqref{eq:supply_inf}, with $\U=\Y$, $S$ the identity, and $Q=R=0$) have been studied in \cite{Pand99,Staf02,ArSt07}.
 In particular, \cite{Staf02,ArSt07}.
 make it clear that allowing for unbounded storage functions is natural. We also note that linear–quadratic optimal control problems in the special case 
(in our notation, $Q=I$ and $R=I$) have been studied in 
\cite{OpmeerStaffans2014,OpmeerStaffans2019,Opmeer2014MTNS}, together with some remarks on more general cost functionals in 
\cite{Opmeer2014MTNS}. The analysis therein shows that, essentially, the problem 
reduces to finding a bounded, nonnegative solution $P$ of \eqref{eq:kyp_inf}. 
In \cite{Opmeer2014MTNS}, further comparisons are also made with alternative 
approaches to operator equations and inequalities in connection with optimal control, 
such as the method in \cite{Grabowski2017}, which is based on the so-called 
\emph{reciprocal system} and requires the inversion of the main operator $A$.

\subsection*{Notation and preliminaries}

We denote by $\norm{\cdot}_{\X}$ the norm on the complex Hilbert space $\X$, and by $\id_\X$. 
%(or simply $\id$ when clear from context) the identity operator on $\X$.
The symbol $\X\adjun$ stands for the {\em anti-dual} of $\X$, i.e., the space of all bounded, additive, and conjugate homogeneous functionals on $\X$. Therefore, the canonical duality product $\langle\cdot,\cdot\rangle_{\X\adjun,\X}$ (as well as the inner product $\langle\cdot,\cdot\rangle_{\X}$ in $\X$) is a symmetric sesquilinear form in the sense of \cite[Chap.~VI]{Kato80}, that is, it is linear in the first argument, and swapping the arguments results in conjugation. %If the spaces are clear from context, we may skip the subindices. Additionally, 
Unless stated otherwise, a Hilbert space is canonically identified with its anti-dual.

\subsubsection*{Operators and forms}
We denote by $\Lb(\X,\Y)$ the space of all bounded linear operators from the Hilbert space $\X$ to the Hilbert space $\Y$. As usual, we abbreviate $\Lb(\X)\coloneqq \Lb(\X,\X)$. The domain $\dom (A)$ of a possibly unbounded linear operator $A\colon \X\supset\dom (A) \to \Y$ is equipped with the graph norm \[\norm{x}_{\dom (A)}\coloneqq \big(\norm{x}_{\X}^2 + \norm{Ax}_{\Y}^2\big)^{1/2}.\]
A~subspace of $\dom(A)$ is called a {\em core of $A$}, if it is dense in $\dom(A)$ with respect to the norm $\norm{\cdot}_{\dom(A)}$.

The adjoint of a~densely defined linear operator $A\colon \X\supset\dom (A)\to \Y$ is $A\adjun \colon  \Y\supset\dom (A\adjun)\to \X$ with
\[
 \dom (A\adjun) = \setdef{y\in \Y}
 {\exists\, z\in \X\text{ s.t.\ }\forall\,x\in\dom (A):\scprod{y}{Ax}_{\Y} = \scprod{z}{x}_{\X}}.
\]
The vector $z\in \X$ in the above set is uniquely determined by $y\in\dom (A\adjun)$, and we set $A\adjun y=z$. %The resolvent set of $A\colon \X\supset\dom (A)\to \X$ is denoted by $\rho(A)$.

% The restriction of an operator $A:\X\to\Y$ to the subspace $\V\subset\X$ is denoted by $\left.A\right|_{\V}$.
% The following operator-theoretic lemma will be further used in this article.
% \begin{lemma}
%     Let $\X,\Y$ be Hilbert spaces, and $A\colon \X\supset\dom (A) \to \Y$ be a~closed operator. Further, let $\V\subset\X$ be a~subspace that is a~Hilbert space endowed with the norm $\norm{\cdot}_{\V}$. Then the restriction
%     $\widetilde{A}:=\left.A\right|_{\dom(\widetilde{A})}$ with 
%     \[\dom(\widetilde{A}):=\setdef{x\in\X}{Ax\in\V}\]
% is a~closed operator from $\dom(\widetilde{A})\subset\X$ to $\V$.    
% \end{lemma}
% \begin{proof}
% Assume that $(x_n)$ is a~sequence converging to $x\in\X$, such that $(\widetilde{A}x_n)$ converges in the topology of $\V$ to $v\in\V$. Since $\V$ is a~Hilbert space with $\V\subset\X$, we obtain that  
% $(\widetilde{A}x_n)=(Ax_n)$ converges in the topology of $\X$ to $v\in\V\subset\X$. Closedness of $A$ gives $x\in \dom(A)$ with $Ax=v$. Since the latter means that $x\in \dom(\widetilde{A})$ with $\widetilde{A}x=v$, the result is proven.
% \end{proof}
Essential for the definition of storage functions are {\em real quadratic forms}. That is, for a~subspace $\dom(\Sc)$, a~mapping $\Sc:\X\supset\dom(\Sc)\to\R$, such that $h:\dom(h)\times \dom(h)\to\C$ with
\begin{equation}
    h(x,z)=\tfrac14\big(\Sc(x+z)-\Sc(x-z)+\imath \Sc(x+\imath z)-\imath \Sc(x-\imath z)\big)\label{eq:hsesq}
\end{equation}
is a~sesquilinear form. A~real quadratic form is called {\em densely defined}, if $\dom(\Sc)$ is dense in $\X$, and
{\em bounded}, if
\[\exists\,c>0:\quad |\Sc(x)|\leq \norm{x}_\X^2\;\forall x\in\X,\]
and {\em semi-bounded}
exists some $\lambda\in\R_{\ge0}$ such that one of \begin{equation}
    x\mapsto(\pm\Sc(x)+\lambda\norm{x}_\X^2)^{1/2}\label{eq:formnorm}
\end{equation}defines a norm on $\dom(\Sc)$. If, in addition, $\dom(\Sc)$ is complete with respect to the aforementioned norm, then $\Sc$ is called {\em closed}. A~{\em core of $\Sc$} is a~subspace of $\dom(\Sc)$, which is dense in $\dom(h)$ with respect to the aforementioned norm in $\dom(\Sc)$.
For a~closed, densely defined and semi-bounded real quadratic form $\Sc:\X\supset\dom(\Sc)\to\R$, Kato's First Representation Theorem \cite[Chap.~VI, Thm.\ 2.1]{Kato80} yields that there exists some self-adjoint $P:\X\supset\dom(P)\to\X$, such that $\dom(P)$ is a~core of $\dom(\Sc)$, and
\begin{align*}
h(x,z)&=\scprod{Px}{z}_\X\quad\forall\ x\in\dom(h),\,z\in \dom(P),\\
\dom(P)&=\setdef{z\in \X}{\exists\, w\in \X\text{ s.t.\ }h(x,z)=\scprod{x}{w}_\X}.\end{align*}
The operator $P$ is uniquely determined by $h$, and we call $P$ {\em the operator induced by $h$}. %On the other hand, for any self-adjoint $P:\X\supset\dom(P)\to\X$ with the property that there exists some  $\lambda\in\R$, such that $\lambda \id_\X+P$ is either a~positive or a~negative operator,
%then there exists a~unique closed and symmetric sesquilinear form $h$ that induces $P$ \cite[Sec.\ VI.1, Cor.\ 1.28]{Kato80}.
The fact that $\dom(P)$ is a~core of $\dom(\Sc)$ implies that $P$ extends uniquely to an operator $\widetilde{P}\in\Lb(\dom(\Sc),\dom(\Sc)\adjun)$ via
\begin{equation}
    h(x,z)=\scprod{x}{\widetilde{P}z}_{\dom(\Sc),\dom(\Sc)\adjun}\quad\forall\ x,z\in\dom(\Sc).\label{eq:Pdual}
\end{equation}
%For sake of brevity, this extension is also denoted by $P$.

\subsubsection*{Function spaces}
We adopt the notation from the widely-used textbook \cite{AdamFour03} for Lebesgue and Sobolev spaces. In the case of function spaces with values in a Hilbert space $\X$, we include the additional marker ";$\X$" after specifying the domain. For example, the Lebesgue space of $p$-integrable $\X$-valued functions on the domain $\Omega\subset\R^d$ is denoted as $\Lp{p}(\Omega;\X)$. In this article, the integration of $\X$-valued functions is consistently interpreted in the Bochner sense, as detailed in \cite{Dies77}.
Since functions and elements of the target space $\X$ might be confused, we use “$(\cdot)$” to mark functions (e.g., $x(\cdot)\in \Lp{p}(\Omega;\X)$), whereas elements of $\X$ are e.g.\ denoted by $x$.

% Further, for $T>0$, we consider the space
% \begin{align}
%     \Hk{2}_{0r}([0,T];\X) &:= \setdef{v\in \Hk{2}([0,T];\X)}{v(T)= \tfrac{\mathrm{d}}{\mathrm{d}t}v(T) = 0},\label{eq:H2r}
% \end{align}
% which is, by boundedness of the mappings $f\mapsto f(0)$, $f\mapsto \dot{f}(0)$, a~Hilbert space endowed with the usual norm in $\Hk{2}([0,T];\X)$.
% Its dual space with respect to the pivot space $\Lp{2}([0,T];\X)$ is denoted by
% \begin{align}
%     \Hk{-2}_{0l}([0,T];\X) &:= \Hk{2}_{0r}([0,T];\X)\adjun,\label{eq:H-2l}
%     %\\
%     %\Hk{-2}_{0r}([0,T];X) &:= \Hk{2}_{0l}([0,T];X)\adjun.\label{eq:H-2r}
% \end{align}
% The {\em left second derivative} $\big(\tfrac{\mathrm{d}^2}{\mathrm{d}t^2}\big)_l:\Lp{2}([0,T];\X)\to \Hk{-2}_{0l}([0,T];\X)$
% is defined via
% \begin{align}\label{eq:l2nder}
% \begin{split}
%     \langle \big(\tfrac{\mathrm{d}^2}{\mathrm{d}t^2}\big)_l v, w\rangle_{\Hk{-2}_{0l}([0,T];\X),\Hk{2}_{0r}([0,T];\X)}:=\langle v, &\tfrac{\mathrm{d}^2}{\mathrm{d}t^2}w\rangle_{\Lp{2}([0,T];\X)}\\ &\forall\,v\in \Lp{2}([0,T];\X),\,w\in \Hk{2}_{0r}([0,T];\X).
% \end{split}
% \end{align}

\section{System nodes}\label{sec:sysnodes}

We present some groundwork for systems of the form \eqref{eq:ODEnode} with operators as in \eqref{eq:sysnodeop}.
The autonomous dynamics (i.e., those with a trivial input $u=0$) of \eqref{eq:ODEnode} are determined by the so-called \emph{main operator}
$A\colon \X\supset\dom(A)\to \X$
with $\dom(A) \coloneqq \setdef{x\in\X}{\spvek x0\in\dom(A\&B)}$ and $Ax \coloneqq A\&B\spvek x0$ for all $x\in\dom(A)$.

The concept of system nodes imposes inherent requirements on the operators $A\&B$ and $C\&D$. These are designed to guarantee favorable properties and an appropriate solution concept for the dynamics described by \eqref{eq:ODEnode}.
\begin{definition}[System node]\label{def:sysnode}
  A {\em system node} on the triple $(\X,\U,\Y)$ of Hilbert spaces is a~linear operator $\sbvek{A\&B}{C\&D}$ with $A\&B:\X\times\U\supset\dom(A\&B)\to \X$, $C\&D:\X\times\U\supset\dom(C\&D)\to \Y$ satisfying the following conditions:
  %$\left[\begin{smallmatrix}\X\\ U\end{smallmatrix}\right] \to \left[\begin{smallmatrix} \X\\\Y\end{smallmatrix}\right]$ that is decomposed into $S=\spvek{A\&B}{C\&D}$ with $A\&B = P_\X S:\dom S \to \X$ and $C\&D = P_\Y S:\dom S \to \Y$. Further, we set $Ax=A\&B\spvek{x}{0}$ and $\dom(A)=\{x\in \X : \spvek{x}{0}\in \dom S\}$ and demand the following conditions:
  \begin{enumerate}[label=(\alph{*})]
    \item $A\&B$ is closed.
    \item $C\&D\in \Lb(\dom (A\&B),\Y)$.
    \item\label{def:sysnode3} For all $u\in \U$, there exists some $x\in \X$ with $\spvek{x}{u}\in \dom(A\&B)$.
    \item The main operator $A$ is the generator of a~strongly continuous semigroup $\fA(\cdot)\colon
      \R_{\ge 0}\to \Lb(\X)$ on $\X$.
  \end{enumerate}
\end{definition}
We now expound upon the solution concepts for \eqref{eq:ODEnode}.
\begin{definition}[Classical/generalized trajectories]\label{def:traj}
Let $\sbvek{A\& B}{C\& D}$ be a~system node  on $(\X,\U,\Y)$, and let $T\in\R_{>0}$.\\
A {\em classical trajectory} of \eqref{eq:ODEnode} on $[0,T]$ is a triple
\[
(x(\cdot),u(\cdot),y(\cdot))\,\in\,\conC^{1}([0,T];\X)\times \conC([0,T];\U)\times \conC([0,T];\Y),
\]
which for all $t\in[0,T]$ satisfies \eqref{eq:ODEnode}.\\
A {\em generalized trajectory} of \eqref{eq:ODEnode} on $[0,T]$ is a~limit
 of classical trajectories of \eqref{eq:ODEnode} on $[0,T]$ in the topology of $\conC([0,T];\X)\times \Lp{2}([0,T];\U)\times \Lp{2}([0,T];\Y)$.\\
Further, we call 
\[
(x(\cdot),u(\cdot),y(\cdot))\,\in\,\conC^{1}(\R_{\ge0};\X)\times \conC(\R_{\ge0};\U)\times \conC(\R_{\ge0};\Y)
\]
a~classical/generalized trajectory of \eqref{eq:ODEnode} on $\R_{\ge0}$, if, for all $T\in\R_{>0}$, the restriction of $(x(\cdot),u(\cdot),y(\cdot))$ to $[0,T]$ is a~classical/generalized trajectory of \eqref{eq:ODEnode} on $[0,T]$.
\end{definition}
%If $\sbvek{A\&B}{C\&D}$ is a~system node on $(\X,\U,\Y)$, then, using the canonical isomorphism $\X\cong\X\times\{0\}$, we observe that $A \& B$ is a system node on $(\X,\U,{0})$. By leveraging this, Definition~\ref{def:sysnode} also encompasses the statement ``$(x,u)$ is a classical/generalized trajectory of the control equation \eqref{eq:ODEnode0}
%on $[0,T]$''.

A~first result on the existence of solutions is now presented:
\begin{proposition}[{\cite[Thm.~4.3.9]{Staf05}}]\label{prop:solex}
Let $\sbvek{A\& B}{C\& D}$ be a~system node  on $(\X,\U,\Y)$, $T\in\R_{>0}$, and
\[x_0\in \X,\quad u(\cdot)\in \Wkp{2,1}([0,t];\U)\;\text{ s.t.\ }\spvek{x_0}{u(0)}\in \dom (A\&B).\]
Then there exist
$x(\cdot)\,\in\,\conC^{1}([0,t];\X)$ with $x(0)=x_0$, and $y(\cdot)\in \conC([0,t];\Y)$, such that $(x(\cdot),u(\cdot),y(\cdot))$ is a~classical trajectory
of \eqref{eq:ODEnode} on $[0,T]$.
\end{proposition}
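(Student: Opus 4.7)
The plan is to reduce the problem to a classical inhomogeneous abstract Cauchy problem governed by the semigroup $\fA(\cdot)$, via the standard extrapolation-space machinery underlying the system node concept. First, I would use closedness of $A\&B$ together with condition~\ref{def:sysnode3} of Definition~\ref{def:sysnode} to produce the canonical decomposition of $A\&B$. Concretely, fix any $\beta\in\rho(A)$, and let $\X_{-1}$ denote the extrapolation space, i.e., the completion of $\X$ with respect to the norm $\norm{(\beta-A)^{-1}\cdot}_{\X}$. Then $A$ extends continuously to $A_{-1}\in\Lb(\X,\X_{-1})$, and one shows there exists $B\in\Lb(\U,\X_{-1})$ such that
\[
A\&B\,\spvek{x}{u}=A_{-1}x+Bu \quad \text{for all } \spvek{x}{u}\in\dom(A\&B),
\]
with $\dom(A\&B)=\{\spvek{x}{u}\in\X\times\U \mid A_{-1}x+Bu\in\X\}$.

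Next I would perform the standard lifting. Set $B_\beta\coloneqq(\beta-A_{-1})^{-1}B\in\Lb(\U,\X)$ and $w(t)\coloneqq x(t)-B_\beta u(t)$. Using the identity $A_{-1}B_\beta=\beta B_\beta - B$, a direct computation shows that the system-node equation $\dot x(t)=A_{-1}x(t)+Bu(t)$ is equivalent to
\[
\dot w(t)=Aw(t)+f(t),\qquad f(t)\coloneqq\beta B_\beta u(t)-B_\beta\dot u(t),
\]
with $w(0)=x_0-B_\beta u(0)$. The compatibility condition $\spvek{x_0}{u(0)}\in\dom(A\&B)$ translates exactly to $w(0)\in\dom(A)$, and from $u\in\Wkp{2,1}([0,T];\U)$ we obtain $f\in\Wkp{1,1}([0,T];\X)$. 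Standard $C_0$-semigroup theory for inhomogeneous Cauchy problems then yields a unique $w\in\conC^1([0,T];\X)\cap\conC([0,T];\dom(A))$ satisfying this equation, given by $w(t)=\fA(t)w(0)+\int_0^t\fA(t-s)f(s)\,\dx[s]$.

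Setting $x(t)\coloneqq w(t)+B_\beta u(t)$ and $y(t)\coloneqq C\&D\,\spvek{x(t)}{u(t)}$, I obtain $x\in\conC^1([0,T];\X)$ with $x(0)=x_0$ and $\dot x(t)=A_{-1}x(t)+Bu(t)\in\X$; in particular $\spvek{x(t)}{u(t)}\in\dom(A\&B)$ for every $t\in[0,T]$. Continuity of $y$ then follows from $C\&D\in\Lb(\dom(A\&B),\Y)$ together with continuity of $t\mapsto\spvek{x(t)}{u(t)}$ in the graph norm of $A\&B$, which holds since $x\in\conC([0,T];\X)$, $u\in\conC([0,T];\U)$ (by Sobolev embedding), and $A\&B\,\spvek{x(\cdot)}{u(\cdot)}=\dot x(\cdot)\in\conC([0,T];\X)$.

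The hardest step I foresee is the initial decomposition of $A\&B$ into the pair $(A_{-1},B)$: it relies on a careful interplay between closedness of $A\&B$ and the surjectivity condition~\ref{def:sysnode3}, realized through the extrapolation-space construction, and it is the step where the system-node structure is genuinely used. Once this decomposition is in place, the remainder is routine: the well-established inhomogeneous $C_0$-Cauchy theory with $\Wkp{1,1}$-regular forcing delivers $x$, and boundedness of $C\&D$ in the graph norm delivers $y$.
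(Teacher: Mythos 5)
Your proposal is correct, but note that the paper does not prove this proposition at all: it is quoted verbatim from \cite[Thm.~4.3.9]{Staf05}, so there is no in-paper argument to compare against. What you have written is a legitimate self-contained proof along the standard lines, and it is close in spirit to the argument in Staffans' book. The step you identify as hardest --- the decomposition $A\&B\spvek{x}{u}=A_{-1}x+Bu$ with $\dom(A\&B)=\setdef{\spvek xu\in\X\times\U}{A_{-1}x+Bu\in\X}$ --- is in fact already recorded in the paper's appendix as \eqref{eq:ABdom}, citing \cite[Def.~4.7.2 \& Lem.~4.7.3]{Staf05}, so you may simply invoke it rather than re-derive it. The remaining steps check out: the identity $A_{-1}B_\beta=\beta B_\beta-B$ gives $\dot w=Aw+f$ with $f=\beta B_\beta u-B_\beta\dot u\in\Wkp{1,1}([0,T];\X)$; the compatibility condition $\spvek{x_0}{u(0)}\in\dom(A\&B)$ is indeed equivalent to $w(0)\in\dom(A)$ because $\beta B_\beta u(0)\in\X$; the classical solvability of the inhomogeneous Cauchy problem with $\Wkp{1,1}$ forcing and $\dom(A)$-initial datum yields $w\in\conC^1([0,T];\X)\cap\conC([0,T];\dom(A))$; and continuity of $y$ follows from $C\&D\in\Lb(\dom(A\&B),\Y)$ once one observes that $t\mapsto\spvek{x(t)}{u(t)}$ is continuous in the graph norm (which your argument via $A\&B\spvek{x(\cdot)}{u(\cdot)}=\dot x(\cdot)\in\conC([0,T];\X)$ establishes). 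The only cosmetic remark is that this is precisely why the hypothesis asks for $\Wkp{2,1}$ rather than $\Wkp{1,1}$ regularity of $u$: one derivative is spent in the lifting $w=x-B_\beta u$ and the remaining $\Wkp{1,1}$ regularity of $f$ is what upgrades the mild solution to a classical one.
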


A~special additional property is {\em well-posedness}. That is, for some (and hence any) $T>0$, there exists some $c_T>0$, such that the classical (and thus also the generalized) trajectories of \eqref{eq:ODEnode} on $[0,T]$ fulfill
  \begin{equation}
\label{eq:wp}      
    \norm{x(T)}_{\X} + \norm{y(\cdot)}_{\Lp{2}([0,T];\Y)} \leq
    c_T\big(\norm{x(0)}_{\X}
    + \norm{u(\cdot)}_{\Lp{2}([0,T];\U)}\big).
  \end{equation}
If the constant $c_T$ can be chosen independently of $T$, then we call \eqref{eq:ODEnode} \emph{infinite-time well-posed}. 
In finite dimensions, well-posedness holds automatically, while infinite-time well-posedness typically is tied to stability properties of $A$.

In the infinite-dimensional setting, well-posedness often provides a certain `comfort factor' when developing the theory, although verifying it in concrete models is frequently challenging. On the other hand, it is often unnecessary, and hence not entirely {natural}, for many arguments, especially those based on dissipativity. If one dispenses with well-posedness, however, developing the theory requires additional care.

The operator $A\&B$ can be separated into components associated with the state and the input, in accordance with the conventional framework adopted in numerous studies on infinite-dimensional systems (see, for example, \cite{TucsnakWeiss2009}). To make this separation precise, one introduces state extrapolation spaces. We do not reproduce the general theory here; a full treatment is given in \cite[Sec.~4.7]{Staf05}. However, we summarize some basic properties in the appendix.

For later use, we introduce a~special space associated to a~system node $\sbvek{A\& B}{C\& D}$ on $(\X,\U,\Y)$. Namely, we define
\begin{equation}\label{eq:Vdef}
\V:=\setdef{x\in \X}{\exists\ u\in \U\text{ s.t.\ }\spvek{x}u\in\dom(A\&B)},
\end{equation}
which is a~Hilbert space equipped with the norm
\[\norm{x}_{\V}:=\inf\setdef{\norm{\spvek{x}u}_{\dom(A\&B)}}{\exists\ u\in \U\text{ s.t.\ }\spvek{x}u\in\dom(A\&B)},\]
see \cite[Lem.~4.3.12]{Staf05}. Strictly speaking, this space should carry an index indicating its dependence on $A\&B$. In the present paper there is no risk of confusion, since the symbol $\V$ is used exclusively for this space; we therefore keep the simplified notation. The same convention will be used for the space
\begin{equation}\label{eq:Wdef}
\W:=\setdef{\spvek{x}u\in\dom(A\&B)}{A\&B\spvek{x}u\in \V}.
\end{equation}
It can be seen that $\W$ becomes a~Hilbert space with respect to the norm
\[\norm{\spvek{x}u}_{\W}^2=\norm{\spvek{x}u}_{\X\times\U}^2+\norm{A\&B\spvek{x}u}^2_{\V}.\]
%By ..., it holds that for all $T>0$, $\Rc\cap \V_{A\&B}$ is a~dense subspace of 

\section{The dissipation inequality}\label{sec:main}

Now we are ready to introduce the concept of a dissipation inequality for input-state-output systems defined by system nodes. %Considering Kato's representation theorem (and also the famous Lax-Milgram theorem \cite[Sec.~6.2.1]{Evans10}), we can equivalently, instead of assuming operators $Q=Q\adjun\in\Lb(\Y)$, $S\in\Lb(\U,\Y)$, $R=R\adjun\in\Lb(\U)$ defining $s$ as in \eqref{eq:supply_inf}, simply presume that the supply rate function $s$ is a bounded and real quadratic form on $\Y\times\U$. 
In our considerations, we restrict to semi-bounded, closed and densely defined storage functions.

\begin{definition}\label{def:dissineq}
Let $\sbvek{A\& B}{C\& D}$ be a~system node on $(\X,\U,\Y)$, assume that $Q=Q\adjun\in\Lb(\Y)$, $S\in\Lb(\U,\Y)$, $R=R\adjun\in\Lb(\U)$, and  let $s:\Y\times \U\to \R$ be as in \eqref{eq:supply_inf}. Further, let $\Sc:\X\supset\dom(\Sc)\to\R$ be a~semi-bounded, closed and densely defined real quadratic form. Then the system \eqref{eq:ODEnode} is {\em dissipative with respect to the supply rate functional $s$ and storage function $\Sc$}, if for all $T\in\R_{>0}$, any classical solution $(x(\cdot),u(\cdot),y(\cdot))$ of \eqref{eq:ODEnode} on $[0,T]$, it holds that $x(0),x(T)\in\dom(\Sc)$, and
\begin{equation}\Sc(x(T))-\Sc(x(0))+ \int_0^Ts\bigl(y(t),u(t)\bigr)\dx[t]\geq0.\label{eq:dissineq_inf}\end{equation}
Hereby, we call \eqref{eq:dissineq_inf} {\em dissipation inequality}.
\end{definition}

\begin{remark}\label{rem:dissall}
   Assume the framework of Definition~\ref{def:dissineq}, and assume that $(x(\cdot),u(\cdot),y(\cdot))$ is a~classical trajectory of \eqref{eq:ODEnode} on $[0,T]$. Then, for all $t_0,t_1\in[0,T]$ with $t_0\leq t_1$, it holds that
\[\Sc(x(t_1))-\Sc(x(t_0))+ \int_{t_0}^{t_1}s\bigl(y(t),u(t)\bigr)\dx[t]\geq0.\]
For $t_0=t_1$, this is obvious. If $t_0<t_1$, the statement follows from the definition of dissipativity together with the fact that
   $(\tilde{x}(\cdot),\tilde{u}(\cdot),\tilde{y}(\cdot))$ with $\tilde{x}(t)=x(t+t_0)$, $\tilde{u}(t)=u(t+t_0)$, $\tilde{y}(t)=y(t+t_0)$, $t\in[0,t_1-t_0]$, is a~classical trajectory of \eqref{eq:ODEnode} on $[0,t_1-t_0]$.
\end{remark}
In the sequel we collect some properties of of dissipative systems. First we show that storage functions are bounded on the space $\V$ as defined in \eqref{eq:Vdef}.
\begin{proposition}\label{prop:domS}
   Assume the framework of Definition~\ref{def:dissineq}. Then $\V$ as defined in \eqref{eq:Vdef} fulfills $\V\subset\dom(\Sc)$, and there exists a constant $c>0$ such that
    \begin{equation}\forall\,x_0\in \V:\qquad|\Sc(x_0)|\leq c\, \norm{x_0}_{\V}^2.\label{eq:SVbnd}\end{equation}
\end{proposition}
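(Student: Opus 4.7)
The strategy is to split the statement into two assertions: the inclusion $\V\subset\dom(\Sc)$ and the quadratic bound \eqref{eq:SVbnd}. The first I obtain by producing a classical trajectory through any given $x_0\in\V$ and invoking the definition of dissipativity; the second I get for free via the closed graph theorem once $\V\subset\dom(\Sc)$ is established.

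To prove $\V\subset\dom(\Sc)$, fix $x_0\in\V$ and pick $u_0\in\U$ with $\spvek{x_0}{u_0}\in\dom(A\&B)$, which is possible by the definition of $\V$ in \eqref{eq:Vdef}. The constant function $u(\cdot)\equiv u_0$ lies in $\Wkp{2,1}([0,T];\U)$ for any $T>0$ and satisfies $\spvek{x_0}{u(0)}=\spvek{x_0}{u_0}\in\dom(A\&B)$. Proposition~\ref{prop:solex} then yields a classical trajectory $(x(\cdot),u(\cdot),y(\cdot))$ of \eqref{eq:ODEnode} on $[0,T]$ with $x(0)=x_0$. Since the system is dissipative, Definition~\ref{def:dissineq} forces $x_0=x(0)\in\dom(\Sc)$.

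For the bound, I apply the closed graph theorem to the identity map $\iota\colon\V\to\dom(\Sc)$. The space $\V$ is a Hilbert space by the discussion following \eqref{eq:Vdef}. Since $\Sc$ is closed, densely defined and semi-bounded, $\dom(\Sc)$ endowed with one of the norms \eqref{eq:formnorm} is also a Hilbert space; choose a sign $\sigma\in\{+1,-1\}$ and $\lambda\ge0$ so that $\|x\|_{\Sc}^2:=\sigma\Sc(x)+\lambda\norm{x}_\X^2$ defines the (squared) form norm. Both $\V$ and $\dom(\Sc)$ embed continuously into $\X$: for $\V$, this follows from $\norm{x}_\X\le\norm{x}_\V$, which is immediate from the infimum defining $\norm{\cdot}_\V$; for $\dom(\Sc)$, this is built into the form norm. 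The map $\iota$ is everywhere defined by the first part, and its graph is closed: if $x_n\to x$ in $\V$ and $\iota(x_n)\to y$ in $\dom(\Sc)$, both convergences imply convergence in $\X$, so $x=y$. The closed graph theorem thus supplies a constant $\tilde c>0$ with $\|x\|_{\Sc}^2\le \tilde c\,\norm{x}_\V^2$ for all $x\in\V$, i.e.\ $\sigma\Sc(x)\le \tilde c\,\norm{x}_\V^2$. Combining with the trivial inequality $-\sigma\Sc(x)\le\lambda\norm{x}_\X^2\le\lambda\norm{x}_\V^2$ (from semi-boundedness with the opposite sign) yields $|\Sc(x)|\le c\norm{x}_\V^2$ with $c:=\max\{\tilde c,\lambda\}$.

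The only subtle point is making sure the definition of dissipativity is strong enough to force $x_0\in\dom(\Sc)$ for every $x_0\in\V$; this is precisely why the condition ``$x(0),x(T)\in\dom(\Sc)$'' is built into Definition~\ref{def:dissineq} rather than being a derived property. Everything else is structural: once the inclusion holds, the quantitative bound is handed to us by the closed graph theorem and requires no trajectory estimates. In particular, no well-posedness or stability hypothesis on \eqref{eq:ODEnode} is needed, and the dissipation inequality itself is only used implicitly through Definition~\ref{def:dissineq}.
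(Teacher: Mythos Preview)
Your proof is correct and essentially identical to the paper's: both establish $\V\subset\dom(\Sc)$ by using Proposition~\ref{prop:solex} with a constant input to produce a classical trajectory through $x_0$ and then invoke the requirement $x(0)\in\dom(\Sc)$ from Definition~\ref{def:dissineq}, and both obtain the quadratic bound by applying the closed graph theorem to the inclusion $\iota:\V\to\dom(\Sc)$. Your treatment is slightly more explicit in justifying why the graph of $\iota$ is closed (via the common continuous embedding into $\X$) and in handling the sign in the form norm, but the argument is the same.
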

\begin{proof}
Assume that $x\in \V$. Then there exists some $u_0\in\U$ with $\spvek{x_0}{u_0}\in\dom(A\&B)$. Let $u(\cdot):[0,1]\to\U$ be the constant function $u(\cdot)\equiv u_0$. By Proposition~\ref{prop:solex} there exist $x(\cdot)\in\conC^{1}([0,1];\X)$ and $y(\cdot)\in\conC([0,1];\Y)$, such that $(x(\cdot),u(\cdot),y(\cdot))$ is a classical trajectory of \eqref{eq:ODEnode} on $[0,1]$ with $x(0)=x_0$. In particular, $x_0\in\dom(\Sc)$. We next show that $\Sc$ is bounded on $\V$: Without loss of generality, we assume that $\Sc$ is semi-bounded from below (otherwise consider $-\Sc$). Consider the embedding $\iota:\V\to\dom(\Sc)$. Closedness of $\Sc$ and the fact that $\V$ is complete implies that the graph of $\iota$ is closed. Then the closed graph theorem \cite[Thm.~7.9]{Alt16} gives $\iota\in\Lb(\V,\dom(\Sc))$. That is, for chosing $\lambda\in\R_{\ge 0}$ such that the mapping in \eqref{eq:formnorm} (with ``$\pm=+$'') defines a norm on $\dom(\Sc)$, there exists $C\ge 0$ such that
\[
\forall\,x\in\V:\quad 0\leq \Sc(x) + \lambda \norm{x}_{\X}^{2} \le C\,\norm{x}_{\V}^{2}.
\]
This immediately implies that \eqref{eq:SVbnd} holds for $c=\max\{\lambda,C\}$.
\end{proof}

The previous result can be used to derive some criteria for continuity and smoothness of the real function $t\mapsto\Sc(x(t))$, if $\Sc$ is a~storage function.
We first remark that the continuous embedding of $\dom(\Sc)$ in $\V$ implies that $\V\adjun$ is continuously embedded in $\dom(\Sc)$. Hence, $\widetilde{P}$ restricts to a~bounded operator from $\V$ to $\V\adjun$. This restriction will be also denoted by $\widetilde{P}$ for sake of simplicity.

\begin{proposition}\label{prop:Storsmooth}
   Assume the framework of Definition~\ref{def:dissineq}, let $T>0$, and let $(x(\cdot),u(\cdot),y(\cdot))$ be a~classical trajectory  of \eqref{eq:ODEnode} on $[0,T]$. Then \[\Sc(x(\cdot))\in\conC([0,T]).\]
   If, moreover,    $x(\cdot)\in\conC^{2}([0,T];\X)$ and $u(\cdot)\in\conC^{1}([0,T];\U)$, then 
\[\Sc(x(\cdot))\in\conC^1([0,T]).\]
In this case, for $\widetilde{P}\in\Lb(\dom(\Sc),\dom(\Sc)\adjun)$ such that \eqref{eq:Pdual} holds for $h$ as in \eqref{eq:hsesq}, the derivative of $\Sc(x(\cdot))$ fulfills
\[
\forall\,t\in [0,T]:\quad \ddts\mapsto \Sc(x(t))=2\Re\scprod*{\widetilde{P}x(t)}{A\&B\spvek{x(t)}{u(t)}}_{\V\adjun,\V}.
\]
\end{proposition}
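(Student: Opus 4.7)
My plan is to transfer regularity from $\X$ to the auxiliary space $\V$ from \eqref{eq:Vdef}, and then exploit the fact that $\Sc$ is locally bounded on $\V$ by Proposition~\ref{prop:domS}. Since $(x(\cdot),u(\cdot),y(\cdot))$ is a classical trajectory, $\spvek{x(t)}{u(t)}\in\dom(A\&B)$ with $A\&B\spvek{x(t)}{u(t)}=\dot x(t)$ continuous in $\X$; together with continuity of $x$ into $\X$ and $u$ into $\U$, this yields $\spvek{x(\cdot)}{u(\cdot)}\in\conC([0,T];\dom(A\&B))$. The infimum characterisation of $\norm{\cdot}_{\V}$ then forces
\[\norm{x(t_1)-x(t_2)}_{\V}\leq\norm*{\spvek{x(t_1)-x(t_2)}{u(t_1)-u(t_2)}}_{\dom(A\&B)},\]
so that $x(\cdot)\in\conC([0,T];\V)$. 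By Proposition~\ref{prop:domS} and polarisation, the sesquilinear form $h$ from \eqref{eq:hsesq} is bounded on $\V\times\V$, and the identity $\Sc(\xi)-\Sc(\eta)=h(\xi-\eta,\xi)+h(\eta,\xi-\eta)$ makes $\Sc$ locally Lipschitz on $\V$, whence $\Sc(x(\cdot))\in\conC([0,T])$.

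Under the stronger assumptions $x(\cdot)\in\conC^2([0,T];\X)$ and $u(\cdot)\in\conC^1([0,T];\U)$, I consider the difference quotients $\tfrac1h\spvek{x(t+h)-x(t)}{u(t+h)-u(t)}\in\dom(A\&B)$, whose $A\&B$-images $\tfrac1h(\dot x(t+h)-\dot x(t))$ converge in $\X$ to $\ddot x(t)$ as $h\to0$. Closedness of $A\&B$ then places $\spvek{\dot x(t)}{\dot u(t)}$ into $\dom(A\&B)$ with image $\ddot x(t)$, and the difference quotients converge in the graph norm; hence $\dot x(t)\in\V$ for every $t$, and $x(\cdot)\in\conC^1([0,T];\V)$.

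Expanding the increment as $\Sc(x(t+h))-\Sc(x(t))=h(x(t+h)-x(t),x(t+h))+h(x(t),x(t+h)-x(t))$, dividing by $h$, and using boundedness of $h$ on $\V$ yields
\[\ddts\Sc(x(t))=h(\dot x(t),x(t))+h(x(t),\dot x(t))=2\Re h(x(t),\dot x(t)).\]
Applying $h(x,z)=\scprod{x}{\widetilde{P}z}_{\V,\V\adjun}$ together with the symmetric-sesquilinear property of the duality pairing (i.e.\ swapping arguments produces conjugation) rewrites this as $2\Re\scprod{\widetilde{P}x(t)}{\dot x(t)}_{\V\adjun,\V}$, and substituting $\dot x(t)=A\&B\spvek{x(t)}{u(t)}$ produces the claimed formula. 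The only substantive obstacle is the $\conC^1$-in-$\V$ step: promoting pointwise $\X$-derivatives to $\V$-derivatives requires the closedness of $A\&B$ in an essential way, whereas the remainder of the argument is form-theoretic bookkeeping.
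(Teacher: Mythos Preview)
Your argument is correct and follows essentially the same route as the paper: first lift regularity of $x(\cdot)$ from $\X$ to $\V$, then use that $\Sc(x)=\scprod{\widetilde P x}{x}_{\V\adjun,\V}$ (equivalently, boundedness of $h$ on $\V\times\V$ via Proposition~\ref{prop:domS}) together with the product rule. The paper packages the $\conC^{1}([0,T];\V)$ step into Lemma~\ref{lem:smoothsol}, whose proof differentiates \eqref{eq:ODEnode} formally and uses closedness of $A\&B$, which is exactly what your difference-quotient argument does inline; one small point you should make explicit is that continuity of $\dot x(\cdot)$ in $\V$ (not just pointwise differentiability) follows from $\spvek{\dot x(\cdot)}{\dot u(\cdot)}\in\conC([0,T];\dom(A\&B))$, which you have once $A\&B\spvek{\dot x(t)}{\dot u(t)}=\ddot x(t)$ is established.
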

\begin{proof}
If $(x(\cdot),u(\cdot),y(\cdot))$ be a~classical trajectory  of \eqref{eq:ODEnode} on $[0,T]$, then $x(\cdot)\in\conC([0,T];\V)$. Then continuity of $\Sc(x(\cdot))$ follows from $\Sc(x(t))=\scprod{Px(t)}{x(t)}_{\V\adjun,\V}$ for all $t\in[0,T]$.
If, further, $x(\cdot)\in\conC^{2}([0,T];\X)$ and $u(\cdot)\in\conC^{1}([0,T];\U)$, 
Lemma~\ref{lem:smoothsol} yields that $x(\cdot)\in \conC^{1}([0,T];\V)$. Then the product rule gives continuous differentiability of
\[t\mapsto \Sc(x(t))=\scprod*{\widetilde{P}x(t)}{x(t)}_{\V\adjun,\V}\]
with 
\begin{multline*}
\forall\,t\in [0,T]:\quad \ddts \Sc(x(t))=2\Re\scprod*{\widetilde{P}x(t)}{\dot{x}(t)}_{\V\adjun,\V}\\
=2\Re\scprod*{\widetilde{P}x(t)}{A\&B\spvek{x(t)}{u(t)}}_{\V\adjun,\V},
\end{multline*}
and the proof is complete.
\end{proof}
Now we present our main theorem on the characterization of dissipativity. 
%Hereby, we use that, for a~semi-bounded, closed and densely defined real quadratic form $\Sc:\X\supset\dom(\Sc)\to\R$, the induced operator $P$ extends uniquely to some $\widetilde{P}\in\Lb(\dom(\Sc),\dom(\Sc)\adjun)$ with \eqref{eq:Pdual} for $h$ as in \eqref{eq:hsesq}, as stated at the end of the first section. Note that, if $\V_{A\&B}^1\subset\dom(\Sc)$ (which holds by Proposition~\ref{prop:domS}, if $\Sc$ is a~storage function), then the anti-duals fulfill $\dom(\Sc)\adjun\subset(\V_{A\&B}^1)\adjun$. In this case, $\widetilde{P}$ restricts to a~bounded operator from  $\V_{A\&B}^1$ to $(\V_{A\&B}^1)\adjun$.

\begin{theorem}\label{thm:main}
Let $\sbvek{A\& B}{C\& D}$ be a~system node on $(\X,\U,\Y)$, assume that $Q=Q\adjun\in\Lb(\Y)$, $S\in\Lb(\U,\Y)$, $R=R\adjun\in\Lb(\U)$, and  let $s:\Y\times \U\to \R$ be as in \eqref{eq:supply_inf}. Further, let $\Sc:\X\supset\dom(\Sc)\to\R$ be a~semi-bounded, closed and densely defined real quadratic form, let $\widetilde{P}\in\Lb(\dom(\Sc),\dom(\Sc)\adjun)$ such that  \eqref{eq:Pdual} holds for $h$ as in \eqref{eq:hsesq}). Then the following are equivalent for $\V$ and $\W$ as in \eqref{eq:Vdef} and \eqref{eq:Wdef}.
\begin{enumerate}[label=(\roman{*})]
  \item\label{item:dissineq1} The system \eqref{eq:ODEnode} is dissipative with respect to the supply rate functional $s$ and storage function $\Sc$.
  \item\label{item:dissineq2} $\V\subset\dom(\Sc)$, and there exists a dense subspace $\widetilde{\W}$ of $\W$, such that, for all $\spvek{x}u\in\widetilde{\W}$
  \begin{equation}
    \label{eq:kyp_inf_s}
2\Re\scprod*{\widetilde{P}x}{A\&B \spvek{x}u}_{\V\adjun,\V}+s\left({C\&D\spvek{x}u},u\right)\geq 0.
  \end{equation}
  \item\label{item:dissineq3}
$\V\subset\dom(\Sc)$, and \eqref{eq:kyp_inf_s}  holds for all $\spvek{x}u\in\W$.  \item\label{item:dissineq4} $\V\subset\dom(\Sc)$, and, there exists a~Hilbert space $\Z$ and some $K\&L\in \Lb(\W,\Z)$ with dense range, such that, for all $\spvek{x}u\in\W$,
    \begin{equation}
    \label{eq:kyp_inf_lure}
2\Re\scprod*{\widetilde{P}x}{A\&B \spvek{x}u}_{\V\adjun,\V}+s\left({C\&D\spvek{x}u},u\right)=\norm{K\&L\spvek{x}u}_\Z^2.
  \end{equation}
\end{enumerate}
\end{theorem}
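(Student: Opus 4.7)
The plan is to establish the cyclic chain of implications $(\mathrm{i})\Rightarrow(\mathrm{ii})\Rightarrow(\mathrm{iii})\Rightarrow(\mathrm{iv})\Rightarrow(\mathrm{i})$, noting that $(\mathrm{iii})\Rightarrow(\mathrm{ii})$ and $(\mathrm{iv})\Rightarrow(\mathrm{iii})$ are tautological. The inclusion $\V\subset\dom(\Sc)$ that appears in (ii)--(iv) is supplied in every case by Proposition~\ref{prop:domS}.

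For $(\mathrm{i})\Rightarrow(\mathrm{ii})$, I would fix $\spvek{x_0}{u_0}\in\W$ and construct, via Proposition~\ref{prop:solex}, a classical trajectory $(x,u,y)$ on some $[0,\tau]$ obtained from a smooth input $u(\cdot)$ (e.g.\ a low-order polynomial in $t$) with $u(0)=u_0$ and appropriate compatibility so that $x(\cdot)\in\conC^2([0,\tau];\X)$, $u(\cdot)\in\conC^1([0,\tau];\U)$. The initial data for which this construction succeeds form a subspace $\widetilde{\W}$ that is dense in $\W$ by a standard regularity-lifting argument in the system node framework. For such data, Remark~\ref{rem:dissall} on $[0,\tau]$, dividing by $\tau$ and sending $\tau\to 0^+$, together with the derivative formula in Proposition~\ref{prop:Storsmooth} and continuity of $s$, yields \eqref{eq:kyp_inf_s} on $\widetilde{\W}$.

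The step $(\mathrm{ii})\Rightarrow(\mathrm{iii})$ is a density argument: the functional
\[
F\bigl(\spvek{x}{u}\bigr):=2\Re\scprod*{\widetilde{P}x}{A\&B\spvek{x}u}_{\V\adjun,\V}+s\!\left(C\&D\spvek{x}u,u\right)
\]
is bounded in the $\W$-norm, since $\widetilde{P}\in\Lb(\V,\V\adjun)$, both $\|x\|_\V$ and $\|A\&B\spvek{x}u\|_\V$ are controlled by $\|\spvek{x}u\|_\W$, and $C\&D\in\Lb(\dom(A\&B),\Y)$; thus nonnegativity propagates from $\widetilde{\W}$ to all of $\W$ by continuity. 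For $(\mathrm{iii})\Rightarrow(\mathrm{iv})$, the form $F$ is a bounded, nonnegative real quadratic form on $\W$. Polarizing yields a bounded nonnegative symmetric sesquilinear form $b$; let $\Z$ be the Hilbert space obtained by quotienting $\W$ modulo the kernel of $b$ and completing with respect to $\sqrt{b(\cdot,\cdot)}$, and let $K\&L\in\Lb(\W,\Z)$ be the canonical map. By construction $K\&L$ has dense range and $\|K\&L\spvek{x}u\|_\Z^2=F(\spvek{x}u)$, which is \eqref{eq:kyp_inf_lure}.

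The main obstacle is $(\mathrm{iv})\Rightarrow(\mathrm{i})$. For sufficiently smooth trajectories with $x\in\conC^2([0,T];\X)$ and $u\in\conC^1([0,T];\U)$, Proposition~\ref{prop:Storsmooth} (and the auxiliary smoothness lemma it invokes to place $x(\cdot)$ in $\conC^1([0,T];\V)$, so that $\spvek{x(t)}{u(t)}\in\W$ pointwise) together with \eqref{eq:kyp_inf_lure} gives, after integration,
\[
\Sc(x(T))-\Sc(x(0))+\int_0^T s\bigl(y(t),u(t)\bigr)\dx[t]=\int_0^T\bigl\|K\&L\spvek{x(t)}{u(t)}\bigr\|_\Z^2\dx[t]\ge 0.
\]
A generic classical trajectory must be approximated by such smoother ones, e.g.\ by mollifying $u(\cdot)$ in time while preserving $x(0)$ and the compatibility $\spvek{x(0)}{u(0)}\in\dom(A\&B)$. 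The delicate point is passing to the limit in the storage terms $\Sc(x_n(T))\to\Sc(x(T))$, since $\Sc$ is in general unbounded and merely closed; this forces one to arrange convergence of the endpoint states in the $\V$-topology, which then transports to the $\dom(\Sc)$-topology via the continuous embedding $\V\hookrightarrow\dom(\Sc)$ of Proposition~\ref{prop:domS}. I expect this endpoint regularity to be the most technical step, requiring a careful energy estimate for the approximating system node dynamics.
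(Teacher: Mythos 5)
Your proposal is correct and follows essentially the same route as the paper: the necessity direction via smooth trajectories from $\W$-initial data and differentiation of $\Sc(x(\cdot))$ at $t=0$, the density/continuity upgrade from $\widetilde{\W}$ to $\W$, a factorization of the bounded nonnegative form on $\W$ for the Lur'e-type identity, and mollification in time for the sufficiency direction. The only genuine (and harmless) deviation is in (iii)$\Rightarrow$(iv), where you build $(\Z,K\&L)$ by quotient-and-completion of the polarized form rather than, as the paper does, via Kato's representation theorem and the operator square root $\sqrt{M}$ with $\Z=\overline{\im\sqrt{M}}$; both yield a bounded operator with dense range satisfying \eqref{eq:kyp_inf_lure}. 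One remark on your final worry: no ``careful energy estimate'' is needed for the endpoint convergence $\Sc(x_n(T))\to\Sc(x(T))$, since any classical trajectory automatically satisfies $\spvek{x(\cdot)}{u(\cdot)}\in\conC([0,T];\dom(A\&B))$ and hence $x(\cdot)\in\conC([0,T];\V)$, so uniform continuity of $x(\cdot)$ into $\V$ together with the standard mollifier convergence (and the continuous embedding $\V\hookrightarrow\dom(\Sc)$) already gives $x_n(T-1/n)\to x(T)$ in $\dom(\Sc)$.
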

\begin{proof}
We prove the result according to the pattern
\[
\begin{array}{cccccccccc}
  \text{\ref{item:dissineq1}}&\Rightarrow&\text{\ref{item:dissineq3}}&\Rightarrow&
\text{\ref{item:dissineq4}}&\Rightarrow&
\text{\ref{item:dissineq3}}
 \\[1mm]
    &&  \text{\rotatebox[origin=c]{90}{$\Leftrightarrow$}} &&&&  \text{\rotatebox[origin=c]{90}{$\Leftarrow$}} &  \\[1mm]&&\text{\ref{item:dissineq2}}&&&&
\text{\ref{item:dissineq1}.}
\end{array}
\]
\ref{item:dissineq1}$\Rightarrow$\ref{item:dissineq3}: Assume that \eqref{eq:ODEnode} is dissipative with respect to the supply rate functional $s$ and storage function $\Sc$. Then 
$\V\subset\dom(\Sc)$ follows from Proposition~\ref{prop:domS}. Let $\spvek{x}u\in\W$. Then Lemma~\ref{lem:solexsmooth} yields that
there exists a~classical trajectory $(x(\cdot),u(\cdot),y(\cdot))$ of \eqref{eq:ODEnode} on $[0,1]$ with $x(0)=x$, $u(0)=u$, $x(\cdot)\in \conC^{2}([0,1];\X)$ and $\spvek{x(\cdot)}{u(\cdot)}\in \conC([0,1];\W)$.
Then Proposition~\ref{prop:Storsmooth}  yields that $\scprod{\widetilde{P}x(\cdot)}{x(\cdot)}_{\V\adjun,\V}\in\conC^1([0,1])$ with
\begin{align*}
    2\Re\scprod*{\widetilde{P}x}{A\&B \spvek{x}u}_{\V\adjun,\V}&=\ddt \scprod{\widetilde{P}x(t)}{x(t)}_{\V\adjun,\V}\Big\vert_{t=0}\\&=\lim_{t\searrow0}\frac{\scprod{\widetilde{P}x(t)}{x(t)}_{\V\adjun,\V}-\scprod{\widetilde{P}x(0)}{x(0)}_{\V\adjun,\V}}{t}\\
&\geq \lim_{t\searrow0}\frac{-\int_0^t s\bigl(y(\tau),u(\tau)\bigr)}{t}\\&=-s\bigl(y(0),u(0)\bigr)=-s\left({C\&D\spvek{x}u},u\right).
\end{align*}
\ref{item:dissineq3}$\Rightarrow$\ref{item:dissineq2}: This is trivial.\\
\ref{item:dissineq2}$\Rightarrow$\ref{item:dissineq3}: This follows by a~continuity argument.\\
\ref{item:dissineq3}$\Rightarrow$\ref{item:dissineq4}: 
Consider the mapping
\begin{align*}
    \mathcal{T}:\qquad \W&\to\R,\\
    \spvek{x}{u}&\mapsto 2\Re\scprod*{\widetilde{P}x}{A\&B \spvek{x}u}_{\V\adjun,\V}+s\left({C\&D\spvek{x}u},u\right).
\end{align*}
Then $\mathcal{T}$ is a~bounded nonnegative quadratic form. Hence, by Kato's First Representation Theorem, there exists some nonnegative and self-adjoint $M\in \Lb(\W)$, such that
$\mathcal{T}(v)=\scprod{Mv}{v}_{\W}$ for all $v\in\W$. Then, by \cite[Chap.~V, Thm.~3.35]{Kato80}, $M$ possesses an operator square root $\sqrt{M}\in \Lb(\W)$, i.e., $\sqrt{M}$ is nonnegative, self-adjoint, and it fulfills $\sqrt{M}^2=M$. Now setting $\Z:=\overline{\im \sqrt{M}}$ (endowed with the inner product inherited from $\W$) and $K\&L:=\sqrt{M}$, we have that $K\&L\in \Lb(\W,\Z)$ has dense range and
\begin{align*}
\forall\,\spvek{x}{u}\in\W:\quad
&\phantom{=}2\Re\scprod*{\widetilde{P}x}{A\&B \spvek{x}u}_{\V\adjun,\V}+s\left({C\&D\spvek{x}u},u\right)\\&=\mathcal{T}(\spvek{x}{u})
=\scprod{M\spvek{x}{u}}{\spvek{x}{u}}_{\W}\\&=
\scprod*{\sqrt{M}\spvek{x}{u}}{\sqrt{M}\spvek{x}{u}}_\Z=\norm*{K\&L \spvek{x}{u}}_\Z^2.
\end{align*}
\ref{item:dissineq4}$\Rightarrow$\ref{item:dissineq3}: This follows directly from the nonnegativity of the norm.\\ 
\ref{item:dissineq3}$\Rightarrow$\ref{item:dissineq1}:
Let $(x(\cdot),u(\cdot),y(\cdot))$ be a~classical trajectory of \eqref{eq:ODEnode} on $[0,T]$. Consider a `mollifier sequence' $(\alpha_n)_{n\in\N}$ in $\conC^\infty(\R)$, see Definition~\ref{def:molli}, and consider the 
convolutions 
\begin{equation}
\begin{aligned}
x_n(\cdot)&=(\alpha_n\ast x)(\cdot):[0,T]\to\X,\\
u_n(\cdot)&=(\alpha_n\ast u)(\cdot):[0,T]\to\U, \\
y_n(\cdot)&=(\alpha_n\ast y)(\cdot) :[0,T]\to\Y.    
\end{aligned}\label{eq:conxuy}
\end{equation}
Then Lemma~\ref{lem:mollconv} yields that these functions are all infinitely often differentiable. Further, the same lemma implies that the restriction of  $(x_n(\cdot),u_n(\cdot),y_n(\cdot))$ to $[0,T-1/n]$ is a~classical trajectory of \eqref{eq:ODEnode} on $[0,T-1/n]$.
Then Lemma~\ref{lem:smoothsol}
yields that the restrictions of $x_n(\cdot)$ and $\spvek{x_n(\cdot)}{u_n(\cdot)}$ to $[0,T-1/n]$ are 
in $\conC^{1}([0,t];\V)$ and $\conC([0,t];\W)$, resp. An application of Proposition~\ref{prop:Storsmooth} leads to continuous differentiability of $\scprod{\widetilde{P}x_n(\cdot)}{x_n(\cdot)}_{\V\adjun,\V}$, and for all $t\in[0,T-1/n]$,
\[
\ddts\scprod{\widetilde{P}x_n(t)}{x_n(t)}=2\Re\scprod*{\widetilde{P}x_n(t)}{A\&B\spvek{{x}_n(t)}{u_n(t)}}_{\V\adjun,\V}\geq -s\bigl(y_n(t),u_n(t)\bigr).
\]
Now an integration over $[0,T-1/n]$ gives
\begin{equation}
\forall\,n\in\N \quad \Sc(x_n(T-1/n))-\Sc(x_n(0))+ \int_0^{T-1/n}s\bigl(y_n(t),u_n(t)\bigr)\dx[t]\geq0.\label{eq:apprdiss}
\end{equation}
It follows from the mean value theorem of integration together with uniform continuity of $x(\cdot):[0,T]\to\V$ that $(x_n(\cdot))$ converges to $x(\cdot)$ in $\V$. Further, by \cite[Thm.~4.15]{Alt16}, 
we have
\[\lim_{n\to\infty}\int_0^{T-1/n}s\bigl(y_n(t),u_n(t)\bigr)\dx[t]=\int_0^Ts\bigl(y(t),u(t)\bigr)\dx[t].\]
Now taking the limit $n\to\infty$ in \eqref{eq:apprdiss}, we obtain that the dissipation inequality holds.
\end{proof}
\begin{remark}[Dissipation inequality]\label{rem:dissineq}
Assume the framework of Theorem~\ref{thm:main}. 
\begin{enumerate}[label=(\alph{*})]
\item\label{rem:dissineq1} Consider the space
\[\W_P^0:=\setdef{\spvek{x}{u}\in \dom(A\&B)}{x\in \dom(P)},\]
which is a~Hilbert space endowed with the norm
\[\norm*{\spvek{x}{u}}_{\W_P^0}^2=\norm*{\spvek{x}{u}}_{\V}^2+\|Px\|^2_\X.\]
Under the additional assumptions that 
\begin{equation}
    \W^1_P:=\setdef{\spvek{x}{u}\in \V}{x\in \dom(P)}\label{eq:domPdense} 
\end{equation} is dense
in both $\W$ and $\W_P^0$, the statements in Theorem~\ref{thm:main} are equivalent to
\begin{enumerate}[label=(\roman{*}), start=6]
\item\label{item:dissineq6} $\W\subset\dom(\Sc)$ and 
  \begin{multline*}
\forall\,    \spvek{x}u\in\dom(A\&B)\text{ with }x\in\dom(P):\\
2\Re\scprod*{Px}{A\&B \spvek{x}u}_{\X}+s\left({C\&D\spvek{x}u},u\right)\geq 0.
  \end{multline*}
\end{enumerate}
To verify this, we first state that
\begin{equation}\label{eq:KYPdomP}
\begin{aligned}
\forall\spvek{x}{u}\in\W^1_P:\quad &\phantom{=}\;\;2\Re\scprod*{Px}{A\&B \spvek{x}u}_{\X}\quad\;\;\;+s\left({C\&D\spvek{x}u},u\right)\\&=2\Re\scprod*{\widetilde{P}x}{A\&B \spvek{x}u}_{\V\adjun,\V}+s\left({C\&D\spvek{x}u},u\right).
\end{aligned}
\end{equation}
Consequently, by using density of $\W^1_P$ in $\W$, we see that
\ref{item:dissineq6} implies \ref{item:dissineq2}. On the other hand, if \ref{item:dissineq3} holds, then \eqref{eq:KYPdomP} implies that
  \[
\forall\,    \spvek{x}u\in\W^1_P:\qquad
2\Re\scprod*{Px}{A\&B \spvek{x}u}_{\X}+s\left({C\&D\spvek{x}u},u\right)\geq 0.
  \]
Then density of $\W^1_P$ in $\W^0_P$ yields, together with a~continuity argument, that  \ref{item:dissineq6} is valid.\\
The author of this article is not certain whether the above density assumptions are always satisfied. In particular, no counterexample is known to him.
\item\label{rem:dissineq2} If \eqref{eq:ODEnode} is dissipative with respect to the supply rate functional $s$ and storage function $\Sc$,
then Theorem~\ref{thm:main} yields that there exists a~Hilbert space $\Z$ and some
$K\&L\in \Lb(\V,\Z)$ with dense range, such that \eqref{eq:kyp_inf_lure} holds for all $\spvek{x}u\in\V$.
Now using the argumentation as in Theorem~\ref{thm:main}, we obtain that any classical trajectory $(x(\cdot),u(\cdot),y(\cdot))$ of \eqref{eq:ODEnode} on $[0,T]$ fulfills
\begin{equation}
\Sc(x(T))-\Sc(x(0))+ \int_0^Ts\bigl(y(t),u(t)\bigr)\dx[t]=\int_0^T\norm*{K\&L\spvek{x(t)}{u(t)}}_\Z^2\dx[t].\label{eq:dissrate}\end{equation}
The term $\norm*{K\&L\spvek{x(t)}{u(t)}}_\Z^2$ refers to what is called {\em dissipation rate} in \cite{Will71a}.\\
Note that \eqref{eq:dissrate} implies that, for any classical trajectory $(x(\cdot),u(\cdot),y(\cdot))$ of \eqref{eq:ODEnode} on $[0,T]$, it holds that \begin{equation}
K\&L\spvek{x(\cdot)}{u(\cdot)}\in\Lp{2}([0,T];\Z).\label{eq:dissL2}
\end{equation}
\item\label{rem:dissineq3} For $k\in\N$, define
\[
H_{0r}^k([0,T];\X)
:= \setdef*{\,v \in H^k([0,T];\X)\,}{\,v(T)=\cdots=\tfrac{\mathrm d^{\,k-1}}{\mathrm dt^{\,k-1}}v(T)=0\,},
\]
and denote its antidual space by
\[
H_{0l}^{-k}([0,T];\X) := H_{0r}^k([0,T];\X)\adjun.
\]
As shown in \cite[Rem.~3.5]{FriReRe24}, generalized solutions of \eqref{eq:ODEnode} on $[0,T]$ satisfy
\[
\spvek{x(\cdot)}{u(\cdot)} \in H_{0l}^{-2}([0,T];\dom(A\&B)).
\]
By the same argument,
\[
\spvek{x(\cdot)}{u(\cdot)} \in H_{0l}^{-3}([0,T];\W).
\]
Thus, increasing the state regularity entails a decrease in temporal regularity. Consequently, for generalized solutions the expression $K\&L\spvek{x(\cdot)}{u(\cdot)}$ may be viewed as an element of $H_{0l}^{-3}([0,T];\Z)$.
\end{enumerate}
\end{remark}
We further show that the dissipation inequality holds for the class of generalized trajectories with $x(\cdot)\in\conC([0,T];\dom(\Sc))$. Moreover, we show that $K\&L\,\spvek{x(\cdot)}{u(\cdot)}$ (a distribution according to Remark~\ref{rem:dissineq}\,\ref{rem:dissineq3}) actually belongs to $\Lp{2}$ in this case.
%For the latter notion, we refer to .
\begin{proposition}\label{prop:domScont}
Under the assumptions of Definition~\ref{def:dissineq}, let $(x(\cdot),u(\cdot),y(\cdot))$ be a generalized trajectory of \eqref{eq:ODEnode} on $[0,T]$ with $x(\cdot)\in\conC([0,T];\dom(\Sc))$. Then the following statements hold:
\begin{enumerate}[label=(\alph{*})]
\item \eqref{eq:dissL2} and \eqref{eq:dissrate} hold for $K\&L\in\Lb(\W;\Z)$ as in Theorem~\ref{thm:main}\,\ref{item:dissineq4}.
\item $\Sc(x(\cdot))\in\Wkp{1,1}([0,T])$.
\item The dissipation inequality \eqref{eq:dissineq_inf} holds.
\end{enumerate}
\end{proposition}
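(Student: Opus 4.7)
The plan is to reduce everything to the classical setting via time convolution with a mollifier $(\alpha_n)$ from Definition~\ref{def:molli}, mirroring the technique in the implication \ref{item:dissineq3}$\Rightarrow$\ref{item:dissineq1} of Theorem~\ref{thm:main}. Setting $x_n:=\alpha_n\ast x$, $u_n:=\alpha_n\ast u$, $y_n:=\alpha_n\ast y$, Lemma~\ref{lem:mollconv} together with Lemma~\ref{lem:smoothsol} yields that on each compact subinterval of $(0,T)$ the triple $(x_n(\cdot),u_n(\cdot),y_n(\cdot))$ is a classical trajectory with $\spvek{x_n(\cdot)}{u_n(\cdot)}$ continuous into $\W$. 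Remark~\ref{rem:dissineq}\,\ref{rem:dissineq2} then applies and supplies, for all $t_1,t_2$ in the valid subinterval,
\begin{equation*}
\Sc(x_n(t_2))-\Sc(x_n(t_1))+\int_{t_1}^{t_2}s(y_n,u_n)\dx[\tau]=\int_{t_1}^{t_2}\norm*{K\&L\spvek{x_n(\tau)}{u_n(\tau)}}_{\Z}^{2}\dx[\tau].
\end{equation*}

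The decisive role of the hypothesis $x(\cdot)\in\conC([0,T];\dom(\Sc))$ is to pass to the limit: it ensures $x_n\to x$ uniformly in $\dom(\Sc)$ on each compact subinterval of $(0,T)$, and by continuity at the endpoints also $\Sc(x_n(t))\to\Sc(x(t))$ for every $t\in[0,T]$ (with an appropriate choice of mollifier support). Combined with the $\Lp{2}$ convergences $u_n\to u$, $y_n\to y$, the left-hand side above tends to
\[I(t_1,t_2):=\Sc(x(t_2))-\Sc(x(t_1))+\int_{t_1}^{t_2}s(y,u)\dx[\tau],\]
and non-negativity of the right-hand side is preserved in the limit; the choice $t_1=0$, $t_2=T$ yields \eqref{eq:dissineq_inf} and hence assertion (c).

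For (a), set $g_n:=K\&L\spvek{x_n}{u_n}$. Convergence of $\int_0^T\|g_n\|_\Z^{2}$ to $I(0,T)<\infty$ shows that $(g_n)$ is bounded in $\Lp{2}((0,T);\Z)$, so along a subsequence $g_n\rightharpoonup g^*$ weakly. To identify $g^*$ with $K\&L\spvek{x(\cdot)}{u(\cdot)}$ in the distributional sense of Remark~\ref{rem:dissineq}\,\ref{rem:dissineq3}, I would test against $\Cc[\infty]((0,T);\Z)$ functions and pass to the adjoint of $K\&L$, exploiting that $\spvek{x_n}{u_n}\to\spvek{x}{u}$ in $H_{0l}^{-3}([0,T];\W)$. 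Because the weak limit is then subsequence-independent, the full sequence converges weakly and \eqref{eq:dissL2} follows.

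The principal obstacle is the equality in \eqref{eq:dissrate} rather than the inequality that weak lower semicontinuity immediately yields, i.e.\ $\int_0^T\|g^*\|_\Z^{2}\le I(0,T)$. What is required is the upgrade of $g_n\rightharpoonup g^*$ to norm convergence $\|g_n\|_{\Lp{2}}\to\|g^*\|_{\Lp{2}}$, which in the Hilbert space $\Lp{2}((0,T);\Z)$ automatically yields strong convergence and hence equality. I expect this to be handled by exploiting the square-root structure $K\&L=\sqrt{M}$ constructed in the proof of Theorem~\ref{thm:main}\,\ref{item:dissineq4} together with a closedness/continuity property of the lifted quadratic form $\mathcal{T}$ on $\Lp{2}([0,T];\W)$, combined with the pointwise identity $\mathcal{T}(\spvek{x_n(t)}{u_n(t)})=\|g_n(t)\|_\Z^{2}$. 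Once \eqref{eq:dissrate} is established, claim (b) is immediate: it exhibits $\Sc(x(t))-\Sc(x(0))$ as a difference of $\Lp{1}$-integrals on $[0,t]$, whence $\Sc(x(\cdot))\in\Wkp{1,1}([0,T])$.
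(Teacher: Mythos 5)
Your overall strategy coincides with the paper's: mollify in time, apply the exact dissipation-rate identity from Remark~\ref{rem:dissineq}\,\ref{rem:dissineq2} to the smooth classical trajectories $(x_n,u_n,y_n)$ on $[0,T-1/n]$, and pass to the limit using the hypothesis $x(\cdot)\in\conC([0,T];\dom(\Sc))$ via Lemma~\ref{lem:mollconv}. Part (c) is fine as you argue it (for the inequality alone, nonnegativity of the right-hand side plus convergence of the left-hand side suffices), and (b) is indeed immediate once (a) is in place.

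The genuine gap is exactly the one you flag and do not close: upgrading $g_n:=K\&L\spvek{x_n}{u_n}$ from a bounded (hence weakly convergent) sequence in $\Lp{2}$ to one whose norms converge, which is what the \emph{equality} in \eqref{eq:dissrate} requires. Your proposed repair via ``closedness of the lifted quadratic form $\mathcal{T}$ on $\Lp{2}([0,T];\W)$'' is not an argument; $\spvek{x_n}{u_n}$ does not converge to $\spvek{x}{u}$ in $\Lp{2}([0,T];\W)$ (only distributionally, in $H^{-3}_{0l}$), so no continuity property of $\mathcal{T}$ applies. The missing observation is that $K\&L$ is a fixed bounded operator acting pointwise in time, hence it commutes with time-convolution: $g_n=\alpha_n\ast\bigl(K\&L\spvek{x(\cdot)}{u(\cdot)}\bigr)=\alpha_n\ast g$ for the fixed distribution $g$ of Remark~\ref{rem:dissineq}\,\ref{rem:dissineq3}. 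Boundedness of $\norm{\alpha_n\ast g}_{\Lp{2}([0,T_f];\Z)}$ then forces $g\in\Lp{2}([0,T_f];\Z)$ (a weak subsequential limit of $\alpha_n\ast g$ must coincide with the distributional limit $g$), and standard mollifier convergence (\cite[Thm.~4.22]{Brezis2011}, as used in Lemma~\ref{lem:mollconv}) gives $\alpha_n\ast g\to g$ \emph{strongly} in $\Lp{2}$, hence convergence of the norms and the equality \eqref{eq:dissrate}; the limit $T_f\nearrow T$ then yields \eqref{eq:dissL2}. With this insertion your weak-compactness detour becomes unnecessary and the proof matches the paper's.
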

\begin{proof}\
\begin{enumerate}[label=(\alph{*})]
\item Let $(x(\cdot),u(\cdot),y(\cdot))$ be a~generalized trajectory of \eqref{eq:ODEnode} on $[0,T]$. Consider a `mollifier sequence' $(\alpha_n)_{n\in\N}$ in $\conC^\infty(\R)$, see Definition~\ref{def:molli}, and consider the 
convolutions $x_n(\cdot)$, $u_n(\cdot)$ and $y_n(\cdot)$ as in \eqref{eq:conxuy}.
Then Lemma~\ref{lem:mollconv} yields that, for all $n\in\N$, the restriction of
\[\big(x_n(\cdot),u_n(\cdot),y_n(\cdot)\big)\]
to $[0,T-1/n]$ is a~smooth classical trajectory of \eqref{eq:ODEnode} on $[0,T-1/n]$. Hence, by Remark~\ref{rem:dissineq}\,\ref{rem:dissineq2}, we have, for all $n\in\N$, $T_f\in[0,T-1/n]$,
\[\Sc(x_n(T_f))-\Sc(x_n(0))+ \int_0^{T_f}s\bigl(y_n(t),u_n(t)\bigr)\dx[t]=\int_0^{T-1/n}\norm*{K\&L\spvek{x_n(t)}{u_n(t)}}_\Z^2\dx[t].\]
Now taking the limit $n\to\infty$, we obtain from Lemma~\ref{lem:mollconv} (with $\widetilde{X}=\dom(\Sc)$) that the right hand side of the above equation converges. In particular, the restriction of $K\&L\spvek{x(\cdot)}{u(\cdot)}$ to $[0,T_f)$ is in $\Lp{2}$ with
\begin{multline*}
\forall\,T_f\in[0,T):\quad \Sc(x(T_f))-\Sc(x(0))+ \int_0^{T_f}s\bigl(y(t),u(t)\bigr)\dx[t]\\=\norm*{K\&L\spvek{x(\cdot)}{u(\cdot)}}_{\Lp{2}([0,T_f];\Z)}^2.
\end{multline*}
Invoking that $x(\cdot)$ is continuous on $[0,T]$ as a~mapping to $\dom(\Sc)$, the limit $T_f\nearrow T$ leads to \eqref{eq:dissL2} and \eqref{eq:dissrate}.
\item 
By using the already proven part, we obtain that, for
\[z(\cdot):=K\&L\spvek{x(\cdot)}{u(\cdot)},\]
it holds that
\[\forall\, T_f\in[0,T]:\quad \Sc(x(T_f))=\Sc(x(0))- \int_0^{T_f}s\bigl(y(t),u(t)\bigr)\dx[t]+\int_0^{T_f}\norm{z(t)}_{\Z}^2\dx[t].\]
The right hand side is, because of square integrability of $u(\cdot)$, $y(\cdot)$ and $z(\cdot)$ weakly differentiable with integrable derivative (equivalently, it is absolutely continuous), i.e.,
$\Sc(x(\cdot))\in\Wkp{1,1}([0,T])$.
\item This is a~direct consequence of the first statement in this proposition.
\end{enumerate}
\end{proof}

We make a few additional comments on passive systems.
\begin{remark}[Passive systems]
In \cite{Staf02,ArSt07,St02}, so-called \emph{impedance passive} and \emph{scattering passive} systems are considered. In the notation of \eqref{eq:supply_inf}, these correspond to the supply rate functionals \eqref{eq:supply_inf} with
\[
\begin{aligned}
\text{impedance passive:}\quad 
& \sbmat{Q}{S}{S\adjun}{R} \;=\; \sbmat{0}{\Rf_\U\adjun}{\Rf_\U}{0},\quad\text{with $\Y=\U\adjun$}\\[0.5ex]
\text{scattering passive:}\quad 
& \sbmat{Q}{S}{S\adjun}{R} \;=\; \,\sbmat{-\id_\Y}{0}{0}{\id_\U}.
\end{aligned}
\]
Hereby, $\Rf_\U$ is the {\em Riesz map}, sending $u\in \U$ to the functional $v\mapsto\langle u,v\rangle_{\U}$. Since $\U\adjun$ is the anti-dual of $\U$, we have that $\Rf_\U$ is a linear isometric isomorphism from $\U$ to $\U\adjun$. In particular, $\Rf_\U\adjun=\Rf_\U^{-1}$.

The more special concepts of {\em internal impedance/scattering passivity} have been considered, which refer to the dissipation inequality
\[\norm{x(T)}^2-\norm{x(0)}^2\leq \int_0^ts\bigl(y(t),u(t)\bigr)\dx[t].\]
with respective supply rate functionals as above. 
In the context of the dissipation inequality \eqref{eq:dissineq_inf}, this gives rise to the storage function $\Sc(x)=-\norm{x}_\X^2$.  This property has been characterized by means of the so called {\em Kalman–Yakubovich–Popov inequality}, which basically means that \
\[\forall\, \spvek{x}u\in\dom(A\&B):\quad 2\Re\scprod*{x}{A\&B \spvek{x}u}_{\V\adjun,\V}-s\left({C\&D\spvek{x}u},u\right)\leq 0.\]
Apart from the specialization to specific supply rate functionals, this is related to \eqref{eq:kyp_inf_s} via the consideration of $P=-\id_\X$, that the inequality holds on $\dom(A\&B)$ instead of the smaller space $\W$ and that the inner product on $\X$ is considered instead of the duality product on $\V$ and its anti-dual. The latter can be inferred also from \eqref{eq:kyp_inf_s} by combining boundedness of $P$ with a~density argument. Here we also refer to the forthcoming section, in which the additional benefit of bounded storage functions is highlighted.

Furthermore, \cite{ArSt07} studies, for supply functions corresponding to impedance and scattering passivity, systems that  satisfy $\int_0^Ts\bigl(y(t),u(t)\bigr)\dx[t]\geq0$ 
for all $T>0$ and for every classical (hence also every generalized) trajectory on $[0,T]$ with $x(0)=0$. Under the assumptions of approximate controllability and approximate observability, it is shown that nonpositive storage functions exist. This indicates that restricting attention to bounded (or coercive) storage functions is overly restrictive. In the impedance passive case, the existence of bounded storage functions requires additional hypotheses, such as $A$ generating a strongly continuous group on $\X$ \cite{Pand99}. Note that the results 
\cite{ArSt07,Pand99} generalize the so-called {\em positive real lemma} and {\em bounded real lemma} (see \cite{AndeV73}) to the infinite-dimensional case.
\end{remark}

\section{Bounded storage functions}\label{sec:bndstor}
We now derive additional properties that occur in the special case where the storage function is bounded. In particular, we establish a variant of our main result (Theorem~\ref{thm:main}) under the additional assumption of a bounded storage function.

It is almost superfluous to note that boundedness, together with closedness and dense domain, implies that a form is defined everywhere. 
Further, the operators induced by such forms are bounded and defined on all of $\X$.

\begin{theorem}\label{thm:mainbnd}
Let $\sbvek{A\& B}{C\& D}$ be a~system node on $(\X,\U,\Y)$, assume that $Q=Q\adjun\in\Lb(\Y)$, $S\in\Lb(\U,\Y)$, $R=R\adjun\in\Lb(\U)$, and  let $s:\Y\times \U\to \R$ be as in \eqref{eq:supply_inf}. Further, let $\Sc:\X\to\R$ be a~bounded and real quadratic form, and let $P\in\Lb(\X)$ be the operator induced by $\Sc$. Then the following are equivalent for $\V$ and $\W$ as in \eqref{eq:Vdef} and \eqref{eq:Wdef}.
\begin{enumerate}[label=(\roman{*}')]
  \item\label{item:dissineq1bnd} The system \eqref{eq:ODEnode} is dissipative with respect to the supply rate functional $s$ and storage function $\Sc$.
  \item\label{item:dissineq2bnd} There exists a dense subspace $\widetilde{\W}$ of $\dom(A\&B)$, such that, for all $\spvek{x}u\in\widetilde{\W}$
  \begin{equation}
    \label{eq:kyp_inf_s_bnd}
2\Re\scprod*{Px}{A\&B \spvek{x}u}_{\X}+s\left({C\&D\spvek{x}u},u\right)\geq 0.
  \end{equation}
  \item\label{item:dissineq3bnd}
\eqref{eq:kyp_inf_s_bnd}  holds for all $\spvek{x}u\in\dom(A\&B)$.
  \item\label{item:dissineq4bnd} There exists a~Hilbert space $\Z$ and some $K\&L\in \Lb(\dom(A\&B),\Z)$ with dense range, such that, for all $\spvek{x}u\in\dom(A\&B)$,
    \begin{equation}
    \label{eq:kyp_inf_lurebnd}
2\Re\scprod*{{P}x}{A\&B \spvek{x}u}_{\X}+s\left({C\&D\spvek{x}u},u\right)=\norm{K\&L\spvek{x}u}_\Z^2.
  \end{equation}
\item\label{item:dissineq5bnd} The dissipation inequality \eqref{eq:dissineq_inf} holds for all generalized trajectories $(x(\cdot),u(\cdot),y(\cdot))$ of \eqref{eq:ODEnode} on $[0,T]$.
\end{enumerate}
\end{theorem}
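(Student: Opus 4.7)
The plan is to exploit the boundedness of $\Sc$, which promotes $P$ to an element of $\Lb(\X)$ and thereby makes $\Sc(x)=\scprod{Px}{x}_\X$ continuous on all of $\X$ while rendering the bilinear expression $\scprod{Px}{A\&B\spvek{x}u}_\X$ well-defined and continuous in the graph norm on the whole of $\dom(A\&B)$. As a consequence, the auxiliary spaces $\V,\W$ and the duality pairings $\scprod{\cdot}{\cdot}_{\V\adjun,\V}$ that dominate the proof of Theorem~\ref{thm:main} can be avoided entirely. I would establish the cycle \ref{item:dissineq5bnd}$\Rightarrow$\ref{item:dissineq1bnd}$\Rightarrow$\ref{item:dissineq3bnd}$\Rightarrow$\ref{item:dissineq5bnd}, and then obtain the remaining equivalences \ref{item:dissineq2bnd}$\Leftrightarrow$\ref{item:dissineq3bnd}$\Leftrightarrow$\ref{item:dissineq4bnd} by straightforward adaptations of arguments already given in the paper.

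The implication \ref{item:dissineq5bnd}$\Rightarrow$\ref{item:dissineq1bnd} is immediate, since every classical trajectory is a generalized one. For \ref{item:dissineq1bnd}$\Rightarrow$\ref{item:dissineq3bnd}, I would fix any $\spvek{x_0}{u_0}\in\dom(A\&B)$ and apply Proposition~\ref{prop:solex} with the constant input $u(\cdot)\equiv u_0$ to obtain a classical trajectory $(x(\cdot),u(\cdot),y(\cdot))$ on $[0,1]$ with $x(0)=x_0$. Boundedness of $P$ combined with $x(\cdot)\in\conC^1([0,1];\X)$ gives $t\mapsto\Sc(x(t))=\scprod{Px(t)}{x(t)}_\X$ in $\conC^1([0,1])$, with right-derivative at $0$ equal to $2\Re\scprod{Px_0}{A\&B\spvek{x_0}{u_0}}_\X$, while continuity of $u(\cdot)$, $y(\cdot)$ together with boundedness of $Q,S,R$ yields continuity of $s(y(\cdot),u(\cdot))$ at $0$. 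Dividing the dissipation inequality \eqref{eq:dissineq_inf} by $T$ and sending $T\to 0^+$ then yields \eqref{eq:kyp_inf_s_bnd}. Note that, unlike in Theorem~\ref{thm:main}, no restriction to $\W$ is needed, because the differentiation of $\Sc(x(\cdot))$ works for merely $\conC^1$ trajectories. For \ref{item:dissineq3bnd}$\Rightarrow$\ref{item:dissineq5bnd}, on a classical trajectory the same differentiation argument together with pointwise application of \eqref{eq:kyp_inf_s_bnd} and integration over $[0,T]$ produces \eqref{eq:dissineq_inf}. A generalized trajectory is, by Definition~\ref{def:traj}, a limit of classical trajectories in $\conC([0,T];\X)\times\Lp{2}([0,T];\U)\times\Lp{2}([0,T];\Y)$; the dissipation inequality persists in the limit because $\Sc$ is continuous on $\X$ and because $(y(\cdot),u(\cdot))\mapsto \int_0^T s(y(t),u(t))\dx[t]$ is continuous on $\Lp{2}([0,T];\Y)\times\Lp{2}([0,T];\U)$, both facts stemming from the boundedness hypotheses on $P$ and on $Q,S,R$.

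The remaining equivalences are routine. \ref{item:dissineq3bnd}$\Rightarrow$\ref{item:dissineq2bnd} is immediate upon choosing $\widetilde{\W}=\dom(A\&B)$, and \ref{item:dissineq2bnd}$\Rightarrow$\ref{item:dissineq3bnd} follows by density, since each of the three summands on the left-hand side of \eqref{eq:kyp_inf_s_bnd} is continuous in the graph norm of $\dom(A\&B)$; here it is precisely boundedness of $P$ that makes the first summand continuous. The equivalence \ref{item:dissineq3bnd}$\Leftrightarrow$\ref{item:dissineq4bnd} is obtained by the same Kato first-representation and operator-square-root argument used in the proof of \ref{item:dissineq3}$\Leftrightarrow$\ref{item:dissineq4} of Theorem~\ref{thm:main}, now applied to the bounded nonnegative quadratic form on $\dom(A\&B)$ defined by the left-hand side of \eqref{eq:kyp_inf_s_bnd}.

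The only genuinely new ingredient relative to Theorem~\ref{thm:main} is the characterization \ref{item:dissineq5bnd}, and the main (albeit mild) obstacle is closing the limit in the generalized-trajectory step of \ref{item:dissineq3bnd}$\Rightarrow$\ref{item:dissineq5bnd}. Boundedness of $\Sc$ is exactly what turns the dissipation inequality into a closed condition in the topology of Definition~\ref{def:traj}: in the setting of Theorem~\ref{thm:main}, where $\Sc$ may be unbounded, one cannot expect $\Sc(x(T))$ to be finite for a generic limit, which is why Proposition~\ref{prop:domScont} had to impose the stronger regularity $x(\cdot)\in\conC([0,T];\dom(\Sc))$. Apart from this, the bounded case is a streamlined version of Theorem~\ref{thm:main} that sidesteps the delicate manipulations with $\V,\V\adjun$ and $\W$.
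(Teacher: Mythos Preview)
Your argument is correct, but it takes a genuinely different route from the paper. The paper does not prove the cycle \ref{item:dissineq5bnd}$\Rightarrow$\ref{item:dissineq1bnd}$\Rightarrow$\ref{item:dissineq3bnd}$\Rightarrow$\ref{item:dissineq5bnd} from scratch. Instead, it reduces everything to Theorem~\ref{thm:main}: it shows that each primed condition is equivalent to its unprimed counterpart (e.g.\ \ref{item:dissineq3bnd}$\Leftrightarrow$(iii) via $\W\subset\dom(A\&B)$ and the core property of $\W$ from Lemma~\ref{eq:Vcore}; \ref{item:dissineq4bnd}$\Leftrightarrow$(iv) via an extension/density argument), and for the new item \ref{item:dissineq5bnd} it invokes Proposition~\ref{prop:domScont} (the mollifier construction) together with $\dom(\Sc)=\X$. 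In short, the paper piggybacks on the unbounded theory already developed.

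Your approach is more self-contained and arguably cleaner for the bounded case in isolation. The key observation you exploit---and which the paper does \emph{not} use at this point---is that boundedness of $P$ makes $t\mapsto\scprod{Px(t)}{x(t)}_\X$ of class $\conC^1$ for any classical trajectory (merely $x(\cdot)\in\conC^1([0,T];\X)$ suffices), so both the differentiation step \ref{item:dissineq1bnd}$\Rightarrow$\ref{item:dissineq3bnd} and the integration step \ref{item:dissineq3bnd}$\Rightarrow$\ref{item:dissineq5bnd} go through without ever touching $\V$, $\W$, or mollifiers. What the paper's route buys, by contrast, is a transparent link between Theorems~\ref{thm:main} and~\ref{thm:mainbnd}: one sees explicitly that the bounded-case conditions are nothing but the general conditions restricted, and the passage from $K\&L\in\Lb(\W,\Z)$ to $K\&L\in\Lb(\dom(A\&B),\Z)$ is made via the density of $\W$ in $\dom(A\&B)$ rather than by redoing the square-root construction.
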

\begin{proof}
In the following, the lower-case Roman numerals without primes refer to the corresponding statements in Theorem~\ref{thm:main}. Our present result is established once we prove the following implications under the additional assumption of bounded storage function:
\[
\begin{array}{cccccccc}
\text{\ref{item:dissineq1}}&\Leftrightarrow&\text{\ref{item:dissineq1bnd}}&\Leftrightarrow&\text{\ref{item:dissineq5bnd}}\\
\text{\ref{item:dissineq3}}&\Rightarrow&\text{\ref{item:dissineq2bnd}}&\Rightarrow&\text{\ref{item:dissineq3bnd}}&\Rightarrow&\text{\ref{item:dissineq3}}\\
\text{\ref{item:dissineq4}}&\Leftrightarrow&\text{\ref{item:dissineq4bnd}.}
\end{array}\]
Let us first show the first line: The equivalence between \ref{item:dissineq1bnd} and \ref{item:dissineq1} is immediate, as they state exactly the same assertion. Since classical trajectories are generalized trajectories, we further have that \ref{item:dissineq5bnd} implies \ref{item:dissineq1bnd}. Since, our assumption on $\Sc$ gives $\dom(\Sc)=\X$, the implication ``\ref{item:dissineq1bnd}$\Rightarrow$\ref{item:dissineq5bnd}'' follows from Proposition~\ref{prop:domScont}.\\
For the remaining parts, we first observe that the boundedness of $\Sc$ implies that $\widetilde{P}$ from Theorem~\ref{thm:main} belongs to $\Lb(\X)$. Consequently, we have $\widetilde{P}=P$, a fact that will be used in the sequel.\\
\ref{item:dissineq3}$\Rightarrow$\ref{item:dissineq2bnd}: This follows from the fact that $\W$ is a core of $A\&B$, see Lemma~\ref{eq:Vcore}.\\
\ref{item:dissineq2bnd}$\Rightarrow$\ref{item:dissineq3bnd}: A straightforward continuity argument yields this implication.\\
\ref{item:dissineq3bnd}$\Rightarrow$\ref{item:dissineq3}:
This follows from $\W\subset\dom(A\&B)$.\\
It remains to prove the equivalence between \ref{item:dissineq4} and \ref{item:dissineq4bnd}. Since $\W\subset\dom(A\&B)$, we have that \ref{item:dissineq4bnd} implies \ref{item:dissineq4}. On the other hand, assume that \ref{item:dissineq4} holds. Then, by using $P=\widetilde{P}\in\Lb(\X)$,
there exists a~Hilbert space $\Z$ and some $K\&L\in \Lb(\V,\Z)$ with dense range, such that, for all $\spvek{x}u\in\W$,
\begin{equation}
    \label{eq:Lurebnd}
    2\Re\scprod*{{P}x}{A\&B \spvek{x}u}_{\X}+s\left({C\&D\spvek{x}u},u\right)=\norm{K\&L\spvek{x}u}_\Z^2.
    \end{equation}
Then boundedness of $P$ yields that there exists some $M>0$, such that, for all $\spvek{x}u\in\W$,
\[\norm{K\&L\spvek{x}u}_\Z^2\leq \norm{\spvek{x}u}_{\dom(A\&B)}^2.\]
The density of $\W$ in $\dom(A\&B)$ (see Lemma~\ref{eq:Vcore}) implies that $K\&L$ extends to a bounded operator from $\dom(A\&B)$ to $\Z$. Clearly, its range is dense as well. Then a~continuity argument yields that \eqref{eq:Lurebnd} holds for all $\spvek{x}u\in\dom(A\&B)$. Thus, \ref{item:dissineq4bnd} holds, and the proof is finished.
\end{proof}
\begin{remark}[Bounded storage function]
Assume the framework of Theorem~\ref{thm:mainbnd}. If \eqref{eq:ODEnode}
is dissipative with respect to the supply rate functional $s$ and the bounded storage function $\Sc:\X\to\R$, then, as shown in Theorem~\ref{thm:mainbnd}, the operator $K\&L$ in \ref{item:dissineq4bnd} is bounded from $\dom(A\&B)$ to $\Z$. This yields that
\begin{equation}\left[\begin{smallmatrix}A\&B\\[1mm]C\&D\\[0.5mm]K\&L\end{smallmatrix}\right]\label{eq:extsystnode}\end{equation}
is a~system node on $(\X,\U,\Y\times\Z)$.
\end{remark}
A comparison between Theorem~\ref{thm:main} and Theorem~\ref{thm:mainbnd} shows that the situation simplifies drastically when the storage function is bounded. By contrast, the additional benefit of well-posedness of the system (i.e., the existence of a constant $c_T$ for every $T$ such that all classical trajectories satisfy \eqref{eq:wp}) is comparatively minor. A small apparent advantage of well-posedness is that, by standard density and continuity arguments, it guarantees the existence of a generalized trajectory $(x(\cdot),u(\cdot),y(\cdot))$ of \eqref{eq:ODEnode} on $[0,T]$ for every $x_0\in\X$ and every $u(\cdot)\in \Lp{2}([0,T];\U)$. In the case of a bounded storage function, such a trajectory satisfies the dissipation inequality, as established in Theorem~\ref{thm:mainbnd}. 

Finally in this section, we show that Proposition~\ref{prop:domScont} likewise simplifies in the case of bounded storage functions. The validity of the third statement in Proposition~\ref{prop:domScont} need not be restated, as it is already included in Theorem~\ref{thm:mainbnd}. Since boundedness of the form implies $\dom(\Sc)=\X$, the following statement is an immediate consequence of Proposition~\ref{prop:domScont}, and we therefore omit a detailed proof.

\begin{proposition}\label{prop:domScontbnd}
Assume the hypotheses of Theorem~\ref{thm:mainbnd}, and in addition that \eqref{eq:ODEnode} is dissipative with respect to the supply rate functional $s$ with a bounded storage function $\Sc:\X\to\R$. Then, for every generalized trajectory $(x(\cdot),u(\cdot),y(\cdot))$ of \eqref{eq:ODEnode} on $[0,T]$, the following statements hold:
\begin{enumerate}[label=(\alph{*})]
\item \eqref{eq:dissL2} and \eqref{eq:dissrate} hold for $K\&L\in\Lb(\dom(A\&B);\Z)$ as in Theorem~\ref{thm:mainbnd}\,\ref{item:dissineq4bnd}.
\item $\Sc(x(\cdot))\in\Wkp{1,1}([0,T])$.
\end{enumerate}
\end{proposition}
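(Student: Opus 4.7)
The plan is to deduce Proposition~\ref{prop:domScontbnd} as a direct specialization of Proposition~\ref{prop:domScont}. The crucial observation is that when $\Sc$ is bounded, we have $\dom(\Sc)=\X$ with the form norm equivalent to $\norm{\cdot}_\X$. Indeed, the operator $P\in\Lb(\X)$ induced by $\Sc$ is everywhere defined, and for any $\lambda\in\R_{\ge 0}$ for which $x\mapsto(\Sc(x)+\lambda\norm{x}_\X^2)^{1/2}$ in \eqref{eq:formnorm} defines a norm on $\dom(\Sc)$, boundedness of $\Sc$ immediately implies that this norm is equivalent to $\norm{\cdot}_\X$.

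From this equivalence, every generalized trajectory $(x(\cdot),u(\cdot),y(\cdot))$ of \eqref{eq:ODEnode} on $[0,T]$ automatically satisfies $x(\cdot)\in\conC([0,T];\dom(\Sc))$, since the latter space coincides with $\conC([0,T];\X)$. Hence the sole regularity hypothesis of Proposition~\ref{prop:domScont} is fulfilled for every generalized trajectory without any additional assumption, and both of its conclusions transfer directly: part (a) there yields \eqref{eq:dissL2} and \eqref{eq:dissrate}, and part (b) yields $\Sc(x(\cdot))\in\Wkp{1,1}([0,T])$.

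The only point requiring a brief comment concerns the operator $K\&L$: in Proposition~\ref{prop:domScont} it is the operator in $\Lb(\W,\Z)$ from Theorem~\ref{thm:main}\ref{item:dissineq4}, whereas Proposition~\ref{prop:domScontbnd} asserts the claim for the extended operator $K\&L\in\Lb(\dom(A\&B),\Z)$ from Theorem~\ref{thm:mainbnd}\ref{item:dissineq4bnd}. However, as is apparent from the proof of Theorem~\ref{thm:mainbnd} (implication \ref{item:dissineq4}$\Rightarrow$\ref{item:dissineq4bnd}), the latter is precisely the bounded extension of the former, and both operators agree on $\W$. Since the quantity $K\&L\,\spvek{x(\cdot)}{u(\cdot)}$ in Proposition~\ref{prop:domScont} is obtained as an $\Lp{2}$-limit of its values on convolutions of classical trajectories, which lie in $\W$, there is no ambiguity in interpreting the resulting formula for the extended $K\&L$. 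I do not anticipate a substantive obstacle here; the whole point of the present proposition is that boundedness of the form trivialises the extra regularity hypothesis of Proposition~\ref{prop:domScont}, which is precisely why the author omits a detailed proof.
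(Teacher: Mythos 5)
Your proposal is correct and follows exactly the route the paper indicates: the paper explicitly states that, since boundedness of the form gives $\dom(\Sc)=\X$, the proposition is an immediate consequence of Proposition~\ref{prop:domScont}, and omits a detailed proof. Your additional remark on identifying the $K\&L\in\Lb(\W,\Z)$ of Theorem~\ref{thm:main}\,\ref{item:dissineq4} with its bounded extension to $\dom(A\&B)$ from Theorem~\ref{thm:mainbnd}\,\ref{item:dissineq4bnd} is a sensible extra precaution, consistent with the proof of that theorem.
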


%Another minor observation is that the well-posedness of \eqref{eq:ODEnode} carries over to the system corresponding to \eqref{eq:extsystnode} provided that the .

\section{Linear-quadratic optimal control}\label{sec:lqr}
Given 
a~system node $\sbvek{A\& B}{C\& D}$ on $(\X,\U,\Y)$, and, for $Q=Q\adjun\in\Lb(\Y)$, $S\in\Lb(\U,\Y)$, $R=R\adjun\in\Lb(\U)$, let the functional $s:\Y\times \U\to \R$ be as in \eqref{eq:supply_inf}. Loosely speaking, we consider the optimal control problem for $x_0\in\X$:
\begin{multline}\label{eq:optcontr}
\text{Minimize }\;\mathcal{J}\bigl(y(\cdot),u(\cdot)\bigr):=\int_0^\infty s\bigl(y(t),u(t)\bigr)\dx[t]\\
\text{subject to }\;\spvek{\dot{x}(t)}{y(t)}= \sbvek{A\&B\\[-1mm]}{C\&D}\,\spvek{x(t)}{u(t)},\;x(0)=x_0.
\end{multline}
We study the optimal control problem under a set of additional assumptions that effectively restrict attention to bounded storage functions. There is certainly room to relax these hypotheses; however, the main focus of this article is the dissipation inequality, and we therefore do not treat the optimal control problem in its most general form.

Besides a stabilizability assumption on the system (introduced and discussed in due course), we restrict ourselves to nonnegative cost functionals. That is, we assume that
\begin{equation}
    \forall \,u\in\U,\,y\in\Y:\quad s\bigl(y,u\bigr)\geq0.\label{eq:spos}
\end{equation}
Note that the latter is equivalent to nonnegativity of the operator $\sbmat{W}{S}{S\adjun}{R}\in\Lb(\Y\times\U)$.\\

In fact, our main interest lies not in the optimal control itself, but rather in the 
so-called \emph{value function}, which is defined by $\mathrm{Val}:\X\to [0,\infty]$ with
\begin{equation}
    \mathrm{Val}(x_0) =
\inf\setdef*{\mathcal{J}\bigl(y(\cdot),u(\cdot)\bigr)\dx[t]}{
    \parbox{0.44\linewidth}{
      $(x(\cdot),u(\cdot),y(\cdot))$ is a gen.\ trajec-\\[0.3em]tory of \eqref{eq:ODEnode} on $\R_{\ge0}$
      with $x(0)=x_0$
    }
  }.\label{eq:valfun}
\end{equation}
Our goal here is to show that the value function constitutes a storage function with respect to the supply rate functional $s$.
We restrict our attention to optimal control problem under the further assumption of {\em state-feedback stabilizability}. 
This requires the notion of \emph{admissible state feedback} (see \cite[Def.~7.3.2]{Staf05} for details), which is only vaguely explained here. 
An admissible state feedback for \eqref{eq:ODEnode} is an operator $F\&G\in\Lb(\dom(A\&B);\U)$ such that, after appending it as an extra output
\begin{equation}
\left(\begin{smallmatrix}\dot{x}(t)\\y(t)\\z(t)\end{smallmatrix}\right)=\left[\begin{smallmatrix}A\&B\\[1mm]C\&D\\[1mm]F\&G\end{smallmatrix}\right]\spvek{x(t)}{u(t)},\label{eq:extsysnode}
\end{equation}
the feedback $u(\cdot)=z(\cdot)+v(\cdot)$ yields again a~system that is described by a~system node (with input $v(\cdot)$ and output $\spvek{y(\cdot)}{z(\cdot)}$). 
In words: the feedback reads $(x,u)$ through $F\&G$ and feeds it back into the input. In particular, criteria for admissibility in terms of the system node are presented in \cite[Sec.~7.3]{Staf05}. In particular, admissibility of a~state feedback is related to invertibility of the operator
\[M=\left[\begin{smallmatrix}\id_\X&0\\0&\id_\U\end{smallmatrix}\right]-\left[\begin{smallmatrix}0\\F\&G\end{smallmatrix}\right]\]
as a~mapping from $\dom(A\&B)$ to $\dom(A_K\&B_K)$, and
\[\left[\begin{smallmatrix}A_K\&B_K\\[1mm]C_K\&D_K\\[1mm]F_K\&G_K\end{smallmatrix}\right]:=\left[\begin{smallmatrix}A\&B\\[1mm]C\&D\\[1mm]F\&G\end{smallmatrix}\right]M^{-1}\]
is a~system node (the one of the closed-loop system). The trajectories $(x(\cdot),v(\cdot),y(\cdot))$ of the system defined by the latter system node are related to those of \eqref{eq:extsysnode} via  $u(\cdot)=z(\cdot)+v(\cdot)$; in particular, the state and output trajectories coincide in both descriptions.

\begin{definition}[State-feedback stabilizable]\label{def:stabilizable}
    Let $\sbvek{A\& B}{C\& D}$ be a~system node on $(\X,\U,\Y)$. 
    Then the system \eqref{eq:ODEnode} is said to be {\em state-feedback stabilizable}, if there exists an admissible state feedback $F\&G\in\Lb(\dom(A\&B))$, such that the resulting feedback system \eqref{eq:extsysnode}, $u(\cdot)=z(\cdot)+v(\cdot)$, is infinite-time well-posed.
\end{definition}
Let us briefly discuss the finite input-output cost condition in more detail.
\begin{lemma}\label{lem:iobnd}
Let $\sbvek{A\& B}{C\& D}$ be a~system node on $(\X,\U,\Y)$, such that the system \eqref{eq:ODEnode} is state-feedback stabilizable.
Then, for all $x_0\in\X$ there exists a~generalized trajectory 
$(x(\cdot),u(\cdot),y(\cdot))$ of \eqref{eq:ODEnode} on $\R_{\ge0}$ with $x(0)=x_0$, $u(\cdot)\in\Lp{2}(\R_{\ge0};\U)$ and $y(\cdot)\in\Lp{2}(\R_{\ge0};\Y)$.
Further, there exists some $M>0$ with the following property: 
\begin{multline*}
    \forall\,x_0\in\X\text{ with }\norm{x_0}_\X\leq1:\\
    \exists\, \text{gen.\ trajectory 
$(x(\cdot),u(\cdot),y(\cdot))$ of \eqref{eq:ODEnode} on $\R_{\ge0}$}\\\text{ with }
x(0)=x_0\,\wedge\,\norm*{\spvek{y(\cdot)}{u(\cdot)}}_{\Lp{2}(\R_{\ge0};\Y\times\U)}\leq M.
\end{multline*}
\end{lemma}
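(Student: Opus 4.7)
\emph{Proof plan.} The strategy is to feed the stabilizability hypothesis straight into the conclusion: take the closed-loop system with zero external input and transfer the resulting trajectory back to \eqref{eq:ODEnode}, reading off the uniform $\Lp{2}$ bound from infinite-time well-posedness.

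Fix an admissible state feedback $F\&G\in\Lb(\dom(A\&B);\U)$ from Definition~\ref{def:stabilizable}, with closed-loop system node $\sbvek{A_K\&B_K}{C_K\&D_K}$ and extra output $F_K\&G_K$; by hypothesis the closed-loop system (state $x(\cdot)$, new input $v(\cdot)$, outputs $y(\cdot),z(\cdot)$) is infinite-time well-posed. The identity $u(\cdot) = z(\cdot) + v(\cdot)$ recalled in the construction preceding Definition~\ref{def:stabilizable} means that every generalized closed-loop trajectory yields a generalized trajectory of \eqref{eq:ODEnode} with the same state and $y$-component. Infinite-time well-posedness furnishes a constant $c>0$, independent of $T$, with
$$
\|x(T)\|_\X + \left\|\spvek{y(\cdot)}{z(\cdot)}\right\|_{\Lp{2}([0,T];\Y\times\U)} \le c\,\bigl(\|x(0)\|_\X + \|v(\cdot)\|_{\Lp{2}([0,T];\U)}\bigr)
$$
for every closed-loop generalized trajectory.

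For arbitrary $x_0 \in \X$, I would then construct a generalized closed-loop trajectory with $v(\cdot)\equiv 0$ and $x(0)=x_0$ on all of $\R_{\ge 0}$: approximate $x_0$ by initial data in the domain of the closed-loop main operator, apply Proposition~\ref{prop:solex} on each $[0,T]$ to obtain a classical trajectory, and use the displayed bound (with $v\equiv 0$) to pass to the limit in the approximation and to patch the trajectories over a nested sequence of intervals into one on $\R_{\ge 0}$. The $T$-independence of $c$ immediately upgrades the bound to $y(\cdot)\in\Lp{2}(\R_{\ge0};\Y)$, $z(\cdot)\in\Lp{2}(\R_{\ge0};\U)$ with $\|y(\cdot)\|_{\Lp{2}}^2+\|z(\cdot)\|_{\Lp{2}}^2 \le c^2\|x_0\|_\X^2$. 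Setting $u(\cdot):= z(\cdot)$ yields the desired generalized trajectory of \eqref{eq:ODEnode} on $\R_{\ge 0}$, and for $\|x_0\|_\X\le 1$ the estimate gives $\left\|\spvek{y(\cdot)}{u(\cdot)}\right\|_{\Lp{2}(\R_{\ge 0};\Y\times\U)}\le c$, so $M:=c$ works.

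The main obstacle is not algebraic but administrative: I must verify that the finite-interval closed-loop trajectories really do assemble consistently into one on the unbounded interval $\R_{\ge 0}$, and that the correspondence $u=z+v$ is preserved by the $\Lp{2}$-limits appearing in the definition of generalized trajectory. Both reduce to standard density and continuity arguments once Definition~\ref{def:stabilizable} and the closed-loop system node construction are in hand; no new idea beyond what Section~\ref{sec:sysnodes} provides should be required.
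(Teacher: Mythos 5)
Your proposal is correct and follows essentially the same route as the paper: drive the closed-loop system with $v(\cdot)\equiv 0$, use infinite-time well-posedness to obtain the $T$-independent $\Lp{2}$ bound on $\spvek{y(\cdot)}{z(\cdot)}$, and transfer back to \eqref{eq:ODEnode} via $u(\cdot)=z(\cdot)$. The extra care you devote to assembling the trajectory on $\R_{\ge0}$ is a reasonable elaboration of a step the paper treats as implicit.
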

\begin{proof}
Consider an infinite-time stabilizing state feedback $F\&G\in\Lb(\dom(A\&B))$. Let $x_0\in\X$. Now driving the closed-loop system
\eqref{eq:extsysnode}, $u(\cdot)=z(\cdot)+v(\cdot)$, with $v(\cdot)=0$,
we obtain that there exists a~solution \[\left(x(\cdot),0,\spvek{y(\cdot)}{z(\cdot)}\right)\text{ with $x(0)=x_0$ and $\spvek{y(\cdot)}{z(\cdot)}\in\Lp{2}(\R_{\ge0};\Y\times\U)$}.\] 
 Since
$(x(\cdot),{z(\cdot)},{y(\cdot)})$ is then a~generalized solution of \eqref{eq:ODEnode} on $\R_{\ge0}$, the first statement holds. Further, infinite-time well-posedness of the closed-loop yields that the closed-loop system (again driven by $v(\cdot)$) has the property that, for all $x_0\in\X$, the output fulfills
\[\norm*{\spvek{y(\cdot)}{z(\cdot)}}_{\Lp{2}(\R_{\ge0};\Y\times\U)}\leq M \norm{x_0}_\X.\]
Now invoking that, by $v(\cdot)=0$, we have $z(\cdot)=u(\cdot)$, we obtain that the desired estimate holds.
\end{proof}

%Note that the notion of infimizability is closely related to {\em stabilizability} (see \cite[Chap.~8]{Staf05}), which, among other things, refers to the existence of a state feedback ensuring that the output belongs to $\Lp{2}(\R_{\ge0};\Y)$ whenever the input lies in $\Lp{2}(\R_{\ge0};\U)$. 
%The present notions are, however, more general, as they neither involve feedback operators nor are they independent of the specific choice of the cost functional.\\
Combining Lemma~\ref{lem:iobnd} with the boundedness of $Q$, $S$, and $R$ in the cost functional $\mathcal{J}$ yields that the value function is finite for every $x_0\in\X$, i.e., 
$\mathrm{Val}(x_0)<\infty$ for all $x_0\in\X$.

Before presenting our main result on the connection between value functions and storage functions, we briefly discuss the assumption of state-feedback stabilizability.

\begin{remark}[State-feedback stabilizable]\
    \begin{enumerate}[label=(\alph{*})]
        \item Our notion in Definition~\ref{def:stabilizable} differs slightly from the classical
finite-dimensional one (see, e.g., \cite{TrentelmanStoorvogelHautus2001}), where
stabilizability means that there exists a (static) state feedback $u(t)=Fx(t)+v(t)$
such that the closed-loop matrix $A+BF$ is Hurwitz (equivalently, all unstable modes
are controllable).

By contrast, Definition~\ref{def:stabilizable} employs an \emph{admissible} state
feedback of the form $F\&G$ and requires that the closed-loop interconnection defines
an \emph{infinite-time well-posed} system node. In particular, this notion permits semisimple modes on the imaginary axis that are simultaneously uncontrollable and unobservable. Such modes are excluded by the standard finite-dimensional definition.
\item If the system is internally passive, then the output feedback $u(t)=-y(t)+v(t)$ renders the closed-loop system infinite-time well-posed; see \cite[Thm.~4.2]{HastirPaunonen2025}. In particular, every internally passive system is state-feedback stabilizable (take the admissible feedback $F\&G=-\,(C\&D)$ so that $u(\cdot)=-y(\cdot)+v(\cdot)$).

As a consequence, the class of port-Hamiltonian systems governed by system nodes (as treated in \cite{PhilippReisSchaller2025}) is state-feedback stabilizable. %More generally, any system that admits an alternative output mapping which turns it into a port-Hamiltonian system is likewise state-feedback stabilizable by the same argument (choose $u(\cdot)=-\tilde y(\cdot)+v(\cdot)$ for the alter output $\tilde y$).
    \end{enumerate}
\end{remark}

Next we present our main result on the optimal control problem \eqref{eq:optcontr}.
\begin{theorem}\label{thm:costfun}
   Let $\sbvek{A\& B}{C\& D}$ be a~system node on $(\X,\U,\Y)$, let $Q=Q\adjun\in\Lb(\Y)$, $S\in\Lb(\U,\Y)$, $R=R\adjun\in\Lb(\U)$. Further assume that $s:\Y\times \U\to \R$ as in \eqref{eq:supply_inf} fulfills \eqref{eq:spos}, and the system \eqref{eq:ODEnode} is state-feedback stabilizable. %Further, for the value function $\mathrm{Val}:\X\to[0,\infty)$ define
%   \begin{align*}
 %      \Sc:\quad \X\supset\dom(\Sc)&\to\R,\\
 %      x_0&\mapsto\mathrm{Val}(x_0),\\
 %      \dom(\Sc)&=\setdef{x_0\in\X}{\mathrm{Val}(x_0)<\infty}.
  % \end{align*}
   Then the following holds:
\begin{enumerate}[label=(\alph{*})]
\item The value function $\mathrm{Val}:\X\to\R$ is a~nonnegative and bounded quadratic form.
\item \eqref{eq:ODEnode} is dissipative with respect to the supply rate functional $s$ and storage function $\mathrm{Val}$. 
\item For any further nonnegative $\Sc:\X\supset\dom(\Sc)\to\R$, such that \eqref{eq:ODEnode} is dissipative with respect to the supply rate functional $s$ and storage function $\Sc$, it holds that $\Sc$ is bounded (and thus $\dom(\Sc)=\X$), and
\[\forall\,x_0\in\X:\quad \Sc(x_0)\leq \mathrm{Val}(x_0).\]
\end{enumerate}
\end{theorem}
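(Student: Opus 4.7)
The plan is to prove (a), (b), (c) in order, with (c) being the technical crux.

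For (a), nonnegativity of $\mathrm{Val}$ is immediate from $s\ge 0$. Boundedness follows by combining Lemma~\ref{lem:iobnd} with boundedness of $Q,S,R$: for $\norm{x_0}_\X\le 1$ the lemma produces a generalized trajectory whose input-output pair has $\Lp{2}$-norm at most $M$, so $\mathcal{J}\le CM^2$; scaling then gives $\mathrm{Val}(x_0)\le CM^2\norm{x_0}_\X^2$. For the quadratic form property, homogeneity $\mathrm{Val}(\alpha x_0)=\abs{\alpha}^2\mathrm{Val}(x_0)$ is immediate from linearity of trajectories. The parallelogram identity is obtained by superposing $\epsilon$-optimal trajectories $(x_j,u_j,y_j)$ from $x_j$ ($j=0,1$): since
\[
\mathcal{J}(y_0\pm y_1,u_0\pm u_1)=\mathcal{J}(y_0,u_0)+\mathcal{J}(y_1,u_1)\pm 2\Re\int_0^\infty\scprod*{\spvek{y_0}{u_0}}{\sbmat{Q}{S}{S\adjun}{R}\spvek{y_1}{u_1}}\dx[t],
\]
the cross terms cancel on summation, yielding $\mathrm{Val}(x_0+x_1)+\mathrm{Val}(x_0-x_1)\le 2\mathrm{Val}(x_0)+2\mathrm{Val}(x_1)$; applying the same estimate to the pair $(x_0+x_1)/2,(x_0-x_1)/2$ together with homogeneity produces the reverse inequality.

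For (b), the dissipation inequality for $\mathrm{Val}$ follows from the dynamic programming principle: concatenating a trajectory $(x,u,y)$ of \eqref{eq:ODEnode} on $[0,T]$ starting at $x_0$ with any generalized trajectory on $\R_{\ge 0}$ starting at $x(T)$ yields a generalized trajectory from $x_0$ on $\R_{\ge 0}$ (after suitable mollification at the junction, as in the proof of Theorem~\ref{thm:main}). Taking the infimum over the extension gives
\[
\mathrm{Val}(x_0)\le \int_0^T s(y(t),u(t))\dx[t]+\mathrm{Val}(x(T)),
\]
which is exactly the dissipation inequality with storage $\mathrm{Val}$.

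For (c), fix a nonnegative storage $\Sc$ and $x_0\in\dom(\Sc)$. Pick an $\epsilon$-optimal generalized trajectory $(x^\epsilon,u^\epsilon,y^\epsilon)$ starting at $x_0$; by the mollification argument from the proof of Theorem~\ref{thm:main} we may assume classical regularity on $[0,T]$ for every $T$. The dissipation inequality for $\Sc$ yields
\[
\Sc(x_0)\le \Sc(x^\epsilon(T))+\int_0^T s(y^\epsilon,u^\epsilon)\dx[t]\le \Sc(x^\epsilon(T))+\mathrm{Val}(x_0)+\epsilon.
\]
The decisive step is $\liminf_{T\to\infty}\Sc(x^\epsilon(T))=0$. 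One knows $\mathrm{Val}(x^\epsilon(T))\le \int_T^\infty s(y^\epsilon,u^\epsilon)\dx[t]\to 0$ (finiteness of the cost forces vanishing tails), and I expect the \emph{main obstacle} to be transferring this decay of $\mathrm{Val}$ to decay of $\Sc$ without presupposing $\Sc\le\mathrm{Val}$. I would attempt this by applying Proposition~\ref{prop:domS} to the shifted trajectory starting at $x^\epsilon(T)$, bounding $\Sc(x^\epsilon(T))$ by a constant multiple of $\norm{x^\epsilon(T)}_\V^2$, and controlling the latter via Lemma~\ref{lem:iobnd} applied at the new initial state. Once $\Sc(x_0)\le\mathrm{Val}(x_0)$ is established on $\dom(\Sc)$, boundedness of $\mathrm{Val}$ produces $\abs{\Sc(x)}\le C\norm{x}_\X^2$ there; closedness of $\Sc$ then forces $\dom(\Sc)=\X$, since under such a two-sided bound the form norm becomes equivalent to the ambient $\X$-norm, making $\dom(\Sc)$ both dense and closed in $\X$.
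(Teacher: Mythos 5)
Parts (a) and (b) of your proposal follow the paper's proof essentially verbatim: homogeneity and the parallelogram identity are obtained by superposing $\varepsilon$-optimal trajectories (the paper then cites Kurepa's theorem to conclude that $\mathrm{Val}$ is a quadratic form), boundedness comes from Lemma~\ref{lem:iobnd}, and the dissipation inequality for $\mathrm{Val}$ follows by concatenating a trajectory on $[0,T]$ with an $\varepsilon$-optimal continuation from $x(T)$. These parts are fine and match the paper.

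Part (c) contains a genuine gap, and you have located it precisely: everything hinges on $\liminf_{T\to\infty}\Sc(x^\varepsilon(T))=0$ along an $\varepsilon$-optimal trajectory, which is exactly the step you leave as a sketch. The repair you outline cannot deliver it. Lemma~\ref{lem:iobnd} only bounds $\norm{\spvek{y(\cdot)}{u(\cdot)}}_{\Lp{2}(\R_{\ge0};\Y\times\U)}$ uniformly over initial states in the unit ball; it yields no decay of $\norm{x^\varepsilon(T)}_{\X}$, let alone of $\norm{x^\varepsilon(T)}_{\V}$, and for a generalized trajectory the state $x^\varepsilon(T)$ need not even lie in $\V$, so Proposition~\ref{prop:domS} is not applicable pointwise in time. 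Moreover, no argument of this shape can succeed under the stated hypotheses alone: take $Q=0$, $S=0$, $R=0$ (so $s\equiv0$, \eqref{eq:spos} holds, and $\mathrm{Val}\equiv0$) and the system $\dot x=0$ on $\X=\C$ with trivial input and zero output, which is infinite-time well-posed and hence state-feedback stabilizable; then $\Sc=\norm{\cdot}_{\X}^2$ is a nonnegative storage function with $\Sc(x(T))\equiv\norm{x_0}_{\X}^2$ for all $T$, so the term you need to kill does not vanish. For comparison, the paper does not attempt to prove decay of $\Sc(x(T))$ at all: after rearranging the dissipation inequality to $\Sc(x_0)\le\Sc(x(T))+\int_0^T s\bigl(y(t),u(t)\bigr)\dx[t]$, it discards the term $\Sc(x(T))$ ``since $\Sc\ge0$'' --- a step that in fact requires $\Sc(x(T))\le0$. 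So your instinct that something substantive is missing at this point is correct; your proposal and the paper's proof diverge exactly where neither closes the argument. Your concluding deduction that $\dom(\Sc)=\X$ from the two-sided bound together with closedness is sound, but it rests on the inequality $\Sc\le\mathrm{Val}$ that remains unproven.
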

\begin{proof}\
For brevity, we adopt the following convention: a ``generalized trajectory'' means a generalized trajectory of \eqref{eq:ODEnode} on $\R_{\ge0}$.
\begin{enumerate}[label=(\alph{*})]
\item Nonnegativity of $\Sc$ follows directly from nonnegativity of $s(\cdot,\cdot)$. Further, it follows from Lemma~\ref{lem:iobnd} that $\mathrm{Val}(x_0)\in\R$ for all  $x_0\in\X$.\\
The statement that $\Sc$ is a~bounded quadratic form  will be proven in several steps.\\
{\em Step~1}: We show that $\mathrm{Val}(\lambda x_0)=|\lambda|^2\mathrm{Val}(x_0)$ for all $x_0\in\X$, $\lambda\in\C$. The statement is clear for $\lambda=0$, so we can assume that $\lambda\neq0$ in the following.\\
Let $x_0\in\X$ and $\varepsilon>0$. Then, by Lemma~\ref{lem:iobnd} and the definition of the value function, there exists a~generalized trajectory $(x(\cdot),u(\cdot),y(\cdot))$ with $x(0)=x_0$ and 
\[\mathcal{J}\bigl(y(\cdot),u(\cdot)\bigr)\leq \mathrm{Val}(x_0)+\frac{\varepsilon}{|\lambda|^2}.\]
Using that $(\lambda x(\cdot),\lambda u(\cdot),\lambda y(\cdot))$ is a~generalized trajectory with $\lambda x(0)=\lambda x_0$, we obtain, by 
quadraticity of $s$,
\begin{multline*}
 \mathrm{Val}(\lambda x_0)\leq \mathcal{J}(\lambda x(\cdot),\lambda u(\cdot))=|\lambda|^2\mathcal{J}( y(\cdot),u(\cdot))
  = |\lambda|^2\,\mathcal{J}(y(\cdot),u(\cdot))\\
  \leq |\lambda|^2\left(\mathrm{Val}(x_0)+\frac{\varepsilon}{|\lambda|^2}\right)
  =|\lambda|^2\mathrm{Val}(x_0)+\varepsilon.
\end{multline*}
Since the latter holds for all $\varepsilon>0$, we have
$\mathrm{Val}(\lambda x_0)\leq |\lambda|^2 \Sc(x_0)$.
Applying this with $x_0$ replaced by $\lambda x_0$ and with the factor $1/\lambda$ gives the reverse inequality, hence $\mathrm{Val}(\lambda x_0)=|\lambda|^2\mathrm{Val}(x_0)$.\\
{\em Step~2}: We show the parallelogram identity
\[
  \forall\,x_{0,1},x_{0,2}\in\X:\quad \mathrm{Val}(x_{0,1}+x_{0,2})+\mathrm{Val}(x_{0,1}-x_{0,2})
  =2\mathrm{Val}(x_{0,1})+2\mathrm{Val}(x_{0,2})
  .
\]
Let $x_{0,1},x_{0,2}\in\X$ and $\varepsilon>0$. Then there exist 
generalized trajectories $(x_i(\cdot),u_i(\cdot),y_i(\cdot))$ with $x_i(0)=x_{0,i}$ such that
\[
\mathcal{J}\bigl(y_i(\cdot),u_i(\cdot)\bigr)\le\mathrm{Val}(x_{0,i})+\frac{\varepsilon}{4},
  \qquad i=1,2.
\]
Clearly, the sums and differences
\[
  (x_\pm(\cdot),u_\pm(\cdot),y_\pm(\cdot)):=\bigl(x_1(\cdot)\pm x_2(\cdot),\;u_1(\cdot)\pm u_2(\cdot),\;y_1(\cdot)\pm y_2(\cdot)\bigr)
\]
are generalized trajectories with initial states $x_{0,1}\pm x_{0,2}$. Since $s$ is quadratic,
we have the integrand-level parallelogram identity, and hence
\[
  \mathcal{J}\bigl(y_+,u_+\bigr)+\mathcal{J}\bigl(y_-,u_-\bigr)
=2\mathcal{J}\bigl(y_1,u_1\bigr)+2\mathcal{J}\bigl(y_2,u_2\bigr).
\]
Therefore,
\[
  \mathrm{Val}(x_{0,1}+x_{0,2})+\mathrm{Val}(x_{0,1}-x_{0,2})
  \le2\,\mathrm{Val}(x_{0,1})+2\mathrm{Val}(x_{0,2})+\varepsilon.
\]
Since $\varepsilon>0$ is arbitrary, we obtain
\[
  \mathrm{Val}(x_{0,1}+x_{0,2})+\mathrm{Val}(x_{0,1}-x_{0,2})
  \le2\,\mathrm{Val}(x_{0,1})+2\mathrm{Val}(x_{0,2}).
\]
For the reverse inequality, set $\tilde x_{0,1}:=\tfrac12(x_{0,1}+x_{0,2})$ and 
$\tilde x_{0,2}:=\tfrac12(x_{0,1}-x_{0,2})$. Applying the previous estimate to 
$\tilde x_{0,1},\tilde x_{0,2}$ and using Step~1 with $\lambda=\tfrac12$, we get
\begin{multline*}
  2\mathrm{Val}(x_{0,1})+2\mathrm{Val}(x_{0,2})
  =4\mathrm{Val}(\tilde x_{0,1})+4\,\mathrm{Val}(\tilde x_{0,2})\\
  \le\mathrm{Val}(x_{0,1}+x_{0,2})+\mathrm{Val}(x_{0,1}-x_{0,2}),
\end{multline*}
which proves the parallelogram identity.\\
{\em Step~3:} We conclude that $\mathrm{Val}$ is a quadratic form. Indeed, Steps~1-2 verify the hypotheses of \cite[Thm.~1]{Kurepa1965QSF}; hence, by that theorem, \eqref{eq:hsesq} is a~sesquilinear form. Thus $\mathrm{Val}:\X\to\R$ is a quadratic form.\\
{\em Step~4:} We show that $\mathrm{Val}$ is a bounded quadratic form. It can be concluded from  Lemma~\ref{lem:iobnd} that, for $M$ as in that lemma, 
\[\forall\, x_0\in\X\text{ with }\norm{x_0}_\X\leq 1:\quad\mathrm{Val}(x_0)\leq M\,\norm*{\sbmat{Q}{S}{S\adjun}{R}}.\] Hence, by using that $\mathrm{Val}$ is quadratic, we have \[\forall\, x_0\in\X:\quad\mathrm{Val}(x_0)\leq M\,\norm*{\sbmat{Q}{S}{S\adjun}{R}}\,\norm{x_0}_\X^2.\]
\item 
Assume that $(x(\cdot),u(\cdot),y(\cdot))$ is a~classical trajectory on $[0,T]$, $T>0$. 
We essentially follow the argument of the corresponding finite-dimensional proof in \cite{Will71a}. 
For $\varepsilon>0$, let $({x}_1(\cdot),{u}_1(\cdot),{y}_1(\cdot))$
be a generalized solution with ${x}_1(0)=x(T)$ and 
\[
\mathcal{J}\bigl({y}_1(\cdot),{u}_1(\cdot)\bigr)\le\mathrm{Val}(x(T))+{\varepsilon}.
\]
Now consider the \emph{concatenation} of the two trajectories, defined by
\[
(\tilde x(t),\tilde u(t),\tilde y(t))=
\begin{cases}
(x(t),u(t),y(t)), & 0\le t\le T,\\
\bigl(x_1(t-T),u_1(t-T),y_1(t-T)\bigr), & t> T,
\end{cases}
\]
Then $(\tilde x(\cdot),\tilde u(\cdot),\tilde y(\cdot))$ is a~generalized trajectory with $\tilde u(\cdot)\in\Lp{2}(\R_{\ge0};\U)$ and $\tilde y(\cdot)\in\Lp{2}(\R_{\ge0};\Y)$, whence
\begin{align*}
    \mathrm{Val}(x(0))&=\mathrm{Val}(\tilde{x}(0))\leq \mathcal{J}(\tilde{y}(\cdot),\tilde{u}(\cdot))\\&=\int_0^Ts\bigl(\tilde{y}(t),\tilde{u}(t)\bigr)\dx[t]+\int_0^\infty s\bigl(\tilde{y}(t+T),\tilde{u}(t+T)\bigr)\dx[t]\\
    %&=\int_0^Ts\bigl({y}(t),{u}(t)\bigr)\dx[t]+\int_0^\infty s\bigl({y}_1(t),{u}_1(t)\bigr)\dx[t]\\
        &=\int_0^Ts\bigl({y}(t),{u}(t)\bigr)\dx[t]+\mathcal{J}\bigl({y}_1(\cdot),{u}_1(\cdot)\bigr)\dx[t]\\
                &\leq\int_0^Ts\bigl({y}(t),{u}(t)\bigr)\dx[t]+\mathrm{Val}(x(T))+\varepsilon.
\end{align*}
Since this holds for all $\varepsilon>0$, we can conclude that the dissipation inequality holds.
\item Let $x_0\in\V$, with $\V$ as in \eqref{eq:Vdef}. For every classical trajectory
$(x(\cdot),u(\cdot),y(\cdot))$ on $\R_{\ge0}$ with $x(0)=x_0$ we have, for all $T>0$,
\[
  0 \;\le\; \Sc(x(T))-\Sc(x_0) \;+\; \int_0^T s\bigl(y(t),u(t)\bigr)\,\dx[t].
\]
Since $\Sc\ge0$, letting $T\to\infty$, we obtain
\[
  \Sc(x_0)\;\le\;\int_0^\infty s\bigl(y(t),u(t)\bigr)\,\dx[t].
\]
Taking the infimum over all trajectories with $x(0)=x_0$ yields
\[
  \Sc(x_0)\;\le\;\mathrm{Val}(x_0).
\]
Thus $\Sc$ is bounded above by $\mathrm{Val}$ on $\V$. By density of $\V$ in $\X$
(and nonnegativity of $\Sc$), the inequality in fact holds for all
$x_0\in\X$, which completes the proof.
\end{enumerate}
\end{proof}
Next we show that, along any generalized trajectory whose input and output are square-integrable, $\mathrm{Val}(x(t))$ tends to zero for $t\to\infty$.
\begin{proposition}\label{prop:xtends0}
Assume the standing hypotheses of Theorem~\ref{thm:costfun}, and let
$(x(\cdot),u(\cdot),y(\cdot))$ be a generalized trajectory of \eqref{eq:ODEnode} on $\R_{\ge0}$
with $x(0)=x_0$, $u(\cdot)\in\Lp{2}(\R_{\ge0};\U)$, and $y(\cdot)\in\Lp{2}(\R_{\ge0};\Y)$. Then
\[
  \lim_{t\to\infty} \mathrm{Val}\bigl(x(t)\bigr)=0.
\]
\end{proposition}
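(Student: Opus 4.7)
The strategy is to show that $\mathrm{Val}(x(t))$ is dominated by the tail of the cost integral along the given trajectory and then invoke integrability.

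First, I would verify that shifting in time preserves the class of generalized trajectories. For the classical case this is obvious: if $(\xi(\cdot),\mu(\cdot),\eta(\cdot))$ is a classical trajectory of \eqref{eq:ODEnode} on $[0,T]$, then for any $s\in[0,T]$ the triple $(\xi(s+\cdot),\mu(s+\cdot),\eta(s+\cdot))$ is a classical trajectory on $[0,T-s]$. The limiting procedure used to define a generalized trajectory in $\conC([0,T];\X)\times\Lp{2}([0,T];\U)\times\Lp{2}([0,T];\Y)$ commutes with the shift operator, so for each fixed $t\geq 0$ the shifted triple
\[
(\tilde x(\cdot),\tilde u(\cdot),\tilde y(\cdot)):=\bigl(x(t+\cdot),u(t+\cdot),y(t+\cdot)\bigr)
\]
is itself a generalized trajectory of \eqref{eq:ODEnode} on $\R_{\ge 0}$ with $\tilde x(0)=x(t)$.

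Next, I would apply the definition \eqref{eq:valfun} of the value function to $(\tilde x,\tilde u,\tilde y)$, giving
\[
\mathrm{Val}\bigl(x(t)\bigr)\;\leq\;\mathcal{J}\bigl(\tilde y(\cdot),\tilde u(\cdot)\bigr)\;=\;\int_t^{\infty} s\bigl(y(\tau),u(\tau)\bigr)\dx[\tau].
\]
The boundedness of $Q$, $S$ and $R$ yields a constant $c>0$ with $|s(y,u)|\leq c(\|y\|_\Y^2+\|u\|_\U^2)$, and since by hypothesis $u(\cdot)\in\Lp{2}(\R_{\ge0};\U)$ and $y(\cdot)\in\Lp{2}(\R_{\ge0};\Y)$, we have $s(y(\cdot),u(\cdot))\in\Lp{1}(\R_{\ge0})$. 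Consequently the tail integral tends to zero:
\[
\int_t^{\infty} s\bigl(y(\tau),u(\tau)\bigr)\dx[\tau]\;\xrightarrow[t\to\infty]{}\;0.
\]
Combining this with the nonnegativity $\mathrm{Val}\geq 0$ established in Theorem~\ref{thm:costfun}(a) gives the squeeze
\[
0\;\leq\;\mathrm{Val}\bigl(x(t)\bigr)\;\leq\;\int_t^{\infty} s\bigl(y(\tau),u(\tau)\bigr)\dx[\tau]\;\to\; 0,
\]
which proves the claim.

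The only nontrivial step is the shift-invariance of generalized trajectories. I expect this to follow directly from the definition (approximation by classical trajectories in a norm that is itself shift-compatible), but it is the one place where some care is required; everything else is a straightforward consequence of nonnegativity of $s$ together with the integrability of $u(\cdot)$ and $y(\cdot)$.
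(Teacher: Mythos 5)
Your proposal is correct and follows essentially the same route as the paper: the paper's proof likewise uses a time shift to obtain $0\le \mathrm{Val}(x(t))\le\int_t^\infty s(y(\tau),u(\tau))\dx[\tau]$ and then lets the tail of the integrable function vanish. Your additional care about shift-invariance of generalized trajectories and the explicit $\Lp{1}$ bound via boundedness of $Q$, $S$, $R$ only spells out what the paper compresses into ``by a simple shift'' and an appeal to dominated convergence.
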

\begin{proof}
   By a~simple shift, we see that 
    \[\forall\, t\in\R_{\ge0}:\quad 0\leq \mathrm{Val}(x(t))\leq \int_t^\infty s(y(\tau),u(\tau))\dx[\tau].\]
Then the result follows, since, by invoking that $s$ is nonnegative, Lebesgue's dominated convergence theorem gives
\[\lim_{t\to\infty}\int_t^\infty s(y(\tau),u(\tau))\dx[\tau]=0.\]
\end{proof}
Note that, up to this point, we have not dealt with the {optimal control} itself (i.e., a control input that attains the infimum of the cost functional), but only with the optimal {value} of the cost. In general, an optimal control need not exist; one may have only a minimizing sequence of admissible trajectories. This phenomenon already occurs for finite-dimensional systems \cite[Secs.~7\&8]{IlchmannReis2017Outer}. Nevertheless, whenever an optimal control does exist, it can be characterized via the dissipation inequality. We do not state this as a theorem; instead, we record it in the following remark.
 
\begin{remark}[The optimal control]
We have seen that, under the standing assumptions of nonnegativity of the cost functional and state–feedback stabilizability, the value function is a bounded storage function with respect to the supply rate $s$. 
Boundedness implies the existence of a self-adjoint $P\in\Lb(\X)$ such that 
\[
\mathrm{Val}(x_0)=\scprod{ x_0}{Px_0}_\X\quad\text{for all }x_0\in\X.
\]
Moreover, Theorem~\ref{thm:mainbnd} yields a Hilbert space $\Z$ and an operator $K\&L\in\Lb(\dom(A\&B);\Z)$ with dense range such that \eqref{eq:kyp_inf_lurebnd} holds for all $\spvek{x}{u}\in\dom(A\&B)$. 
Applying Proposition~\ref{prop:domScontbnd}, we obtain the following: for $K\&L\in\Lb(\dom(A\&B))$ as in Theorem~\ref{thm:mainbnd}\,\ref{item:dissineq4bnd}, and for all $T>0$, every generalized solution of \eqref{eq:ODEnode} on $[0,T]$ satisfies
\[
  \mathrm{Val}(x(T))-\mathrm{Val}(x(0))
  \;+\; \int_0^{T}s\bigl(y(t),u(t)\bigr)\,\dx[t]
  \;=\; \int_0^{T}\norm*{K\&L\,\spvek{x(t)}{u(t)}}_{\Z}^{2}\,\dx[t].
\]
Invoking Proposition~\ref{prop:xtends0}, which states that $\mathrm{Val}(x(t))\to 0$ as $t\to\infty$ for any generalized trajectory with $u,y\in L^2(\R_{\ge0})$, we conclude that
\[
  \mathrm{Val}(x_0)\;+\;\norm*{K\&L\,\spvek{x(\cdot)}{u(\cdot)}}_{L^{2}(\R_{\ge0};\Z)}^{2}
  \;=\; \mathcal{J}\bigl(y(\cdot),u(\cdot)\bigr).
\]
Hence the quantity $\norm*{K\&L\,\spvek{x(\cdot)}{u(\cdot)}}_{L^{2}(\R_{\ge0};\Z)}^{2}$ measures the suboptimality gap of the control $u(\cdot)$ relative to the optimal cost. In particular, an optimal control exists if and only if the infinite-dimensional differential–algebraic equation
\[
  \spvek{\dot{x}(t)}{0}
  \;=\;
  \begin{bmatrix}A\&B\\[-1mm] K\&L\end{bmatrix}\spvek{x(t)}{u(t)},
  \qquad x(0)=x_0,
\]
admits a solution; any such solution then yields an optimal control. 
In the finite-dimensional case, this closed-loop system coincides with the classical feedback system obtained by the solution of the algebraic Riccati equation \cite{Will71a}.
\end{remark}
Finally, we briefly comment on closely related results in the literature.
\begin{remark}
In \cite{OpmeerStaffans2014,OpmeerStaffans2019}, the minimization of the cost functional
\[
\norm{u(\cdot)}_{\Lp{2}(\R_{\ge0};\U)}^2+\norm{y(\cdot)}_{\Lp{2}(\R_{\ge0};\Y)}^2
\]
subject to a prescribed initial value is considered. In our notation, this corresponds to a supply rate $s(\cdot,\cdot)$ as in \eqref{eq:supply_inf} with $S=0$, $R=\id_\U$, and $Q=\id_\Y$. One essentially obtains the same equations as in \eqref{eq:kyp_inf_lure} for this setting. On the one hand, that cost functional is more restrictive than ours; on the other hand, the class of systems is broader, as \emph{operator nodes} rather than system nodes are treated. Roughly speaking, this framework does not require the state operator $A$ to generate a strongly continuous semigroup; it suffices that $A$ be densely defined with a nonempty resolvent set (e.g., this allows one to treat the backward heat equation).

In \cite{OpmeerStaffans2014,OpmeerStaffans2019}, a frequency-domain viewpoint is adopted, and for a subdomain $\Omega$ of the resolvent set of $A$ intersected with the complex right half-plane, the output (when it belongs to $\Lp{2}(\R_{\ge0};\Y)$) is defined via the inverse Laplace transform of the analytic continuation of
\begin{align*}
\widehat{y}:\quad\Omega&\to\Y,\\
s&\mapsto G(s)\widehat{u}(s)+C\&D\spvek{(s\id_\X -A)^{-1}x_0}{0},
\end{align*}
where $\widehat{u}(\cdot)$ is the Laplace transform of $u(\cdot)$, and $G(\cdot)$ is the transfer function of $\sbvek{A\& B}{C\& D}$, see \cite[Sec.~4.6]{Staf05}. Indeed, by \cite[Lem.~4.7.12]{Staf05}, generalized trajectories satisfy the above frequency-domain relation. We also note that the solution concept in \cite{OpmeerStaffans2014,OpmeerStaffans2019} is considerably more general, not only because operator nodes (rather than system nodes) are treated, but also because the state trajectory need not be continuous as a map $\R_{\ge0}\to\X$. Moreover, no state-feedback stabilizability assumption is imposed; instead, the existence of $\Lp{2}$-trajectories is required for a dense subspace of \(\X\) (which also leads to unbounded value functions).

It is important to observe that those results make essential use of the fact that the $\Lp{2}$-norm of the input enters the cost functional. Without going into details, these results can be applied more or less directly to optimal problems \eqref{eq:optcontr} with full input weighting (that is, $R$ in \eqref{eq:supply_inf} is boundedly invertible). The case of more general cost functionals (even nonnegative ones), however, still requires additional work beyond the scope of this article.
\end{remark}

\section{Examples}\label{sec:ex}
At the outset, note that although the monograph \cite{Staf05} contains almost no PDE examples. Partial differential equations, especially with boundary control, are very naturally described within the system node framework. See, for instance, \cite{FriReRe24,ReisSchaller2024Oseen,PhilippReisSchaller2025,ReisJacobFarkasSchmitz2023}, where boundary-controlled beams, waves, the heat equation, Oseen flows, and Maxwell’s equations are treated.

In selecting the two examples below, we strike a balance between transparency and nontriviality.
The first example is deliberately simple: a transport equation with boundary control and observation. From the author’s perspective, this example is well suited to illustrate the spaces $\V$ and $\W$ from \eqref{eq:Vdef} and \eqref{eq:Wdef}.
Thereafter we consider the heat equation on some Lipschitz domain in $\R^d$ with homogeneous Dirichlet boundary conditions and trivial input (i.e., $\U=\{0\}$). As output we take the Neumann trace and use a supply rate with $R=\id_\U$. We show that, while the system is dissipative with respect to a suitable storage function, no bounded storage function exists in this case.

\subsection{Boundary-controlled transport equation}
Consider the first-order transport equation on $[0,1]$ with rightward flow and boundary control at the inflow point:
\[
\tfrac{\partial}{\partial t}x(t,\xi)=-\tfrac{\partial}{\partial \xi}x(t,\xi),\qquad
x(0,\xi)=x_0(\xi),\quad t\ge0,\ \xi\in[0,1],
\]
together with the input and output
\[
u(t)=x(t,0),\qquad y(t)=\alpha\, x(t,0)+x(t,1),
\]
where $\alpha\in\C$ is a~parameter.
A~formal consideration yields that
\begin{align*}
\ddts \int_0^1 |x(t,\xi)|^2\,\dx[\xi]
&= 2\Re \int_0^1 \overline{x(t,\xi)}\,\tfrac{\partial}{\partial t}x(t,\xi)\,\dx[\xi]\\
&= -2\Re \int_0^1 \overline{x(t,\xi)}\,\tfrac{\partial}{\partial \xi}x(t,\xi)\,\dx[\xi]\\
&= -\Re \int_0^1 \tfrac{\partial}{\partial \xi}\bigl|x(t,\xi)\bigr|^2\,\dx[\xi]\\
&= |x(t,0)|^2 - |x(t,1)|^2= |u(t)|^2 - |y(t)-\alpha\, u(t)|^2. % since u(t)=x(t,0), y(t)=x(t,0)+x(t,1)
\end{align*}
This suggests that $\Sc$ with 
\[\Sc(x(t))=\int_0^1|x(t,\xi)|^2\dx[\xi]=\norm*{x(t,\cdot)}_{\Lp{2}([0,1])}^2\]
is a~storage function with respect to any supply rate functional with
\begin{multline}
  \forall\,u,y\in\C:\quad   s(u,y)=\spvek{y\\[-1mm]}{u}\adjun\sbmat{q}{s}{\overline{s}}{r} \spvek{y\\[-1mm]}{u}\\
  \geq |y-\alpha\,u|^2-|u|^2=\spvek{y\\[-1mm]}{u}\adjun\sbmat{1}{-\alpha}{-\overline{\alpha}}{\;|\alpha|^2-1} \spvek{y\\[-1mm]}{u}\label{eq:transportsupply}
\end{multline}
for $q,r\in\R$, $s\in\C$. Note that \eqref{eq:transportsupply} is equivalent to positive semidefiniteness of the matrix
\begin{equation}
  M:=\left[\begin{smallmatrix}{q-1}&{s+\alpha}\\[1mm]{-\overline{s+\alpha}}\;&{\;\;r+1-|\alpha|^2}\end{smallmatrix}\right].  \label{eq:Mdef}
\end{equation}
In what follows, we show that the system node framework, together with the results obtained in this article, yields exactly the same conclusion as the formal considerations above. To this end, we first formulate the foregoing system in the system node setting.

In a certain sense, the formulation of the operator $A\&B$ for this example is prototypical for system nodes: the action of the PDE (here, the negative spatial derivative) is mirrored by the action of $A\&B$, whereas the domain encodes the boundary condition (namely, that the left boundary value equals the input). Concretely,
\[
\dom(A\&B)=\setdef*{\spvek{x}{u}\in H^1([0,1])\times\C}{x(0)=u},
\qquad
A\&B\spvek{x}{u}=-\tfrac{\partial}{\partial \xi}x.
\]
Moreover, in accordance with the output definition, the output operator is
\[
C\&D\spvek{x}{u}=\alpha\,x(0)+x(1)=\alpha\,u+x(1)
\quad \forall\,\spvek{x}{u}\in\dom(A\&B).
\]
Note that these boundary evaluations are well defined for $x\in H^1([0,1])$ by the Sobolev trace theorem \cite[Thm.~4.12]{AdamFour03}. The same theorem also implies that $C\&D\in\Lb(\dom(A\&B);\C)$. Closedness of $A\&B$ follows from the definition of the weak derivative, whereas, for all $u\in\C$, the constant function $x(\cdot)\equiv u$ fulfills $\spvek{x}{u}\in\dom(A\&B)$. For the property of $\sbvek{A\&B}{C\&D}$ being a~system node on $(\Lp{2}([0,1]),\C,\C)$, it remains to show that main operator generates a~strongly continuous semigroup on $\Lp{2}([0,1])$. This however follows, since $A$ is given by
\[
Ax=-\tfrac{\mathrm d}{\mathrm d\xi}x,\qquad
\dom A=\setdef{x\in H^1([0,1])}{x(0)=0},
\]
and it is shown in that it generated the right-shift semigroup on $\Lp{2}([0,1])$ (see e.g.\ \cite[Ex.~2.8.7]{TucsnakWeiss2009}.\\
In passing, we note that the spaces $\V$ and $\W$ from \eqref{eq:Vdef} and \eqref{eq:Wdef} are given by
\begin{align*}
    \V&=\setdef{x\in H^{1}([0,1])}{\exists \,u\in\C\text{ s.t.\ }x(0)=u}=H^{1}([0,1]),\\[1mm]
    \W&=\setdef{\spvek{x}u\in H^{1}([0,1])\times\C}{x(0)=u\,\wedge\,\tfrac{\partial}{\partial \xi}x\in H^{1}([0,1])}\\
    &=\setdef{\spvek{x}u\in H^{2}([0,1])\times\C}{x(0)=u}.
\end{align*}
We now show that the functional $\Sc:\Lp{2}([0,1])\to\R$, defined by $\Sc(x)=\norm{x}_{\Lp{2}}^{2}$, is a storage function for the system with respect to the supply rate functional in \eqref{eq:transportsupply}. To this end we first observe that the identity operator on $\Lp{2}([0,1])$ is induced by $\Sc$. Further, by using that the definition of the weak derivative implies that
\[\forall\, x\in H^1([0,1]):\quad2\Re\scprod*{x}{\tfrac{\partial}{\partial\xi}x}=|x(1)|^2-|x(0)|^2.\]
Using the latter, we obtain that, for $M$ as in 
\begin{align*}
&\;2\Re\scprod*{Px}{A\&B\,\spvek{x}{u}}_{\X}
  + s\!\left(C\&D\,\spvek{x}{u},\,u\right)\\
&= -2\Re\scprod*{x}{\tfrac{\partial}{\partial \xi}x}_{\Lp{2}}
   + \spvek{\alpha x(0)+x(1)}{x(0)}\adjun
     \begin{bmatrix} q & s \\ \overline{s} & r \end{bmatrix}
     \spvek{\alpha x(0)+x(1)}{x(0)}\\
&= |x(0)|^{2}-|x(1)|^{2}
   + \spvek{\alpha x(0)+x(1)}{x(0)}\adjun
     \begin{bmatrix} q & s \\ \overline{s} & r \end{bmatrix}
     \spvek{\alpha x(0)+x(1)}{x(0)}\\
&= \spvek{\alpha x(0)+x(1)}{x(0)}\adjun
   \left(
     \begin{bmatrix} q & s \\ \overline{s} & r \end{bmatrix}
     - \begin{bmatrix} 1 & -\alpha \\ -\overline{\alpha} & |\alpha|^{2}-1 \end{bmatrix}
   \right)
   \spvek{\alpha x(0)+x(1)}{x(0)}\\
&= \spvek{\alpha x(0)+x(1)}{x(0)}\adjun
   M\,
   \spvek{\alpha x(0)+x(1)}{x(0)}\geq0,
\end{align*}
where the latter holds due to positive semidefiniteness of $M$. Then Theorem~\ref{thm:mainbnd} yields that the expected dissipativity result holds. To determine the dissipation rate according to Theorem~\ref{thm:mainbnd}\,\ref{item:dissineq4bnd}, we first take a~rank revealing factorization of $M$, i.e., $M=L\adjun L$, $L\in\C^{r\times 2}$, $r=\rk M\in\{0,1,2\}$, and we set 
\[K\&L\spvek{x}u=L\spvek{\alpha x(0)+x(1)}{x(0)}=L\sbmat{0}{1}{1}{-\alpha}\spvek{x(0)}{x(1)},\quad {x}\in H^1([0,1]),\;u=x(0).\]
The previous calculations now indeed show that
\eqref{eq:kyp_inf_lurebnd} holds.

\subsection{Heat equation with Neumann observation}

Let $\Omega\subset\R^{d}$, $d\in\N$, be a Lipschitz domain. Consider the heat equation with homogeneous Dirichlet boundary conditions and a Neumann observation:
\[
\begin{aligned}
\tfrac{\partial}{\partial t} x(t,\xi) &= \Delta x(t,\xi), & x(0,\xi) &= x_0(\xi), && t\ge 0,\ \xi\in\Omega,\\
x(t,\xi) &= 0, &
y(t,\xi) &= \frac{\partial x(t,\cdot)}{\partial \nu}\Big|_{\partial\Omega}, && t\ge 0,\ \xi\in\partial\Omega,
\end{aligned}
\]
where $\nu$ denotes the outward unit normal on $\partial\Omega$. Our system is provided with no input (i.e., $\U=\{0\}$), the state space is chosen as $\X=\Lp{2}(\Omega)$, and the main operator is given by the Dirichlet Laplacian, i.e.,
\begin{equation}
\dom(A)=\setdef{x\in H^1_0(\Omega)}{\nabla x\in H_{\operatorname{div}}(\Omega)},\quad Ax=\Delta x,\label{eq:DirLapl}
\end{equation}
where $H_{\operatorname{div}}(\Omega)$ of all elements of $\Lp{2}(\Omega;\C^d)$ whose weak divergence is in $\Lp{2}(\Omega)$. 
The output is given by the Neumann trace of $x$, which, in the sense of \cite[Chap.~20]{Tartar2007Sobolev}, is the weak normal trace of $\nabla x$ and takes values in the fractional Sobolev space
\[
\Y := H^{-1/2}(\partial\Omega) := H^{1/2}(\partial\Omega)\adjun.
\]
Accordingly (and since the input space is trivial, we write $C$ in place of $C\&D$), the output operator is
\[
Cx = \gamma_{\mathrm n}(\nabla x),
\qquad
\gamma_{\mathrm n}\in \Lb\!\big(H_{\operatorname{div}}(\Omega),\,H^{-1/2}(\partial\Omega)\big),
\]
where $\gamma_{\mathrm n}$ denotes the normal trace operator. The system node properties were established in \cite[Sec.~4.1]{PhilippReisSchaller2025} for the more general class of advection–diffusion equations with Dirichlet control; hence they apply here as well.
Since the input space is trivial, the spaces $\V$ and $\W$ from \eqref{eq:Vdef} and \eqref{eq:Wdef} are, for $A$ the Dirichlet Laplacian as in \eqref{eq:DirLapl}, given by
\[
\V=\dom(A),\qquad \W=\dom(A^{2}).
\]
In what follows we consider the supply rate functional
\[
s(y)=-\norm{y}_{H^{-1/2}(\partial\Omega)}^2.
\]
For academic purposes, we seek for a~nonpositive storage function. By Theorem~\ref{thm:main}, the search for such a~storage function reduces to the operator inequality
\[
2\Re\scprod*{\widetilde{P}x}{Ax}_{\V\adjun,\V}-\norm{Cx}_{\Y}^{2}\ge 0
\quad\text{for all }x\in\V,
\]
which is essentially a Lyapunov inequality in the sense of \cite[Thm.~5.1.1]{TucsnakWeiss2009} for $-\widetilde{P}$. 
This implies that, if there were a bounded nonpositive solution $\widetilde{P}=P\in\Lb(\Lp{2}(\Omega))$, the system would be admissible in the sense of \cite[Def.~4.6.4]{TucsnakWeiss2009}. 
However, as shown in \cite{PreusslerSchwenninger2024LpAdmissibility}, the heat equation with Dirichlet boundary control is not admissible in the sense of \cite[Def.~4.6.1]{TucsnakWeiss2009}; and a duality argument then yields a contradiction for our setup. 
Consequently, no bounded and nonpositive storage function exists for this problem. However, the following argumentation shows that an unbounded storage function exists: Let $\fA$ be the semigroup generated by $A$. Then, by exponential stability of the Dirichlet Laplacian,
the mapping
\begin{align*}
\Psi:\quad\dom(A)&\to\Lp{2}(\R_{\ge0};\Y),\\
x&\mapsto C\fA(\cdot)x \end{align*}
is well-defined. Then, by \cite[Prop.~2.10]{ReisSchaller2025LQ}, $\Psi$ is closable as an operator from $\X$ to $\Lp{2}(\R_{\ge0})$. Denote this closure by $\overline{\Psi}:\X\supset\dom(\overline{\Psi})\to \Lp{2}(\R_{\ge0};\Y)$. Then 
\begin{align*}
\Sc:\quad\dom(\Sc)=\dom(\overline{\Psi})&\to\R,\\
x&\mapsto -\norm*{\overline{\Psi}x}^2_{\Lp{2}(\R_{\ge0};\Y)}\end{align*}
is a~closed and nonnegative form with
\[\V=\dom(A)\subset\dom(\overline{\Psi})=\dom(\Sc).\]
The operator
$\widetilde{P}\in\Lb(\dom(\Sc),\dom(\Sc)\adjun)$ such that  \eqref{eq:Pdual} holds for $h$ as in \eqref{eq:hsesq} then fulfills
\[\forall\,x_1,x_2\in\dom(\Sc):\quad \scprod{\widetilde{P}x_1}{x_2}_{\dom(\Sc)\adjun,\dom(\Sc)}=-\scprod{\overline{\Psi}x_1}{\overline{\Psi}x_2}_{\Lp{2}(\R_{\ge0};\Y)}.\]
Further, by $\fA Ax=\ddts \fA x$ for all $x\in\dom(A)$, we obtain that
\[\forall\,x\in\dom(A^2):\,\Psi Ax=\ddts\Psi x.\]
Invoking the latter, we obtain, by integration by parts, that for all $x\in\W=\dom(A^2)$,
\begin{align*}
2\Re\scprod*{\widetilde{P}x}{Ax}_{\V\adjun,\V}-\norm{Cx}_{\Y}^{2}
&= -2\Re\scprod*{\overline{\Psi}x}{\overline{\Psi}Ax}_{\Lp{2}(\R_{\ge0};\Y)} - \norm{Cx}_{\Y}^{2}\\
&= -2\Re\scprod*{{\Psi}x}{{\Psi}Ax}_{\Lp{2}(\R_{\ge0};\Y)} - \norm{Cx}_{\Y}^{2}\\
&= -2\Re\scprod*{{\Psi}x}{\ddts{\Psi}x}_{\Lp{2}(\R_{\ge0};\Y)} - \norm{Cx}_{\Y}^{2}\\
&= \norm*{(\Psi x)(0)}_{\Y}^{2} - \norm{Cx}_{\Y}^{2}.
\end{align*}
However, by $\fA(0)=\id_\X$, we have
\[\forall\,x\in\dom(A^2):\quad \big({\Psi}x\big)(0)=Cx.\]
This gives
\[    2\Re\scprod*{\widetilde{P}x}{Ax}_{\V\adjun,\V}-\norm{Cx}_{\Y}^{2}=0\geq0,\]
which shows that $\Sc$ is a~storage function.

\begin{appendices}
\section{Auxiliary results on system nodes}
We collect here a few auxiliary results on system nodes. Thematically, placement at the end of Section~\ref{sec:sysnodes} would also be natural, since they rely on the spaces $\V$ and $\W$ introduced in \eqref{eq:Vdef} and \eqref{eq:Wdef}, resp. We have relegated them to the appendix to keep the exposition focused: the results are technically self-contained, used only at a few specific points, and would otherwise distract from the central focus on the dissipation inequality.

We start with a~lemma which states that the inclusions $\W\subset\dom(A\&B)\subset\X\times \U$ are dense. To this end, let us note that, for a~system node $\sbvek{A\& B}{C\& D}$ on $(\X,\U,\Y)$, we can define the Hilbert space $\X_{-1}$ by the completion of $\X$ with respect to the norm $\norm{x}_{\X_{-1}}:=\norm{(\alpha \id_\X-A)^{-1}x}_\X$ for some $\alpha$ in the resolvent of $A$. The operator $A$ extends to closed and densely defined operator $A_{-1} : \X_{-1}\supset\dom (A_{-1}) = \X\to \X_{-1}$, with the same spectrum as $A$ \cite[Prop.~2.10.3]{TucsnakWeiss2009}.
Further, there exists an operator $B\in \Lb(\U,\X_{-1})$ such that $[A_{-1}\ B]\in \Lb(\X\times\U,\X_{-1})$ is an extension of $A\& B$. The domain of $A\&B$ (equally: the domain of $S$) satisfies
 \begin{equation}
 \dom(A\&B)=\setdef{\spvek xu \in \X\times \U}{A_{-1}x+Bu\in \X},
 \label{eq:ABdom}\end{equation}
 see \cite[Def.~4.7.2 \& Lem.~4.7.3]{Staf05}. 

\begin{lemma}\label{eq:Vcore}
Let $\sbvek{A\& B}{C\& D}$ be a~system node on $(\X,\U,\Y)$. Then     
$\W$ as in \eqref{eq:Wdef} is a~core of $A\&B$, and $\dom(A\&B)$ is dense in $\X\times \U$.
\end{lemma}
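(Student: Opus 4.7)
The plan is to treat the two assertions separately, leveraging the extrapolation description \eqref{eq:ABdom} of $\dom(A\&B)$ together with the density of $\dom(A)$ in $\X$ (which comes from $A$ generating a $C_0$-semigroup).

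For density of $\dom(A\&B)$ in $\X\times\U$, I would fix $\spvek{x_0}{u_0}\in\X\times\U$ and invoke condition~\ref{def:sysnode3} of Definition~\ref{def:sysnode} to produce some $x_1\in\X$ with $\spvek{x_1}{u_0}\in\dom(A\&B)$. Then I would pick $z_n\in\dom(A)$ with $z_n\to x_0-x_1$ in $\X$. Since $\spvek{z_n}{0}\in\dom(A\&B)$ for $z_n\in\dom(A)$, the sum $\spvek{x_1+z_n}{u_0}=\spvek{x_1}{u_0}+\spvek{z_n}{0}$ lies in $\dom(A\&B)$ and converges to $\spvek{x_0}{u_0}$ in $\X\times\U$.

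For the core property of $\W$, my strategy is to keep the input component fixed and modify only the state, so as to force $A\&B\spvek{x_n}{u_n}$ into $\V$. After replacing $A$ by the shifted operator $A-\lambda\,\id_\X$ for some $\lambda\in\rho(A)$---which preserves $\dom(A\&B)$, $\V$, and $\W$, and yields an equivalent graph norm---I may assume without loss of generality that $0\in\rho(A)$. Then $A^{-1}\in\Lb(\X,\dom(A))$ exists and, since $A_{-1}$ restricted to $\dom(A)$ coincides with $A$, one has $A_{-1}A^{-1}=\id_\X$ on $\X$. Given $\spvek{x}{u}\in\dom(A\&B)$, I set $z:=A\&B\spvek{x}{u}\in\X$, and approximate $z$ in $\X$ by a sequence $z_n\in\V$; this is possible since $\dom(A)\subset\V$ is dense in $\X$ (the Yosida sequence $z_n:=n(n-A)^{-1}z$ is a canonical choice).

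With $\spvek{x_n}{u_n}:=\spvek{x-A^{-1}(z-z_n)}{u}$, the extrapolation formula \eqref{eq:ABdom} yields
\[
A_{-1}x_n+Bu_n\;=\;A_{-1}x+Bu-A_{-1}A^{-1}(z-z_n)\;=\;z-(z-z_n)\;=\;z_n\;\in\;\V\subset\X,
\]
so $\spvek{x_n}{u_n}\in\dom(A\&B)$ with $A\&B\spvek{x_n}{u_n}=z_n\in\V$, hence $\spvek{x_n}{u_n}\in\W$. Boundedness of $A^{-1}$ then gives $x_n\to x$ in $\X$, the input component is unchanged, and $A\&B\spvek{x_n}{u_n}=z_n\to z=A\&B\spvek{x}{u}$; altogether this is convergence in the graph norm of $A\&B$, proving that $\W$ is a core of $A\&B$.

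The main obstacle I foresee is the bookkeeping around the extrapolation setup: one must verify that the reduction to $0\in\rho(A)$ leaves $\V$ and $\W$ invariant, that the identity $A_{-1}A^{-1}=\id_\X$ is indeed inherited by the shifted operator, and that the correction $A^{-1}(z-z_n)$ both keeps $\spvek{x_n}{u_n}$ inside $\dom(A\&B)$ and pushes its image precisely into $\V$. Once this bookkeeping is settled, the construction is essentially forced by the linear-algebraic identity provided by \eqref{eq:ABdom}, and the fact that density of $\V$ in $\X$ is enough thanks to $\dom(A)\subset\V$.
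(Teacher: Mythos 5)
Your proof is correct. The density of $\dom(A\&B)$ in $\X\times\U$ is argued exactly as in the paper (the generator property makes $\dom(A)$ dense in $\X$, and condition~\ref{def:sysnode3} of Definition~\ref{def:sysnode} supplies, for each $u_0$, some $x_1$ with $\spvek{x_1}{u_0}\in\dom(A\&B)$). For the core property your construction is a genuine variant of the paper's: the paper introduces the bijection $T:\dom(A\&B)\to\X\times\U$, $\spvek{x}{u}\mapsto\spvek{A\&B\spvek{x}{u}-\lambda x}{u}$, approximates $T\spvek{x}{u}$ in $\X\times\U$ by a sequence $\spvek{z_n}{w_n}\in\dom(A\&B)$ (reusing the just-proved density), and sets $\spvek{x_n}{u_n}:=T^{-1}\spvek{z_n}{w_n}$; then $A\&B\spvek{x_n}{u_n}=z_n+\lambda x_n\in\V$ because $z_n$ and $x_n$ are both first components of elements of $\dom(A\&B)$. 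You instead keep the input fixed, approximate only $z=A\&B\spvek{x}{u}$ by $z_n\in\dom(A)\subset\V$, and correct the state by $A^{-1}(z-z_n)$, using \eqref{eq:ABdom} and $A_{-1}A^{-1}=\id_\X$ to see that the new image is exactly $z_n$. Both routes hinge on the same mechanism --- the resolvent of $A$ converts an $\X$-approximation of the image into a graph-norm approximation in $\dom(A\&B)$ --- but yours is more hands-on: it needs only density of $\dom(A)$ in $\X$ rather than density of $\dom(A\&B)$ in $\X\times\U$, and it avoids introducing $T$ and its inverse, at the price of the preliminary shift to $0\in\rho(A)$. The bookkeeping you flag (invariance of $\dom(A\&B)$, $\V$, $\W$ and equivalence of the graph norms under the shift $A\&B\mapsto A\&B-[\,\lambda\id_\X\;\;0\,]$, together with $A_{-1}\vert_{\dom(A)}=A$) all checks out, since the correction term $\lambda x$ lies in $\V$ whenever $\spvek{x}{u}\in\dom(A\&B)$.
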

\begin{proof}
Since $A$ generates a strongly continuous semigroup, it is densely defined \cite[Chap.~2, Thm.~1.4]{EngeNage00}. Hence, a~combination of \eqref{eq:ABdom} with property \ref{def:sysnode3} from Definition~\ref{def:sysnode} implies that $A\&B$ is densely defined.\\
It remains to prove that $\W$ is a~core of $A\&B$. Since $A$ generates a~strongly continuous semigroup, it follows from \cite[Chap.~2, Thm.~1.10]{EngeNage00} that  $\lambda \id_\X-A$ is bijective for some $\lambda\in\C$. Consequently, the operator
\begin{align*}
    T:\quad\dom(A\&B)&\to \X\times\U,\\
    \spvek{x}{u}&\mapsto \spvek{A\&B\spvek{x}{u}-\lambda x\\[-1mm]}{u}
\end{align*}
is bijective. It can be further seen that the inverse of $T$ fulfills
\begin{align*}
    T^{-1}:\quad\X\times\U &\to \dom(A\&B),\\
    \spvek{x}{u}&\mapsto \spvek{(A-\lambda \id_\X)^{-1}x-(A_{-1}-\lambda \id_\X )^{-1}Bu}{u}.
\end{align*}
Now assume that $\spvek{x}u\in\dom(A\&B)$. To prove the desired result, we have to show that there exists a~sequence $\big(\spvek{x_n}{u_n}\big)$  in $\V$ which converges in $\dom(A\&B)$ to $\spvek{x}u$. Since $\dom(A\&B)$ is dense in $\X\times\U$, there exists a~sequence $\big(\spvek{z_n}{w_n}\big)$ in $\dom(A\&B)$ which converges in $\X\times\U$ to $T\spvek{x}u$. Define
\[
\big(\spvek{x_n}{u_n}\big):=T^{-1}\big(\spvek{z_n}{w_n}\big).\]
Then it follows immediately that $\big(\spvek{x_n}{u_n}\big)$ converges in $\dom(A\&B)$ to $\spvek{x}{u}$. Further, the structure of $T^{-1}$ yields that
\[\forall\,n\in\N:\quad A\&B\spvek{x_n}{u_n}=z_n+\lambda x_n\in\V.\]
The latter gives $\spvek{x_n}{u_n}\in \W$ for all $n\in\N$,
and the result is proven.
\end{proof}
Next we prove two lemmas on smooth trajectories of systems governed by system nodes.

\begin{lemma}\label{lem:smoothsol}
Let $\sbvek{A\& B}{C\& D}$ be a~system node on $(\X,\U,\Y)$, let $T>0$, and assume that $(x(\cdot),u(\cdot),y(\cdot))$ is a classical trajectory of \eqref{eq:ODEnode} on $[0,T]$ with $x(\cdot)\in\conC^{2}([0,T];\X)$ and $u(\cdot)\in\conC^{1}([0,T];\U)$. Then, for $\V$ as defined in \eqref{eq:Vdef}, and $\W$ as defined in \eqref{eq:Wdef},
\[x(\cdot)\in \conC^{1}([0,T];\V)\;\text{ and }\spvek{x(\cdot)}{u(\cdot)}\in \conC([0,T];\W).\]
\end{lemma}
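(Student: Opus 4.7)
The plan is to differentiate the state equation $\dot{x}(t)=A\&B\spvek{x(t)}{u(t)}$ and to exploit closedness of $A\&B$. For fixed $t\in[0,T]$ and $h\neq0$ small, the pair $\frac{1}{h}\spvek{x(t+h)-x(t)}{u(t+h)-u(t)}$ belongs to $\dom(A\&B)$ by linearity, and $A\&B$ sends it to $\frac{\dot{x}(t+h)-\dot{x}(t)}{h}$ by the state equation. As $h\to 0$, the $\conC^{1}$-regularity of $x$ and $u$ gives convergence of the argument to $\spvek{\dot{x}(t)}{\dot{u}(t)}$ in $\X\times\U$, while the $\conC^{2}$-regularity of $x$ gives convergence of the image to $\ddot{x}(t)$ in $\X$. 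Closedness of $A\&B$ (Definition~\ref{def:sysnode}(a)) then yields $\spvek{\dot{x}(t)}{\dot{u}(t)}\in\dom(A\&B)$ with $A\&B\spvek{\dot{x}(t)}{\dot{u}(t)}=\ddot{x}(t)$.

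From this I would deduce the two pointwise inclusions. First, $\spvek{\dot{x}(t)}{\dot{u}(t)}\in\dom(A\&B)$ immediately gives $\dot{x}(t)\in\V$. Combined with the identity $A\&B\spvek{x(t)}{u(t)}=\dot{x}(t)\in\V$, this places $\spvek{x(t)}{u(t)}$ into $\W$ by the very definition \eqref{eq:Wdef}. For the continuity $\spvek{x(\cdot)}{u(\cdot)}\in \conC([0,T];\W)$, I would use that $\norm{\spvek{x(t)}{u(t)}}_{\W}^{2}=\norm{\spvek{x(t)}{u(t)}}_{\X\times\U}^{2}+\norm{\dot{x}(t)}_{\V}^{2}$ together with the elementary bound $\norm{v}_{\V}\leq\norm{\spvek{v}{w}}_{\dom(A\&B)}$, valid for any $\spvek{v}{w}\in\dom(A\&B)$. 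Applied to the increments $v=\dot{x}(t+h)-\dot{x}(t)$, $w=\dot{u}(t+h)-\dot{u}(t)$, this reduces continuity of $\dot{x}:[0,T]\to\V$ to joint continuity of $(\dot{x},\dot{u},\ddot{x})$ in $\X\times\U\times\X$, which is part of the hypotheses.

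The final claim $x(\cdot)\in\conC^{1}([0,T];\V)$ is handled by the same trick applied to the difference quotient: the $\V$-distance of $\frac{x(t+h)-x(t)}{h}$ from $\dot{x}(t)$ is controlled by the $\dom(A\&B)$-norm of the pair $\frac{1}{h}\spvek{x(t+h)-x(t)}{u(t+h)-u(t)}-\spvek{\dot{x}(t)}{\dot{u}(t)}$, whose three natural components, namely $\frac{x(t+h)-x(t)}{h}-\dot{x}(t)$ in $\X$, $\frac{u(t+h)-u(t)}{h}-\dot{u}(t)$ in $\U$, and (after application of $A\&B$) $\frac{\dot{x}(t+h)-\dot{x}(t)}{h}-\ddot{x}(t)$ in $\X$, all tend to zero by the $\conC^{2}$- and $\conC^{1}$-regularity assumptions. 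I expect the sole genuine conceptual step to be the initial closedness argument; everything afterwards is norm bookkeeping, powered by the structural inequality $\norm{\cdot}_{\V}\leq\norm{\cdot}_{\dom(A\&B)}$ that is built into the definition \eqref{eq:Vdef} of $\V$.
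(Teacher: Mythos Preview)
Your proposal is correct and follows essentially the same route as the paper: both arguments hinge on using closedness of $A\&B$ to differentiate the state equation, obtaining $\spvek{\dot{x}(t)}{\dot{u}(t)}\in\dom(A\&B)$ with $A\&B\spvek{\dot{x}(t)}{\dot{u}(t)}=\ddot{x}(t)$, and then reading off the $\V$- and $\W$-regularity from the definitions. The paper phrases the first step as ``formal differentiation of \eqref{eq:ODEnode} is possible'' (i.e., $(\dot{x},\dot{u},\dot{y})$ is again a classical trajectory), while you make the difference-quotient mechanism explicit; your additional bookkeeping via the inequality $\norm{v}_{\V}\le\norm{\spvek{v}{w}}_{\dom(A\&B)}$ merely unpacks what the paper leaves implicit.
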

\begin{proof}
Closedness of $\Sc$ together with smoothness of $x(\cdot)$, $u(\cdot)$ and $y(\cdot)$ yields that a~formal differentiation of \eqref{eq:ODEnode} is possible, whence, $(\dot{x}(\cdot),\dot{u}(\cdot),\dot{y}(\cdot))$ is a classical trajectory of  \eqref{eq:ODEnode} on $[0,T]$. In particular, 
\[\spvek{\dot{x}(\cdot)}{\dot{u}(\cdot)}\in\conC([0,T];\dom(A\&B)),\]
and, consequently, $\dot{x}(\cdot)\in \conC([0,T];\V)$. This implies that $x(\cdot)\in \conC^{1}([0,T];\V)$, and, by \[A\&B\spvek{x(\cdot)}{u(\cdot)}=\dot{x}(\cdot)\in \conC([0,T];\W),\] we also obtain that 
\[\spvek{x(\cdot)}{u(\cdot)}\in \conC([0,T];\W).\]
\end{proof}
Next we show that for every $\spvek{x_0}{u_0}\in \W$ there exists a~smooth classical trajectory with $x(0)=x_0$ and $u(0)=u_0$. The proof relies heavily on Proposition \ref{prop:solex}.
\begin{lemma}\label{lem:solexsmooth}
Let $\sbvek{A\& B}{C\& D}$ be a~system node  on $(\X,\U,\Y)$, $T\in\R_{>0}$, and let $\V$ and $\W$ be as in \eqref{eq:Vdef} and \eqref{eq:Wdef}.
Then, for all $\spvek{x_0}{u_0}\in \W$, there exists 
a~classical trajectory $(x(\cdot),u(\cdot),y(\cdot))$ of \eqref{eq:ODEnode} on $[0,T]$ with $x(0)=x_0$, $u(0)=u_0$, $x\in \conC^{2}([0,t];\X)$ and $\spvek{x(\cdot)}{u(\cdot)}\in \conC([0,T];\W)$.
\end{lemma}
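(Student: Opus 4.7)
The plan is to reduce the problem to a double application of Proposition~\ref{prop:solex}: once for the Cauchy problem that actually produces $x(\cdot)$, and once for a ``differentiated'' Cauchy problem whose solution will be identified with $\dot x(\cdot)$. The key observation is that the condition $\spvek{x_0}{u_0}\in\W$ is exactly what is needed to run Proposition~\ref{prop:solex} on this differentiated problem.

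Given $\spvek{x_0}{u_0}\in\W$, set $z_0:=A\&B\spvek{x_0}{u_0}$, which by definition of $\W$ lies in $\V$; then by the definition \eqref{eq:Vdef} of $\V$ we may choose $v_0\in\U$ with $\spvek{z_0}{v_0}\in\dom(A\&B)$. I take the input $u(t):=u_0+tv_0$, so $u\in \conC^{\infty}([0,T];\U)\subset \Wkp{2,1}([0,T];\U)$, with $u(0)=u_0$ and $\dot u\equiv v_0$. Applying Proposition~\ref{prop:solex} to the initial state $z_0$ with constant input $\tilde u(\cdot)\equiv v_0$ yields a classical trajectory $(w(\cdot),v_0,\tilde y(\cdot))$ on $[0,T]$; in particular $w\in\conC^{1}([0,T];\X)$, $\spvek{w(\cdot)}{v_0}\in \conC([0,T];\dom(A\&B))$, and $\dot w(t)=A\&B\spvek{w(t)}{v_0}$.

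Now define
\[
x(t):=x_0+\int_0^t w(s)\dx[s],\qquad y(t):=C\&D\spvek{x(t)}{u(t)}.
\]
Writing $\spvek{x(t)}{u(t)}=\spvek{x_0}{u_0}+\int_0^t\spvek{w(s)}{v_0}\dx[s]$ and exploiting closedness of $A\&B$ together with continuity of $s\mapsto\spvek{w(s)}{v_0}$ in the graph norm, the Bochner integral remains in $\dom(A\&B)$ and $A\&B$ may be pulled inside, giving
\[
\spvek{x(t)}{u(t)}\in\dom(A\&B),\qquad A\&B\spvek{x(t)}{u(t)}=z_0+\int_0^t\dot w(s)\dx[s]=w(t)=\dot x(t).
\]
Together with $C\&D\in\Lb(\dom(A\&B);\Y)$ this shows that $(x(\cdot),u(\cdot),y(\cdot))$ is a classical trajectory on $[0,T]$ with $x(0)=x_0$ and $u(0)=u_0$.

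For the regularity, $\dot x=w\in \conC^{1}([0,T];\X)$ yields $x\in \conC^{2}([0,T];\X)$, and the relation $\|w(t)\|_\V\le \|\spvek{w(t)}{v_0}\|_{\dom(A\&B)}$ gives $w\in \conC([0,T];\V)$; combined with $A\&B\spvek{x(\cdot)}{u(\cdot)}=w(\cdot)$ this is exactly the statement $\spvek{x(\cdot)}{u(\cdot)}\in \conC([0,T];\W)$ by the definition \eqref{eq:Wdef} of $\W$. The only point that requires some care is the interchange of $A\&B$ with the Bochner integral, but this follows from a standard closed-operator argument once the integrand is verified to be continuous in the graph norm of $A\&B$; beyond that, the argument is just bookkeeping.
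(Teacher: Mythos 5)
Your proof is correct, but it takes a genuinely different route from the paper's. Both arguments hinge on the same observation that $\spvek{x_0}{u_0}\in\W$ supplies admissible initial data $\bigl(A\&B\spvek{x_0}{u_0},\,\cdot\,\bigr)\in\dom(A\&B)$ for a ``differentiated'' Cauchy problem, but the paper then applies Proposition~\ref{prop:solex} \emph{twice} --- once for $(x_0,u(\cdot))$ and once for $\bigl(A\&B\spvek{x_0}{u_0},\dot u(\cdot)\bigr)$, with a $\Wkp{3,1}$ input interpolating the data $u_0,u_1$ --- identifies the second state trajectory with $\dot x(\cdot)$ by invoking \cite[Thm.~3.8.2]{Staf05}, and delegates the $\conC([0,T];\W)$-regularity to Lemma~\ref{lem:smoothsol}. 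You instead solve only the differentiated problem (with constant input $v_0$) and \emph{reconstruct} $x(\cdot)$ by integrating $w(\cdot)$, verifying the trajectory property directly by commuting the closed operator $A\&B$ with the Bochner integral (Hille's theorem, legitimate here since $s\mapsto\spvek{w(s)}{v_0}$ is graph-norm continuous); the $\W$-continuity then falls out of the construction because $A\&B\spvek{x(\cdot)}{u(\cdot)}=w(\cdot)$. Your version is more self-contained --- one application of Proposition~\ref{prop:solex}, no external uniqueness theorem, no appeal to Lemma~\ref{lem:smoothsol} --- at the price of carrying out the closed-operator/integral interchange by hand, which is exactly what the paper's citations outsource. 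One cosmetic point: to get $w(\cdot)\in\conC([0,T];\V)$ you should apply the quotient-norm estimate to the differences, i.e.\ $\norm{w(t)-w(s)}_{\V}\le\norm{\spvek{w(t)-w(s)}{0}}_{\dom(A\&B)}$, rather than pointwise to $\spvek{w(t)}{v_0}$; this is immediate from $w(\cdot)\in\conC^{1}([0,T];\X)$ and $\dot w(\cdot)=A\&B\spvek{w(\cdot)}{v_0}$, so it is not a gap.
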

\begin{proof}
Since $\spvek{x_0}{u_0}\in \W$, there exists some $u_0\in \U$, such that 
\[\spvek{A\&B\spvek{x_0}{u_0}}{u_1}\in\dom(A\&B).\]
By simple interpolation, we obtain that there exists some  
\[u(\cdot)\in \Wkp{3,1}([0,T];\U)\;\text{ s.t.\ }u(0)=u_0\text{ and }\dot{u}(0)=u_1.\]
Then Proposition~\ref{prop:solex} implies that there exist classical trajectories $(x(\cdot),u(\cdot),y(\cdot))$, $(x_1(\cdot),\dot{u}(\cdot),y_1(\cdot))$ on $[0,T]$ with $x(0)=x_0$ and $x_1(0)=A\&B\spvek{x_0}{u_0}$. Then {\cite[Thm.~3.8.2]{Staf05}}
gives $x_1(\cdot)=\dot{x}(\cdot)$. Hence, $x(\cdot)\in\conC^2([0,T];\X)$ and, by further invoking that $u(\cdot)\in \Wkp{3,1}([0,T];\U)\subset \conC^2([0,T];\X)$, an application of Lemma~\ref{lem:smoothsol} leads \[x(\cdot)\in \conC^{1}([0,T];\V)\;\text{ and }\;\spvek{x(\cdot)}{u(\cdot)}\in \conC([0,t];\W),\] the desired result.  
\end{proof}

Next we present a~result on the approximation of generalized trajectories by smooth classical trajectories.
Hereby we consider the concept of `mollifier sequence'.
\begin{definition}[Mollifier sequence]\label{def:molli}
    By a `mollifier sequence', we mean a~sequence $(\alpha_n)_{n\in\N}$ in $\conC^\infty(\R)$, such that, for all $n\in\N$,
\begin{enumerate}[label=(\alph{*})]
  \item $\displaystyle \int_\R \alpha_n(t)\,\dx[t] = 1$,
  \item the support of $\alpha_n $ is contained in $(-1/n,0]$,
  \item $\alpha_n(t)\ge 0 \text{ for all } t\in \R$.
\end{enumerate}
\end{definition}
Such a sequence exists, see 
\cite[p.~110]{Alt16}. Next we show that the convolution of a~generalized trajectory gives a~sequence of classical solutions converges.
The latter is, for $x(\cdot)\in\Lp{1}([0,T];\X)$, 
\begin{equation}
\begin{aligned}\label{eq:convol}
    \alpha_n\ast x:\quad [0,T]&\to\X,\\
    t&\mapsto \int_{-1/n}^t\alpha_n(\tau)x(t-\tau)\dx[\tau],
\end{aligned}    
\end{equation}
where we set $x(t)=0$ for $t>T$.
\begin{lemma}\label{lem:mollconv}
    Let $\sbvek{A\& B}{C\& D}$ be a~system node  on $(\X,\U,\Y)$, let $T\in\R_{>0}$, and assume that $(x(\cdot),u(\cdot),y(\cdot))$ is a generalized trajectory of \eqref{eq:ODEnode} on $[0,T]$. Consider the 
convolutions $x_n:=\alpha_n\ast x:[0,T]\to\X$,
$u_n:=\alpha_n\ast u:[0,T]\to\U$, 
$y_n:=\alpha_n\ast y:[0,T]\to\Y$. Then the following holds:
  \begin{enumerate}[label=(\alph{*})]
 \item\label{lem:mollconv1} For all 
$n\in\N$, $(x_n(\cdot),u_n(\cdot),y_n(\cdot))$ is a~classical and infinitely often differentiable trajectory of \eqref{eq:ODEnode} on $[0,T-1/n]$
\item\label{lem:mollconv2} For all $\varepsilon>0$, the restriction of $\big((x_n(\cdot),u_n(\cdot),y_n(\cdot))\big)$ to $[0,T-\varepsilon]$ converges to the restriction of $(x(\cdot),u(\cdot),y(\cdot))$ to  $[0,T-\varepsilon]$ in the topology of $\conC([0,T-\varepsilon];\X)\times \Lp{2}([0,T-\varepsilon];\U)\times \Lp{2}([0,T-\varepsilon];\Y)$.
\item\label{lem:mollconv3} If, moreover, for some Hilbert space $\widetilde{\X}\subset\X$, it holds
that $x(\cdot)\in\conC([0,T];\widetilde{\X})$, then the sequence $(x_n(t))$ converges in $\V$ to $x(t)$ for all $t\in[0,T)$.  
\end{enumerate}
\end{lemma}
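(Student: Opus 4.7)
The plan is to treat the three claims separately, with the bulk of the work concentrated in~\ref{lem:mollconv1}. The starting point is Definition~\ref{def:traj}, which supplies a sequence $(x^k(\cdot),u^k(\cdot),y^k(\cdot))_{k\in\N}$ of classical trajectories of \eqref{eq:ODEnode} on $[0,T]$ converging to $(x(\cdot),u(\cdot),y(\cdot))$ in $\conC([0,T];\X)\times \Lp{2}([0,T];\U)\times \Lp{2}([0,T];\Y)$. The strategy is to mollify each classical approximant, identify the mollified triples as smooth classical trajectories on $[0,T-1/n]$, and then pass to the limit $k\to\infty$ to deduce the same property for $(x_n,u_n,y_n)$.

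For~\ref{lem:mollconv1}, smoothness is a standard Bochner-integration fact: convolution with $\alpha_n\in\conC^\infty(\R)$ produces infinitely differentiable functions on $[0,T-1/n]$, the threshold arising because $\supp(\alpha_n)\subset(-1/n,0]$ forces the value at $t$ to depend only on the argument of the convolved function on $[t,t+1/n]$; derivatives are obtained by transferring them onto $\alpha_n$. For each fixed $t\in[0,T-1/n]$, the integrand $\tau\mapsto \alpha_n(\tau)\spvek{x^k(t-\tau)}{u^k(t-\tau)}$ takes values in $\dom(A\&B)$ with image $\tau\mapsto \alpha_n(\tau)\spvek{\dot x^k(t-\tau)}{y^k(t-\tau)}$ under $\sbvek{A\&B}{C\&D}$ depending continuously on $\tau$, so the Bochner integral theorem for closed operators lets $\sbvek{A\&B}{C\&D}$ be pulled through the convolution. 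Thus $(\alpha_n\ast x^k,\alpha_n\ast u^k,\alpha_n\ast y^k)$ is already a smooth classical trajectory of \eqref{eq:ODEnode} on $[0,T-1/n]$. Passing to $k\to\infty$, uniform convergence $x^k\to x$ on $[0,T]$, combined with Cauchy--Schwarz bounds of the form
\[
\norm{(\alpha_n\ast u^k)(t)-(\alpha_n\ast u)(t)}_{\U}\leq \norm{\alpha_n}_{\Lp{2}(\R)}\,\norm{u^k-u}_{\Lp{2}([0,T];\U)}
\]
(and analogously for $y_n^k$ and, via $\dot x_n^k=(\dot\alpha_n)\ast x^k$, also for $\dot x_n^k$), gives uniform convergence on $[0,T-1/n]$ to $(x_n,\dot x_n,u_n,y_n)$. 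Closedness of $\sbvek{A\&B}{C\&D}$ then carries the identity \eqref{eq:ODEnode} through the limit, completing~\ref{lem:mollconv1}.

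Part~\ref{lem:mollconv2} is the standard Bochner-valued mollifier approximation theorem, applied separately to $x\in \conC([0,T];\X)$ (uniform convergence on compact subintervals) and to $u\in \Lp{2}([0,T];\U)$, $y\in \Lp{2}([0,T];\Y)$ ($\Lp{2}$-convergence). Restricting to $[0,T-\varepsilon]$ removes the interference of the zero extension near $T$; I will merely quote the relevant estimates from \cite[Chap.~4]{Alt16}. For~\ref{lem:mollconv3}, continuity of $x(\cdot):[0,T]\to\widetilde{\X}$ at $t\in[0,T)$ together with $\int\alpha_n=1$ and the nonnegativity of $\alpha_n$ yield, for all $n$ with $1/n<T-t$,
\[
\norm{x_n(t)-x(t)}_{\widetilde{\X}}\leq \int_{-1/n}^{0}\alpha_n(\tau)\,\norm{x(t-\tau)-x(t)}_{\widetilde{\X}}\dx[\tau],
\]
and the right-hand side tends to $0$ because the integrand vanishes uniformly in $\tau\in(-1/n,0]$ as $n\to\infty$; the convergence takes place in $\widetilde{\X}$, which is the ambient space intended in the application to Proposition~\ref{prop:domScont} (with $\widetilde{\X}=\dom(\Sc)$).

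The principal obstacle is the twofold use of closedness in~\ref{lem:mollconv1}: first to justify the interchange between $\sbvek{A\&B}{C\&D}$ and the Bochner integral defining $(\alpha_n\ast x^k,\alpha_n\ast u^k)$ (ensuring values stay in $\dom(A\&B)$), and then to transport the resulting identity through the limit $k\to\infty$, where the available convergence is only uniform in $\X$ plus $\Lp{2}$ in $\U$ and $\Y$. Once these two commutation steps are in place, the remainder of the lemma reduces to classical mollifier estimates.
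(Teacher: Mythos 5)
Your proposal is correct and follows essentially the same route as the paper: both mollify the defining sequence of classical approximants, identify the mollified approximants as smooth classical trajectories on $[0,T-1/n]$, and pass to the limit using the standard convolution convergence results. The only cosmetic difference is that the paper concludes by observing that the limit is a smooth generalized trajectory and hence classical, whereas you make the underlying mechanism explicit by pushing the pointwise identity through the limit via closedness of $\sbvek{A\&B}{C\&D}$ (using the uniform convergence of $u_n^k$, $y_n^k$, $\dot x_n^k$ supplied by Cauchy--Schwarz) -- which is the substance behind the paper's final step in any case.
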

In the following, we present some convergence results for convolutions from \cite{Brezis2011}, which are originally stated for the scalar-valued case. A careful inspection of the proofs, however, shows that they also remain valid in the Hilbert space–valued setting.
\begin{proof}\
\begin{enumerate}[label=(\alph{*})]
\item Smoothness of $x_n(\cdot)$, $u_n(\cdot)$ and $y_n(\cdot)$ is a~classical result, see e.g.\ \cite[Sec.~4.14]{Alt16}.
Since we have a~generalized trajectory, there exists a~sequence $(\tilde{x}_m(\cdot),\tilde{u}_m(\cdot),\tilde{y}_m(\cdot))$ which converges to $(x(\cdot),u(\cdot),y(\cdot))$ in the topology of $\conC([0,T];\X)\times \Lp{2}([0,T];\U)\times \Lp{2}([0,T];\Y)$. A~differentiation of the convolution integrals yields that, for all $n,m\in\N$, 
$(\alpha_n\ast\tilde{x}_m(\cdot),\alpha_n\ast\tilde{u}_m(\cdot),\alpha_n\ast\tilde{y}_m(\cdot))$ is a~classical trajectory of 
of \eqref{eq:ODEnode} on $[0,T-1/n]$. Further, by \cite[Prop~4.21 \& Thm.~4.22]{Brezis2011}, we have for all $n\in\N$ that the limit $m\to\infty$
of
\[(\alpha_n\ast\tilde{x}_m(\cdot),\alpha_n\ast\tilde{u}_m(\cdot),\alpha_n\ast\tilde{y}_m(\cdot))\]
in the topology of $\conC([0,T];\X)\times \Lp{2}([0,T];\U)\times \Lp{2}([0,T];\Y)$ is given by $(x_n(\cdot),u_n(\cdot),y_n(\cdot))$.
Hence, for all $n\in\N$, $(x_n(\cdot),u_n(\cdot),y_n(\cdot))$ is a~generalized trajectory 
of \eqref{eq:ODEnode} on $[0,T-1/n]$. Now invoking smoothness of $(x_n(\cdot),u_n(\cdot),y_n(\cdot))$ we even have that  
it is a~classical trajectory 
of \eqref{eq:ODEnode} on $[0,T-1/n]$.
\item This follows by an application of \cite[Prop~4.21 \& Thm.~4.22]{Brezis2011}.
\item This again follows from \cite[Prop~4.21]{Brezis2011}.
\end{enumerate}
 \end{proof}

\end{appendices}

\section*{Statements and Declarations}

\subsection*{Funding}
The author gratefully acknowledges funding from the Deutsche Forschungsgemeinschaft (DFG, German Research Foundation), Project-ID 531152215, CRC 1701 ``Port-Hamiltonian Systems''.

\subsection*{Author Contributions}
All authors contributed equally to this work.

\subsection*{Competing Interests} The author reports no conflicts of interest.

%%===========================================================================================%%
%% If you are submitting to one of the Nature Portfolio journals, using the eJP submission   %%
%% system, please include the references within the manuscript file itself. You may do this  %%
%% by copying the reference list from your .bbl file, paste it into the main manuscript .tex %%
%% file, and delete the associated \verb+\bibliography+ commands.                            %%

\bibliography{refs}

\end{document}